\newenvironment{sproof}{%
  \proof}{\endproof}
\newtheorem{theorem}{Theorem}
\newtheorem*{theorem*}{Theorem}
\newtheorem{lemma}{Lemma}
\newtheorem{proposition}{Proposition}
\newtheorem*{proposition*}{Proposition}
\newtheorem{definition}{Definition}
\newtheorem{corollaire}{Corollary}
\newtheorem{property}{Property}
\newtheorem*{corollaire*}{Corollary}
\newcommand{\footremember}[2]{%
    \footnote{#2}
    \newcounter{#1}
    \setcounter{#1}{\value{footnote}}%
}
\newcommand{\footrecall}[1]{%
    \footnotemark[\value{#1}]%
} 
\newcommand{\off}[1]{}
\newcommand{\R}{\mathbb{R}}
\newcommand{\N}{\mathbb{N}}
\newcommand{\bigO}{\mathcal{O}}
\def\argmin{\textup{argmin}\,}
\title{Study of the behaviour of Nesterov Accelerated Gradient in a non convex setting: the strongly quasar convex case}
\author{J. Hermant\footremember{Bdx}{Univ. Bordeaux, Bordeaux INP, CNRS, IMB, UMR 5251, F-33400 Talence, France}\footremember{corr}{Corresponding author: julien.hermant@math.u-bordeaux.fr} \and J.-F. Aujol\footrecall{Bdx} \and C. Dossal\footremember{Tls1}{IMT, Univ. Toulouse, INSA Toulouse, Toulouse, France} \and A. Rondepierre\footrecall{Tls1}}
\date{}
\begin{document}

\maketitle
\begin{abstract}
    We study the convergence of Nesterov Accelerated Gradient (NAG) minimization algorithm applied to a class of non convex functions called strongly quasar convex functions.
    We show that NAG can achieve an accelerated convergence speed at the cost of a lower curvature assumption. We provide a continuous analysis through high resolution ODEs, where we show that despite that negative friction may appear, the solution of the system achieves accelerated rate of convergence to the minimum. Finally, we identify the key geometrical property that, if dropped, theoretically cancels the acceleration phenomenon. 
\end{abstract}
\paragraph{Keywords:} Non-convex optimization, first order algorithms, strongly quasar convex, convergence rates, geometrical properties.
\section{Introduction}
We are interested in the following unconstrained minimization problem:
\begin{equation}\label{P}\tag{P}
    \min_{x \in \mathbb{R}^d} F(x):= F^\ast
\end{equation}
where $F: \mathbb{R}^d \to \mathbb{R} \cup \{ +\infty \}$ is such that $\arg \min_{x \in \mathbb{R}^d} F(x)$ is non empty. When it comes to minimize high dimensional functions, \textit{first order algorithms} such as gradient descent are popular, because of the relative cheapness of the iterations. These algorithms only make use of the function and its gradient, which are more computationally tractable than the Hessian that may be used by second order algorithms. We will study a specific type of first order algorithms called Nesterov Accelerated Gradient algorithms (NAG), which are variants of the gradient descent including an inertia mechanism. There exist classes of functions such that NAG algorithms allow to converge to the minimum $F^\ast$ with an accelerated speed of convergence compared to gradient descent. Considering convex functions, the convergence bound of the seminal version of (NAG) is $\bigO\left( \frac{1}{n^2} \right)$ \cite{Nesterov1983AMF}, improving over the $\bigO\left( \frac{1}{n} \right)$ rate of the gradient descent. When $F$ is $\mu$-strongly convex and $L$-smooth (\textit{i.e.} $C^1$ with a $L$-Lipschitz gradient), another version of NAG  \cite{nesterovbook} leads to an analogous acceleration phenomenon as we upgrade a $(1-\frac{\mu}{L})$ linear convergence rate into $(1-\sqrt{\frac{\mu}{L}})$, where $\frac{\mu}{L} \leqslant 1$ may be extremely low for high dimension functions. In recent applications, the problem of minimizing \textit{non convex} functions has become crucial, \textit{e.g.} in the field of machine learning. However, it is also known that the lack of regularity may cancel the acceleration of NAG. For example it has been shown that gradient descent is optimal among first order algorithms for the class of functions with a Lipschitz gradient \cite{lowerboundI}, see also \cite{PLlowerbound, RsigGdOptimal} for similar results on other classes of functions. This means that for some classes of functions, NAG converges theoretically not faster than GD. Convexity is however non necessary to get acceleration over gradient descent: for example \cite{convexguilty, momentumsaddle} show that a modified version of NAG accelerates over gradient descent for the class of functions with Lipschitz gradient and Lipschitz Hessian. Among convexity relaxations, the \textit{quasar convex} (originally weak-quasi convex \cite{quasarconvexorigin}) functions have gained a rising interest \cite{hinder2023nearoptimal,wang2023continuized,fu2023accelerated, quasarconvexinexxact, gower2021sgd}. These functions are defined by the following inequality:
\begin{equation}\label{quas conv intro}
           F^\ast \geqslant F(x) +  \frac{1}{\gamma} \langle\nabla F(x),x^\ast-x \rangle + \frac{\mu}{2} \lVert x^\ast - x \rVert^2
    \end{equation} with $x^\ast$ a minimizer, $x$ an arbitrary point belonging to $\mathbb{R}^d$, $\gamma \in (0,1],~ \mu \geqslant 0$. The case $\mu > 0$ defines \textit{strongly quasar convex} functions, which are the main focus of our work. Strongly quasar convex can be highly oscillating, while keeping some properties of strongly convex functions that are favorable from an optimization point of view. This induces that theses functions are interesting in order to study NAG's behaviour in a non convex setting. Moreover, it has been empirically observed  that the loss function of some neural networks has a quasar-convex like structure \cite{sgdQuasConvNeur}. 
\paragraph{Related work}
Convergence of momentum first order algorithms has attracted some interest. In the $L$-smooth and ($1,\mu)-$strongly quasar convex (\ref{quas conv intro}) setting, \cite{rungekutta} uses a Runge Kutta discretization procedure of the Heavy Ball ordinary differential equation to get a $ 1-(\frac{\mu}{L})^\gamma$ convergence rate, $ \gamma = \frac{s+3}{2(s+1)}$ where $s$ is such that the $s$th derivative is Lipschitz. This means that in this case the function is needed to be high order smooth to be close to the $1-\sqrt{\frac{\mu}{L}}$ accelerated rate. The authors of \cite{wang2023continuized} apply the \textit{continuized acceleration framework} \cite{even2021continuized} to $(\gamma,\mu)$-strongly quasar convex functions, which consists in a continuous stochastic differential equation approach leading to a stochastic version of the Nesterov accelerated Gradient. This stochastic version achieves an accelerated rate $1-\gamma\sqrt{\frac{\mu}{L}}$ in expectation, and convergence of iterations with high probability can be deduced. Also, \cite{quasarconvexold1,quasarconvexold2} show accelerated rates in term of number of iterations, but each iteration relies on a low dimensional sub-optimization problem to solve. In \cite{hinder2023nearoptimal}, the cost of this sub-problem (binary line search) is explicitly computed, and the authors show that their algorithm achieves an almost optimal rate (up to a log factor) for $L$ smooth $(\gamma,\mu)$-strongly quasar convex function in term of gradient and function evaluations. However, it is not clear whether NAG needs to solve a low dimensional sub-optimization problem at each iteration in order to achieve accelerated convergence.  Finally, in an arXiv preprint published after the first version of our work, \cite{gupta2024nesterov} address some similar questions. Using similar ideas as in \cite{hyppo}, they show convergence of smooth strongly quasar convex functions without assuming that the functions have a unique minimizer. However, their results hold in the restricted case $\gamma = 1$.

Among recent interpretations of NAG (\textit{e.g.} \cite{bubeck2015geometric, karimi2020linear}), an important one is the ODE framework, in which these algorithms are seen as a discretization of an ordinary differential equation \cite{POLYAK19641,suboydcandes}. One can show similar convergence results for the algorithms and for the solutions of these equations, mainly via Lyapunov approaches. These continuous analogs give interesting insights: it enables physical interpretation, convergence in this setting may be proved with less technical considerations than for the discrete counterpart, and importantly the strategies of proof may be adapted when we want to transpose results in the discrete setting, aiming to show algorithms convergence. This has been extensively used in a convex setting, see $\textit{e.g.}$ \cite{suboydcandes,siegel2021accelerated, aujdossrondPL,hyppo, attouch2020firstorder,shi2018understanding,nesterov1983linear}. However to the best of our knowledge, it has not been used yet in a quasar convex setting. In this paper, we will make a step in this direction using this framework to analyze convergence of strongly quasar convex functions.
\paragraph{Contributions}

We provide the following contributions.
\begin{enumerate}
    \item We identify a curvature assumption such that NAG provides an accelerated rate of convergence for the class of smooth strongly quasar convex functions. We extend this result to composite non differentiable functions, in which case specific difficulties appear. 
    \item We provide a continuous analysis of the high resolution ODE associated to NAG in the strongly quasar convex setting. We highlight limitations of the ODE framework as a tool to tune optimization algorithms when it comes to non convex optimization.
    \item By creating a link between strongly quasar convex functions and Polyak-{\L}ojasiewicz functions, we identify a key geometrical property that allows to achieve acceleration with NAG. Also, we present new properties of strongly quasar convex functions, together with properties considering more general non convex functions.
\end{enumerate}
\paragraph{Organisation of paper}
In section 2 we define the class of functions we study in this paper, and give a brief overview of their potential non convex behaviour. In section 3 we introduce the convergence of NAG applied to strongly quasar convex functions. In section 4 we present a continuous analysis of NAG in a strongly quasar convex setting. In Section 5 we present new properties of strongly quasar convex functions. In section 6 we illustrate our work with some numerical experiments. 
\section{Preliminaries}
Throughout the paper, we will mainly consider differentiable functions $F: \mathbb{R}^d \to \mathbb{R}$. 
A function $F: \mathbb{R}^d \to \mathbb{R}$ is said $L$-smooth for some $L>0$ if $F$ is $C^1$ and has a $L$-Lipschitz gradient: 
$$\forall  (x,y) \in \mathbb{R}^d\times \R^d,~\lVert \nabla F(x) - \nabla F(y) \rVert \leqslant L \lVert x-y \rVert.$$
Note that $F$ is $L$-smooth if and only if $F$ admits lower and upper quadratic bounds parameterized by $L$ at every point. More precisely:
\begin{property}
Let $F:\mathbb{R}^d\rightarrow \mathbb{R}$. $F$ is $L$-smooth for some $L>0$ if and only if it verifies for all $x,y$ in $\mathbb{R}^d$:
\begin{equation}\label{def L-smooth}
F(x) + \langle \nabla F(x),y-x \rangle - \frac{L}{2}\lVert x-y \rVert^2 \leqslant F(y) \leqslant  F(x) + \langle \nabla F(x),y-x \rangle + \frac{L}{2}\lVert x-y \rVert^2.
\end{equation}
\end{property}


\begin{proof}
    The fact that $L$-smooth implies the inequality \eqref{def L-smooth} is just the well-known descent lemma, see for example \cite{nesterovbook}. The converse is also well known when dealing with convex functions. Less trivially, it still holds in the non convex case. To the best of our knowledge the equivalence is not proved in the literature, but a proof has been proposed online\footnote{Characterization of {L}ipschitz derivative, \url{https://math.stackexchange.com/q/4264948} (version: 2021-10-01), Mathematics Stack Exchange} and is adapted here to our context.
    \\
    Assume first that $F$ verify (\ref{def L-smooth}) with $L=1$.
      Let $d\in \mathbb{R}^d$. Evaluating (\ref{def L-smooth}) at 4 different pairs of points:
    \begin{equation}
          F(y+d) - F(x) - \langle \nabla F(x),y+d-x\rangle \leqslant \frac{1}{2}\lVert y+d-x \rVert^2
    \end{equation}
    \begin{equation}
        F(x-d) - F(y) - \langle \nabla F(y),x-d-y\rangle \leqslant \frac{1}{2}\lVert y+d-x \rVert^2
    \end{equation}
    \begin{equation}
        -(F(y+d)-F(y) - \langle \nabla F(y),d \rangle)\leqslant \frac{1}{2}\lVert d \rVert^2
    \end{equation}
    \begin{equation}
             -(F(x-d)-F(x) - \langle \nabla F(x),-d \rangle)\leqslant \frac{1}{2}\lVert d \rVert^2.
    \end{equation}
    Adding all this inequalities yields:
\begin{equation}
    \langle \nabla F(x) - \nabla F(y),x-y-2d \rangle \leqslant \lVert y + d - x \rVert^2 + \lVert d \rVert ^2
\end{equation}
Set $g:= \nabla F(x) - \nabla F(y)$ and choose $d$ such that $x-y-2d = g$. Then $ d = \frac{1}{2}(x-y-g)$ and $y+d-x = -\frac{1}{2}(x-y+g)$. This results in:
\begin{equation}
    \lVert g \rVert^2 \leqslant \frac{1}{4}\lVert x-y+g \rVert^2 + \frac{1}{4} \lVert x-y-g \rVert^2 = \frac{1}{2}\lVert x-y \rVert^2 + \frac{1}{2}\lVert g \rVert^2
\end{equation}
which implies $F$ is with $1$-Lipschitz gradient.
It is straightforward to extend it to functions verifying (\ref{def L-smooth}) with arbitrary $L_0 \geqslant 0$, as it implies $\frac{F}{L_0}$ verify  (\ref{def L-smooth}) with $L=1$, inducing $\frac{\nabla F}{L_0}$ is $1$-Lipschitz, thus inducing the result.
\end{proof}
This provides quadratic lower and upper bounds on the function, both parameterized by the constant $L$. However, we will later consider different parameterizations for these bounds. To do so we now introduce the class of \textit{$(a,L)$-curvatured functions}. 
\begin{definition}\label{def:a_l_curvature}
Let $F:\R^d\rightarrow\R$ be a differentiable function and $(a,L)$ two real constants with $L>0$ and $a\leqslant L$. The function $F$ is said to be $(a,L)$-curvatured if it satisfies for all $x,y \in \mathbb{R}^d$:
\begin{equation}\label{curvature functions}
    F(x) + \langle \nabla F(x),y-x \rangle + \frac{a}{2}\lVert x-y \rVert^2 \leqslant F(y) \leqslant  F(x) + \langle \nabla F(x),y-x \rangle + \frac{L}{2}\lVert x-y \rVert^2.
\end{equation}
\end{definition}
This is a generalisation of $L$-smoothness as we allow for different characterizations of lower curvature. In particular, observe that a $(-L,L)$-curvatured function is exactly a $L$-smooth function, a $(0,L)$-curvatured function is a $L$-smooth convex function,  and a ($\mu,L$)-curvatured function with $\mu>0$ is a $L$-smooth and $\mu$-strongly convex function.\\
In this paper we will also consider the subclass of $C^2$ functions having a $\rho$-Lipschitz Hessian (for some $\rho \geqslant 0$), \textit{i.e.} functions that verify the following:
\begin{equation}
\forall (x,y)\in\mathbb{R}^d\times \R^d,~\left\vert \left\vert \left\vert \nabla^2 F(x) - \nabla^2 F(y)\right\vert \right\vert \right\vert\leqslant \rho \lVert x-y \rVert.
\end{equation}
where for the matrix norm $\vert \vert \vert . \vert \vert \vert$, we choose the norm induced by the euclidean norm on $\mathbb{R}^d$.
\subsection{Strong convexity and the question of acceleration}\label{section strong conv et question of acceleration}
In the paper, we study a relaxation of a well known property called strong convexity, which we recall below. Note that as we mainly studied the differentiable function case, we state the definitions in this case. In Section \ref{section non diff} we will consider possibly non differentiable functions.    
\begin{definition}
 Let $F:\R^d\rightarrow \R$ be a differentiable function. The function $F$ is said $\mu$-strongly convex for some $\mu > 0$ if it satisfies:
\begin{equation}\label{def strongly conv}
\forall (x,y)\in \mathbb{R}^d\times \R^d,~    F(x) + \langle \nabla F(x),y-x \rangle + \frac{\mu}{2}\lVert x-y \rVert^2 \leqslant F(y).
\end{equation}
\end{definition}
This is a stronger hypothesis than convexity (which corresponds to the above inequality with $\mu = 0$). Strongly convex functions are not only lower bounded by linear approximations, but by quadratic approximations. In particular, these functions do verify the $\mu$-quadratic growth hypothesis.
\begin{definition}\label{definition PL}
Let $F:\R^d\rightarrow \R$ such that $X^*=\argmin F\neq \emptyset$ and $F^*=\min F$. The function $F$ has $\mu$-quadratic growth for some $\mu>0$ if:
\begin{equation}
\forall x \in \mathbb{R}^d,~  \frac{\mu}{2}d(x,X^\ast)^2 \leqslant F(x) - F^\ast.
\end{equation}
\end{definition}
It is straightforward to see that under the strong convexity assumption, the function $F$ has a unique minimizer. The quadratic growth property ensures that, around this minimizer, the function can not become flatter than a quadratic. This gives a control about how fast the gradient can vanish to zero when approaching the minimizer, allowing to achieve linear convergence speed with gradient descent. More precisely, for $L$-smooth and $\mu$-strongly convex functions, the gradient descent generates a sequence $\{ x_n\}_{n\in \mathbb{N}}$ that yields a linear convergence $F(x_n) - F^\ast \leqslant \bigO\left((1-\frac{\mu}{L})^n\right)$. As mentioned in introduction, using other first order algorithms, this rate can be improved into a $\bigO\left((1-\sqrt{\frac{\mu}{L}})^n\right)$ rate. Notice that we necessarily have $\mu \leqslant L$. Moreover, in large dimension, the ratio $\frac{\mu}{L}$ (which represents the inverse of the conditioning of the function to minimize) can be very small, this is actually a significant improvement.

\paragraph{Acceleration for relaxations of strong convexity}
In practice however few problems really suit the strong convexity hypothesis. This motivates to consider weaker assumptions. $\mu$-strong convexity provides a lower bound on the function, while $L$-smoothness provides an upper bound. Many relaxations of $\mu$-strongly convex and $L$-smooth functions generalize this fact by defining another pair of assumptions, a lower one parameterized by a constant $\mu \geqslant 0$ and an upper one parameterized by a constant $L\geqslant 0$. In many cases, the property $\mu \leqslant L$ and the convergence rate associated to gradient descent in $\bigO\left((1-\frac{\mu}{L})^n\right)$ remains true. This enables to generalize, in these cases, the characterization of \textit{acceleration} as the exchange of $\frac{\mu}{L}$ for $\sqrt{\frac{\mu}{L}}$. See \cite{guilleescuret2020study} for an insightful discussion about lower and upper conditions. 

An example of such relaxation is the class of aforementioned $L$-smooth functions having a $\mu$-quadratic growth. However this is a too weak relaxation, since we lose almost all control over the function. In particular critical points can be non (global) minimum.

This is why we consider slightly stronger hypotheses, such as the class of $\mu$-Polyak-\L ojasiewicz functions ($\mu$-PL), which is a non-convex relaxation of the class of $\mu$-strongly functions:
\begin{definition}\label{PL}
Let $F:\R^d\rightarrow \R$ be a differentiable function with $X^*=\arg\min~F\neq\emptyset$ and $F^*=\min~F$. The function $F$ is $\mu$-Polyak-\L{}ojasiewicz ($\mu$-PL) for some $\mu>0$, if:
\begin{align}
 \forall x\in \mathbb{R}^d,~   F(x)-F^\ast \leqslant \frac{1}{2\mu}\lVert \nabla F(x) \rVert^2.
\end{align}
\end{definition}
Note that $\mu$-PL functions have a $\mu$-quadratic growth \cite{garrigos2023square}. The \L ojasiewicz property \cite{Loja63,Loja93} is a key tool in the mathematical analysis of continuous and discrete dynamical systems, initially introduced to prove the convergence of the trajectories for the gradient flow of analytic functions. The Polyak-\L ojasiewicz property is nothing more than the global version of the \L ojasiewicz property with an exponent $\frac{1}{2}$, and appears in important practical problems \cite{LossLandscape, peyreResnet}.

It has been shown in \cite{karimi2020linear} that gradient descent ensures, for $L$-smooth functions satisfying $\mu$-PL property, a linear convergence: $F(x_n) - F^\ast \leqslant (1-\frac{\mu}{L})^n(F(x_0)-F^\ast)$. Importantly, in \cite{PLlowerbound} is computed a lower bound of the number of gradient queries needed to achieve, with a first order method, a point $\hat{x}$ such that $F(\hat{x}) - F^\ast \leqslant \varepsilon (F(x_0) - F^\ast)$ for some $\varepsilon > 0$. They show that for every first order method, there exists a function such that this number of gradient queries is of the order $\frac{L}{\mu}\log\left(\frac{1}{\varepsilon}\right)$. This bound is achieved, up to a constant, by gradient descent. Strikingly, it induces that for these functions, the Nesterov accelerated gradient algorithms are prevented to achieve an accelerated linear convergence of the form $F(x_n) - F^\ast \leqslant K_1(1-K_2 \sqrt{\frac{\mu}{L}})^n(F(x_0)-F^\ast)$, where $K_1>0$ and $0 < K_2\le1$ constants independent of $\mu$ and $L$. This motivates to use stronger assumptions in order to achieve this acceleration, and leads us to consider a stronger hypothesis, namely the strong quasar convexity, which is another relaxation of strong convexity.

\subsection{A relaxation of strong convexity: strong quasar convexity}\label{quasar convex sction 2}
In this section we define the notion of strong and non-strong quasar convexity. We include a short discussion about its geometrical properties, which will be further developed in section \ref{section geometrical considerations}.
\begin{definition}\label{definition (strongly) quasar convex}
    Let $F:\R^d\rightarrow \R$ a differentiable function such that $X^*=\argmin F\neq \emptyset$ and $F^*=\min~F$. Let $x^*$ be a minimizer of $F$ and $\gamma \in (0,1]$, $\mu > 0$. The function $F$ is said $\gamma$-quasar convex with respect to $x^\ast$ if it satisfies
     \begin{equation}\label{QC}
      \forall x\in \R^d,~   F^\ast \geqslant F(x) +  \frac{1}{\gamma} \langle\nabla F(x),x^\ast-x \rangle,
    \end{equation}
    and $(\gamma,\mu)$-strongly quasar convex with respect to $x^\ast$ if:
    \begin{equation}\label{ineq strongly quasar convex}
           \forall x\in \R^d,~      F^\ast \geqslant F(x) +  \frac{1}{\gamma} \langle\nabla F(x),x^\ast-x \rangle + \frac{\mu}{2} \lVert x^\ast - x \rVert^2.
    \end{equation}
\end{definition}
We refer to any minimizer $x^*$ at which \eqref{QC} holds, as a quasar-convex point of $F$. The class of (strongly) quasar convex functions was first introduced in 2017 by \cite{quasarconvexorigin} (with $\gamma > 0$) where the authors refer to it as {\it weak quasi-convexity}. It was then implicitly re-used by \cite{suboydcandes} and \cite{aujol2019optimal,apidopoulos2021convergence} as a {\it flatness condition} (with $\gamma \geqslant 1$), and revisited more recently in \cite{hinder2023nearoptimal} where the quasar convexity name was introduced. A nice property of this class of functions is that any critical point of (strongly) quasar convex function $F$ is a global minimizer of $F$.

Moreover, the set of minimizers $X^*$ of a quasar convex function has some strong regularity: it is a star convex set i.e. there exists $x^*\in X^*$ such that:
\begin{equation}
\forall x\in X^*,~\forall t\in[0,1],~tx^*+(1-t)x \in X^*,\label{def:starconvex}
\end{equation}
and is reduced to a single for strongly quasar convex functions, see \cite[Appendix D, Observations 3 and 4]{hinder2023nearoptimal}.

Lastly, observe that the Polyak-\L ojasiewicz and the quadratic growth properties can be seen as relaxations of strong quasar convexity, as stated by the following result: 
\begin{proposition}\label{SQC implis PL & QG}
   Let $F:\R^d\rightarrow \R$ be a $(\gamma,\mu)$-strongly quasar convex function for some $(\gamma,\mu) \in (0,1] \times \mathbb{R}_+$ and $x^*$ its minimizer. Let $F^*=\min~F$. Then:
    \begin{enumerate}

        \item $F$ is $\mu \gamma^2$-PL, \textit{i.e.}
        \begin{align}
          \forall x\in \R^d,~  \frac{1}{2\gamma^2 \mu}\lVert \nabla F(x)\rVert^2 \geqslant F(x) - F^\ast.
        \end{align}
        \item \cite[Corollary 1]{hinder2023nearoptimal} $F$ has a $\frac{\gamma \mu}{2-\gamma}$-quadratic growth, \textit{i.e}
        \begin{align}
            \forall x\in \R^d,~ F(x) - F^\ast \geqslant \frac{\gamma \mu}{2(2-\gamma)}\lVert x-x^\ast \rVert^2.
        \end{align}
    \end{enumerate}
\end{proposition}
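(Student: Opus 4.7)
The plan is to treat the two items separately, starting in each case from the defining strong quasar convexity inequality \eqref{ineq strongly quasar convex}, and noting that $\nabla F(x^*)=0$ since $x^*$ is a global minimizer of the differentiable function $F$.

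For the PL bound in item (1), I would rearrange \eqref{ineq strongly quasar convex} into the upper bound $F(x)-F^*\leq \tfrac{1}{\gamma}\langle \nabla F(x),x-x^*\rangle - \tfrac{\mu}{2}\|x-x^*\|^2$ and then control the inner product via a weighted Young's inequality of the form $\langle u,v\rangle\leq \tfrac{1}{2\beta}\|u\|^2+\tfrac{\beta}{2}\|v\|^2$ with $u=\nabla F(x)$, $v=x-x^*$. Choosing the weight $\beta=\gamma\mu$ so that the resulting quadratic term in $\|x-x^*\|^2$ cancels exactly against $-\tfrac{\mu}{2}\|x-x^*\|^2$ forces the coefficient in front of $\|\nabla F(x)\|^2$ to equal $\tfrac{1}{2\gamma^2\mu}$, giving the claimed PL inequality. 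This step is purely algebraic and presents no obstacle.

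For the quadratic growth in item (2), a direct rearrangement of \eqref{ineq strongly quasar convex} only yields an upper bound on $F(x)-F^*$, and using the PL inequality from item (1) as a black box would produce at most the weaker constant $\mu\gamma^2$, which is strictly smaller than $\tfrac{\gamma\mu}{2-\gamma}$ when $\gamma<1$. To reach the sharper constant I would reduce to a one-dimensional problem along the segment from $x^*$ to $x$: setting $y(t):=x^*+t(x-x^*)$ and $h(t):=F(y(t))-F^*$, the identities $\nabla F(y(t))\cdot(x^*-y(t))=-t\,h'(t)$ and $\|x^*-y(t)\|^2=t^2\|x-x^*\|^2$ transform \eqref{ineq strongly quasar convex} at the point $y(t)$ into the scalar differential inequality
$$\tfrac{t}{\gamma}h'(t)-h(t)\geq \tfrac{\mu t^2}{2}\|x-x^*\|^2,\qquad t\in(0,1].$$
The integrating factor $\gamma t^{-(\gamma+1)}$ turns the left-hand side into $\tfrac{d}{dt}\bigl(h(t)t^{-\gamma}\bigr)$, so integrating from $0$ to $1$ produces the desired bound $F(x)-F^*\geq \tfrac{\gamma\mu}{2(2-\gamma)}\|x-x^*\|^2$, with the factor $\tfrac{1}{2-\gamma}$ coming from $\int_0^1 t^{1-\gamma}\,dt$.

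The only delicate point is the boundary behavior at $t=0$: one must justify that $h(t)/t^\gamma\to 0$ as $t\to 0^+$. This follows because $h(0)=0$ and $h'(0)=\nabla F(x^*)\cdot(x-x^*)=0$, so $h(t)=o(t)$, and since $\gamma\leq 1$ one has $\varepsilon^\gamma\geq \varepsilon$ on $(0,1]$, which forces $h(\varepsilon)/\varepsilon^\gamma\to 0$. Once this boundary term is controlled, the rest of the argument is a routine integration.
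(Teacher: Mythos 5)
Your argument is correct in both parts, and it is worth distinguishing how it relates to what the paper actually does. For item (1) the paper completes the square, writing $\frac{1}{2}\bigl\lVert \frac{1}{\gamma\sqrt{\mu}}\nabla F(x)+\sqrt{\mu}(x^\ast-x)\bigr\rVert^2$ and discarding this nonnegative term from the strong quasar convexity inequality; your weighted Young's inequality with $\beta=\gamma\mu$ is the exact same computation in a different guise (Young with the optimal weight \emph{is} completing the square), so there is nothing new there. For item (2) the paper gives no proof at all — it simply cites Corollary 1 of Hinder et al. — whereas you supply a self-contained derivation. Your one-dimensional reduction $h(t)=F(x^\ast+t(x-x^\ast))-F^\ast$, the differential inequality $\frac{t}{\gamma}h'(t)-h(t)\geqslant \frac{\mu t^2}{2}\lVert x-x^\ast\rVert^2$, and the integrating factor $\gamma t^{-(\gamma+1)}$ are all sound; this is in fact the same parametrization the paper uses in its Appendix A.1 to prove average strong convexity along segments, so your argument fits naturally into the paper's toolbox. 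You are also right that routing through the PL constant $\mu\gamma^2$ would lose a factor, since $\mu\gamma^2<\frac{\gamma\mu}{2-\gamma}$ exactly when $\gamma<1$, and your treatment of the boundary term $h(\varepsilon)/\varepsilon^\gamma\to 0$ (using $h(0)=h'(0)=0$, $h\geqslant 0$ and $\gamma\leqslant 1$) is the one genuinely delicate point and is handled correctly. The only residual technicality is that $F$ is merely assumed differentiable, so $h'$ need not be continuous; to integrate the differential inequality rigorously one should observe that $t\mapsto t^{-\gamma}h(t)-\frac{\gamma\mu}{2(2-\gamma)}\lVert x-x^\ast\rVert^2\,t^{2-\gamma}$ has nonnegative derivative and is therefore nondecreasing by the mean value theorem, which gives the same conclusion without invoking the fundamental theorem of calculus.
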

\begin{figure}[ht]
    \centering
    \includegraphics[scale=0.5]{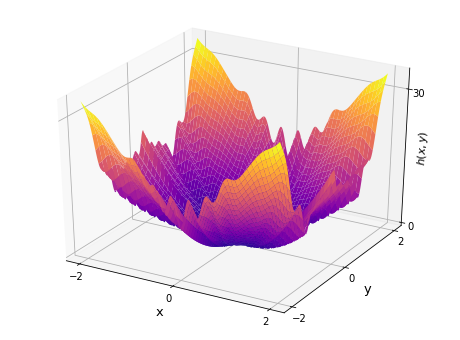}
    \includegraphics[scale=0.4]{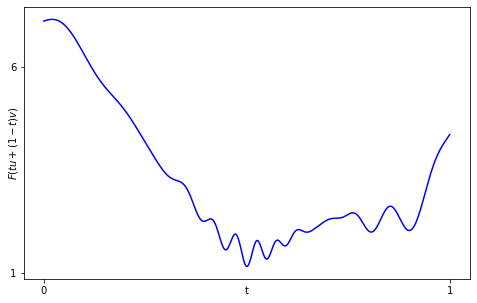}
    \caption{An example of strongly quasar convex function built as (\ref{synthetic sqc}), whose explicit expression is given in section \ref{appendix numerical}. On the left, the graph of this function. On the right, a cut of this graph along a segment of $\mathbb{R}^2$, such that the minimizer does not belong to this segment.}
    \label{fig:1}
\end{figure}
 Strongly quasar convex functions can be highly oscillating. Let us take the construction described in \cite{lee2016optimizingstarconvex,hinder2023nearoptimal} to highlight the non-convexity of strongly quasar convex functions. Let $f: \mathbb{R} \to \mathbb{R}$ such that it is $(\gamma,\mu)$-strongly quasar convex, with $f^\ast = f(0) = 0$. Let $g: S^{d-1} \to \mathbb{R}$ be an arbitrary continuous function defined on the unit circle of $\mathbb{R}^d$ such that $g \geqslant 1$. Consider 
\begin{equation}\label{synthetic sqc}
    h(x) = f(\lVert x \rVert)g\left(\frac{x}{\lVert x \rVert}\right), ~ x\in \mathbb{R}^d.
\end{equation}
This function is $(\gamma,\mu)$-strongly quasar convex independently of the choice of $g$ (see \cite[Appendix D.3]{hinder2023nearoptimal} for the non strongly quasar convex case, and see Appendix \ref{appendix synthetic} for the strongly quasar convex case). An example of such a function is displayed on the left side of Figure \ref{fig:1}. Radially this function behaves like $cf(\lVert x \rVert)$ where $c$ is constant. Restricted to this direction, the function is unimodal and critical points are minimizers \cite[Observation 1]{hinder2023nearoptimal}. However since $g$ may be extremely non convex, we see that taking the segment between two arbitrary points $x_0$ and $x_1$, smoothness aside we will have no control over the behaviour of the function (\textit{e.g.} right side of Figure \ref{fig:1}).\\
This lack of local regularity may not be a big deal with gradient descent, as it follows a descent direction at each iteration. However, in the case of NAG, the presence of inertia prevents from controlling the direction of the trajectory. We will see later that this lack of local regularity complicates considerably the potential acceleration of NAG.
\section{Acceleration with curvature assumption for strongly quasar convex functions}
\subsection{The gradient descent convergence}
As we are interested in faster algorithms than gradient descent, we first set the convergence results associated with this algorithm. We recall it is defined for some $x_0 \in \mathbb{R}^d$, $s\geqslant0$ by this recursive formula:
\begin{equation}\label{gradient descent}\tag{GD}
    x_{n+1} = x_n - s \nabla F(x_n).
\end{equation}
Let us first recall a known result for smooth strongly convex functions. 
\begin{proposition}[\cite{nesterovbook}]\label{prop gd sc}
    Let $F:\R^d\rightarrow \R$ be a $L$-smooth and $\mu$-strongly convex function for some $0<\mu \leqslant L$, and $x^*$ its minimizer. Let $F^*=\min~F$. Let $(x_n)_{n \in \mathbb{N}}$ be generated by (\ref{gradient descent}) with stepsize $s = \frac{1}{L}$. Then:
    \begin{equation}
    \forall n\in \N,~\lVert x_n-x^\ast \rVert^2 \leqslant \left(1-\frac{\mu}{L}\right)^{n}\lVert x_0-x^\ast \rVert^2.
    \end{equation}
\end{proposition}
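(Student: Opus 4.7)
The plan is to establish a one-step contraction of the form $\|x_{n+1}-x^\ast\|^2 \leqslant (1-\tfrac{\mu}{L})\|x_n-x^\ast\|^2$ and then iterate. The two standard ingredients I will need are already encoded in \eqref{def L-smooth} and \eqref{def strongly conv}: from $L$-smoothness I will extract the bound $\|\nabla F(x_n)\|^2 \leqslant 2L(F(x_n)-F^\ast)$ (obtained by minimising the upper quadratic bound in $y$, which yields $y = x_n - \tfrac{1}{L}\nabla F(x_n)$), and from $\mu$-strong convexity applied at $x = x_n$, $y = x^\ast$ I will get $\langle \nabla F(x_n), x_n - x^\ast\rangle \geqslant F(x_n) - F^\ast + \tfrac{\mu}{2}\|x_n - x^\ast\|^2$.

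Then I would just expand the squared norm:
\[
\|x_{n+1}-x^\ast\|^2 = \|x_n-x^\ast\|^2 - \tfrac{2}{L}\langle \nabla F(x_n), x_n - x^\ast\rangle + \tfrac{1}{L^2}\|\nabla F(x_n)\|^2.
\]
Plugging in the smoothness bound $\tfrac{1}{L^2}\|\nabla F(x_n)\|^2 \leqslant \tfrac{2}{L}(F(x_n)-F^\ast)$ and then the strong convexity bound on the inner product, the two $(F(x_n)-F^\ast)$ terms cancel and one is left with $\|x_{n+1}-x^\ast\|^2 \leqslant (1-\tfrac{\mu}{L})\|x_n-x^\ast\|^2$.

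Iterating this inequality from $n=0$ concludes the proof. The main (very mild) obstacle here is just the bookkeeping of which constant multiplies what; the choice $s = 1/L$ is exactly what makes the $F(x_n)-F^\ast$ cross terms cancel cleanly. Note also that one could alternatively use the co-coercivity inequality $\langle \nabla F(x) - \nabla F(y), x-y\rangle \geqslant \tfrac{\mu L}{\mu+L}\|x-y\|^2 + \tfrac{1}{\mu+L}\|\nabla F(x)-\nabla F(y)\|^2$ applied at $y = x^\ast$, which yields a slightly sharper rate $(1-\tfrac{2\mu}{\mu+L})^n$ with the larger stepsize $s = \tfrac{2}{\mu+L}$, but since the statement is aimed at the rate $(1-\tfrac{\mu}{L})^n$ with $s=\tfrac{1}{L}$, the elementary two-inequality argument above is the most direct route.
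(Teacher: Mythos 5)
Your proof is correct, and the paper itself offers no proof of this proposition (it is stated with a citation to Nesterov's book), so the standard one-step contraction you give is exactly the intended argument. The two ingredients are valid: minimising the upper bound in \eqref{def L-smooth} over $y$ gives $\lVert \nabla F(x_n)\rVert^2 \leqslant 2L(F(x_n)-F^\ast)$, strong convexity at $(x_n, x^\ast)$ gives the lower bound on the inner product, and with $s=\tfrac{1}{L}$ the $F(x_n)-F^\ast$ terms indeed cancel, leaving $\lVert x_{n+1}-x^\ast\rVert^2 \leqslant (1-\tfrac{\mu}{L})\lVert x_n-x^\ast\rVert^2$. One small quibble with your closing aside only: the co-coercivity route with $s=\tfrac{2}{\mu+L}$ actually yields the squared factor $\bigl(1-\tfrac{2\mu}{\mu+L}\bigr)^{2n} = \bigl(\tfrac{L-\mu}{L+\mu}\bigr)^{2n}$, not $\bigl(1-\tfrac{2\mu}{\mu+L}\bigr)^{n}$; but this does not affect your main argument, which stands as written.
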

Proposition~\ref{prop gd sc} indicates that for $\mu$-strongly convex and $L$-smooth functions, (\ref{gradient descent}) achieves a linear decrease.
Proposition~\ref{prop gd sc} can be extended to strongly quasar convex functions. To the best of our knowledge, the following proposition is not clearly stated in literature. The case $\gamma = 1$ is proved in \cite{guilleescuret2020study}. A result can be found in the stochastic case in \cite{gower2021sgd}, from which a result for the deterministic case can be deduced. Their stepsize is however lower than ours, so it yields a slower rate of convergence.
\begin{proposition}\label{prop gd qsc}
    Let $F:\R^d \rightarrow \R$ be a $L$-smooth and $(\gamma,\mu)$-strongly quasar convex function for some $0<\mu \leqslant L$, $\gamma \in (0,1]$, and let $x^*$ be its minimizer. Let $(x_n)_{n \in \mathbb{N}}$ be generated by (\ref{gradient descent}) with stepsize $s \leqslant \frac{1}{L}$. Then:
    \begin{equation}
    \forall n\in \N,~F(x_n) - F^\ast \leqslant \frac{2}{\gamma}(1 - \gamma\mu s)^n(F(x_0) - F^\ast).
    \end{equation}
\end{proposition}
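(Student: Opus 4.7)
The plan is a Lyapunov approach, defining
\[
E_n := \frac{1}{\mu}\bigl(F(x_n) - F^*\bigr) + \frac{1}{2}\|x_n - x^*\|^2,
\]
and targeting the one-step contraction $E_{n+1} \leqslant (1-\gamma\mu s)\, E_n$. The weight $1/\mu$ is natural, since it is precisely the one that balances the two terms appearing in the strongly quasar convex inequality at $x = x_n$. The geometric rate will follow by iteration, and the prefactor $2/\gamma$ in the statement will arise from bounding $E_0$ via the quadratic growth property of Proposition~\ref{SQC implis PL & QG}.

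For the descent step, I would expand
\[
\frac{1}{2}\|x_{n+1}-x^*\|^2 - \frac{1}{2}\|x_n-x^*\|^2 = -s\langle \nabla F(x_n),\, x_n - x^*\rangle + \frac{s^2}{2}\|\nabla F(x_n)\|^2,
\]
apply $(\gamma,\mu)$-strong quasar convexity at $x_n$ to lower-bound the inner product by $\gamma(F(x_n) - F^*) + \frac{\gamma\mu}{2}\|x_n - x^*\|^2$, and invoke the descent lemma which, thanks to $s \leqslant 1/L$, gives $F(x_{n+1}) - F(x_n) \leqslant -\frac{s}{2}\|\nabla F(x_n)\|^2$. Summing these contributions, the coefficient of $\|\nabla F(x_n)\|^2$ equals $\frac{s}{2}\bigl(s - \frac{1}{\mu}\bigr)$, which is nonpositive because $s \leqslant 1/L \leqslant 1/\mu$. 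Discarding this nonpositive term leaves
\[
E_{n+1} - E_n \leqslant -s\gamma \bigl(F(x_n)-F^*\bigr) - \frac{s\gamma\mu}{2}\|x_n - x^*\|^2 = -s\gamma\mu\, E_n,
\]
which is exactly the desired contraction. Iterating gives $E_n \leqslant (1-\gamma\mu s)^n E_0$.

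To finish, I would apply the quadratic growth bound $\frac{1}{2}\|x_0-x^*\|^2 \leqslant \frac{2-\gamma}{\gamma\mu}\bigl(F(x_0)-F^*\bigr)$ provided by Proposition~\ref{SQC implis PL & QG}, which yields $E_0 \leqslant \bigl(\frac{1}{\mu} + \frac{2-\gamma}{\gamma\mu}\bigr)\bigl(F(x_0)-F^*\bigr) = \frac{2}{\gamma\mu}\bigl(F(x_0) - F^*\bigr)$. Combined with the trivial lower bound $\frac{1}{\mu}(F(x_n) - F^*) \leqslant E_n$, this produces the announced inequality. No real obstacle is expected here; the only delicate point is the choice of Lyapunov weight $1/\mu$, which is precisely the value that absorbs the residual $\frac{s^2}{2}\|\nabla F(x_n)\|^2$ without further restricting the stepsize beyond $s\leqslant 1/L$, and that combines cleanly with quadratic growth so as to yield the clean constant $2/\gamma$ rather than a stepsize-dependent prefactor.
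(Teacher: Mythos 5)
Your proposal is correct and follows essentially the same route as the paper: the paper uses the Lyapunov function $E_n = F(x_n)-F^\ast + \frac{\mu}{2}\lVert x_n-x^\ast\rVert^2$, which is just $\mu$ times yours, and performs the identical expansion, the same use of strong quasar convexity and the descent lemma, and the same final appeal to quadratic growth to get the $\frac{2}{\gamma}$ prefactor. All steps check out.
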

%
\begin{proof}
Let $x^*$ be the quasar convex point of $F$ and:
\begin{equation}
        E_n = F(x_n) - F^\ast + \frac{\mu}{2}\lVert x_n - x^\ast \rVert^2.
    \end{equation}
We compute
\begin{align}
    E_{n+1} - E_n &= F(x_{n+1})-F(x_n) + \frac{\mu}{2}\lVert x_{n+1} - x^\ast \rVert^2 - \frac{\mu}{2}\lVert x_n - x^\ast \rVert^2\\
    &\overset{(\ref{gradient descent})}{=}F(x_{n+1})-F(x_n) -\mu s \langle x_n - x^\ast,\nabla F(x_n) \rangle + s^2 \frac{\mu}{2} \lVert \nabla F(x_n) \rVert^2.
\end{align}
The $L$-smooth inequality (\ref{def L-smooth}) implies that: $F(x_{n+1}) - F(x_n) \leqslant -\frac{s}{2}\lVert \nabla F(x_n) \rVert^2$ provided that $s\leqslant \frac{1}{L}$. Combined with the $(\gamma,\mu)$-strongly quasar convexity to control the scalar product, we get:
\begin{align}
    E_{n+1} - E_n &\leq-\frac{s}{2}\lVert \nabla F(x_n) \rVert^2- \gamma\mu s (F(x_n)-F^\ast) -\gamma s \frac{\mu^2}{2}\lVert x_n - x^\ast \rVert^2 + s^2 \frac{\mu}{2} \lVert \nabla F(x_n) \rVert^2\\
    &= \frac{s}{2}\left( \mu s-1 \right)\lVert \nabla F(x_n) \rVert^2 - \gamma \mu s \left( F(x_n) - F^\ast + \frac{\mu}{2}\lVert x_n - x^\ast \rVert^2 \right)
\end{align}
as $s \leqslant \frac{1}{L}$ the first term is negative, inducing 
\begin{equation}
    E_{n+1} - E_n \leqslant -\gamma \mu s E_n \Rightarrow E_{n+1} \leqslant (1-\gamma \mu s)E_n.
\end{equation}
By induction, we then deduce: 
 \begin{equation}
    \forall n\in \N,~     E_n \leqslant (1 - \gamma\mu s)^nE_0.
    \end{equation}
    By definition of $E_0$, and because $E_n \geqslant F(x_n) - F^\ast$, it follows that
    \begin{equation}
        F(x_n) - F^\ast \leqslant  (1 - \gamma\mu s)^n(F(x_0) - F^\ast + \frac{\mu}{2}\lVert x_0 - x^\ast \rVert^2).
    \end{equation}
Using the $\frac{\gamma \mu}{2-\gamma}$-quadratic growth induced by the $(\gamma,\mu)$-quasar strong convexity of $F$ (see Corollary 1 \cite{hinder2023nearoptimal}), we finally get the expected convergence rate.
\end{proof}

\subsection{The Nesterov Accelerated Gradient convergence}\label{sec:sc}
The Nesterov accelerated gradient algorithm used to optimize $L$-smooth and $\mu$-strongly convex function (NAG-SC) is often written in the following way \cite[Algorithm (2.2.22)]{NesterovBook2018}:
\begin{equation}\label{NAG-SC 2 POINTS}\tag{NAG-SC 2 POINTS}
    \left\{
    \begin{array}{ll}
        y_n = x_n + \frac{1 - \sqrt{\frac{\mu}{L}}}{1+\sqrt{\frac{\mu}{L}}} (x_n - x_{n-1}) \\
        x_{n+1} = y_n - \frac{1}{L}\nabla F(y_n)
    \end{array}
\right.
\end{equation}
Introducing the auxiliary variable:
$$z_n = \left(1+\sqrt\frac{L}{\mu}\right)y_n -\sqrt\frac{L}{\mu}x_n,$$
this algorithm can be rewritten as a $3$-points scheme, see \cite[Algorithm (2.2.19) with $\gamma_0 = \mu$]{NesterovBook2018}:
\begin{equation}\label{NAG-SC 3 POINTS}\tag{NAG-SC 3 POINTS}
    \left\{
    \begin{array}{ll}
        y_n = \left( \frac{1}{1 + \sqrt{\frac{\mu}{L}}} \right) x_n + (1- \frac{1}{1 + \sqrt{\frac{\mu}{L}}})z_n \\
        x_{n+1} = y_n - \frac{1}{L}\nabla F(y_n)\\
        z_{n+1} = (1 - \sqrt{\frac{\mu}{L}})z_n + \sqrt{\frac{\mu}{L}}(y_n - \frac{1}{\mu} \nabla F(y_n))
    \end{array}
\right.
\end{equation}
We have the following well known result.
\begin{theorem}[\cite{nesterovbook}]\label{th:cv_sc}
    Let $F:\R^d\rightarrow \R$ be $L$-smooth and $\mu$-strongly convex function for some $0<\mu \leqslant L$, and $x^*$ its unique minimizer. Let $F^* = \min~F$. Let $(x_n)_{n \in \mathbb{N}}$ be the sequence generated by (\ref{NAG-SC 3 POINTS}) with $x_0 = z_0$. Then:
    \begin{equation}
      \forall n\in \N,~  F(x_n) - F^\ast \leqslant  \left(1 - \sqrt{\frac{\mu}{L}}\right)^n\left( F(x_0)-F^\ast + \frac{\mu}{2}\lVert x_0 - x^\ast \rVert^2\right).
    \end{equation}
\end{theorem}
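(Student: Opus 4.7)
The plan is to exhibit a Lyapunov function
\[ E_n := F(x_n) - F^\ast + \frac{\mu}{2}\lVert z_n - x^\ast \rVert^2 \]
and show it contracts geometrically: $E_{n+1} \leqslant (1 - \sqrt{\mu/L})\, E_n$. Since $z_0 = x_0$ gives $E_0 = F(x_0) - F^\ast + \frac{\mu}{2}\lVert x_0 - x^\ast \rVert^2$ and since $F(x_n) - F^\ast \leqslant E_n$, iterating this one-step inequality immediately yields the claimed bound.

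Set $q := \sqrt{\mu/L}$, so that the first line of \eqref{NAG-SC 3 POINTS} reads $y_n = \frac{1}{1+q} x_n + \frac{q}{1+q} z_n$. First, smoothness applied to $x_{n+1} = y_n - \frac{1}{L}\nabla F(y_n)$ gives the descent bound $F(x_{n+1}) \leqslant F(y_n) - \frac{1}{2L}\lVert \nabla F(y_n)\rVert^2$. Second, I would expand $\lVert z_{n+1} - x^\ast \rVert^2$ using
\[ z_{n+1} - x^\ast = (1-q)(z_n - x^\ast) + q(y_n - x^\ast) - \frac{q}{\mu}\nabla F(y_n) \]
together with the algebraic identity $\lVert (1-q)a + q b\rVert^2 = (1-q)\lVert a\rVert^2 + q\lVert b\rVert^2 - q(1-q)\lVert a-b\rVert^2$. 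The $\lVert \nabla F(y_n)\rVert^2$ term produced inside $\frac{\mu}{2}\lVert z_{n+1}-x^\ast\rVert^2$ has coefficient exactly $\frac{1}{2L}$, cancelling the descent term: this is precisely why $s = 1/L$ is the right step size.

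The heart of the argument is then bounding the remaining cross term $-q\langle \nabla F(y_n),\,(1-q)(z_n - x^\ast) + q(y_n - x^\ast)\rangle$. Using the interpolation one checks that $(1-q)(z_n - y_n) = \frac{1-q}{q}(y_n - x_n)$, which rewrites this cross term as a linear combination of $\langle \nabla F(y_n), x^\ast - y_n\rangle$ and $\langle \nabla F(y_n), x_n - y_n\rangle$. Applying strong convexity to the first (which additionally absorbs the leftover $\frac{\mu q}{2}\lVert y_n - x^\ast\rVert^2$) and plain convexity between $y_n$ and $x_n$ to the second, the surviving pieces collect into exactly $(1-q)(F(x_n)-F^\ast) + \frac{\mu(1-q)}{2}\lVert z_n - x^\ast\rVert^2 = (1-q)E_n$, up to a nonpositive $-\frac{\mu q(1-q)}{2}\lVert z_n - y_n\rVert^2$ contribution that one discards. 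I expect the only genuine obstacle to be bookkeeping: the proof is entirely algebraic once one commits to this Lyapunov function, but verifying the exact cancellations requires care with the coefficients, which are engineered precisely so that the telescoping produces the accelerated rate $1-\sqrt{\mu/L}$ rather than the gradient-descent rate $1-\mu/L$.
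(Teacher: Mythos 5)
Your proposal is correct: the Lyapunov function $E_n = F(x_n) - F^\ast + \frac{\mu}{2}\lVert z_n - x^\ast\rVert^2$, the cancellation of the $\frac{1}{2L}\lVert\nabla F(y_n)\rVert^2$ terms, the rewriting of the cross term via $(1-q)(z_n-y_n)=\frac{1-q}{q}(y_n-x_n)$, and the use of strong convexity at $x^\ast$ plus plain convexity between $y_n$ and $x_n$ all check out and yield $E_{n+1}\leqslant(1-\sqrt{\mu/L})E_n$. This is essentially the same argument the paper uses for its generalization (Theorem \ref{theorem 1}, proved in Appendix \ref{appendix proof theorem 1}), of which the present statement is the special case $\gamma=1$, $s=1/L$ with convexity replacing the curvature bound.
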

Observe that this is a considerable improvement over the $\bigO\left( (1-\frac{\mu}{L})^n \right)$ rate given by gradient descent (stepsize $\frac{1}{L}$), as $\frac{\mu}{L} \leqslant 1$ may be extremely low for high dimensional functions. 

Let us introduce a generalization of the classical (\ref{NAG-SC 3 POINTS}) algorithm, see Algorithm~\ref{algo} (see \textit{e.g.} \cite{hinder2023nearoptimal}).
\begin{algorithm}[H]
\caption{Nesterov Accelerated Gradient (3 points form)}\label{algo}
\begin{algorithmic} 
\STATE Let $z_0 = x_0$
\FOR{n = 0,\dots,}
\STATE $y_n = \alpha_nx_n + (1- \alpha_n)z_n$
\STATE $x_{n+1} = y_n - s\nabla F(y_n)$
\STATE $z_{n+1} = \beta_nz_n + (1 - \beta_n)y_n - \eta_n \nabla F(y_n)$
\ENDFOR
\end{algorithmic}
\end{algorithm}
 We want to use Algorithm~\ref{algo} to achieve an accelerated convergence similar to Theorem~\ref{th:cv_sc} with strongly quasar convex functions. We do so using the notion of \textit{curvature function} (\ref{curvature functions}):
 
\begin{theorem}\label{theorem 1}
    Let $F:\R^d\rightarrow \R$ be a $(\gamma,\mu)$-strongly quasar convex function for some $(\gamma,\mu)\in (0,1]\times \R_+^\ast$ and $F^*=\min~F$. Assume additionally that $F$ is a $(\rho,L)$-curvatured function for some $L>0$ and $\rho\leqslant L$. Let $(x_n)_{n \in \mathbb{N}}$ be a sequence of iterates generated by Algorithm 1 with parameters:
    \begin{center}
        $s\leqslant \frac{1}{L}$, $\alpha_n = \frac{1}{1 + \sqrt{\mu s}}$, $\beta_n = 1 - \gamma \sqrt{\mu s}$, $\eta_n = \frac{\sqrt{s}}{\sqrt{\mu }}$.
    \end{center} 
    If $\rho \geqslant -\gamma \sqrt{\frac{\mu}{s}}$, then:
        \begin{equation}
       \forall n\in \N,~ F(x_{n}) - F^\ast \leqslant \frac{2}{\gamma} \left( 1 - \gamma \sqrt{\mu s} \right)^n(F(x_0)-F^\ast).
    \end{equation}
\end{theorem}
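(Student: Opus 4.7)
My plan is to set $\tau := \gamma\sqrt{\mu s}$ and prove the one-step contraction $V_{n+1} \leqslant (1-\tau)V_n$ for the Lyapunov functional
\[
V_n := F(x_n) - F^\ast + \frac{\mu}{2}\lVert z_n - x^\ast\rVert^2.
\]
Granted this contraction, iterating yields $V_n \leqslant (1-\tau)^n V_0$; Proposition~\ref{SQC implis PL & QG} (the $\tfrac{\gamma\mu}{2-\gamma}$-quadratic growth implied by strong quasar convexity) gives $\tfrac{\mu}{2}\lVert x_0 - x^\ast\rVert^2 \leqslant \tfrac{2-\gamma}{\gamma}(F(x_0) - F^\ast)$, hence $V_0 \leqslant \tfrac{2}{\gamma}(F(x_0) - F^\ast)$ (using $z_0 = x_0$), and the stated rate follows since $F(x_n) - F^\ast \leqslant V_n$.

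To prove the contraction, I would combine three ingredients. The descent lemma (valid as $s \leqslant 1/L$) provides $F(x_{n+1}) - F(y_n) \leqslant -\tfrac{s}{2}\lVert \nabla F(y_n)\rVert^2$. The recursion for $z_{n+1}$ rewrites as
\[
z_{n+1} - x^\ast = (1-\tau)(z_n - x^\ast) + \tau(y_n - x^\ast) - \sqrt{s/\mu}\,\nabla F(y_n),
\]
whose squared norm, expanded via the identity $\lVert (1-\tau)u + \tau v\rVert^2 = (1-\tau)\lVert u\rVert^2 + \tau\lVert v\rVert^2 - \tau(1-\tau)\lVert u - v\rVert^2$, produces a $\tfrac{s}{2}\lVert \nabla F(y_n)\rVert^2$ contribution (exactly because $\mu\eta_n^2 = s$) that cancels the descent term. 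Finally, applying strong quasar convexity at $y_n$ gives the key bound $\langle \nabla F(y_n), y_n - x^\ast\rangle \geqslant \gamma(F(y_n) - F^\ast) + \tfrac{\gamma\mu}{2}\lVert y_n - x^\ast\rVert^2$.

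Combining these, and using the algebraic identities $1 - \alpha_n = \alpha_n\sqrt{\mu s}$ together with $z_n - y_n = \tfrac{1}{\sqrt{\mu s}}(y_n - x_n)$ to split $\langle \nabla F(y_n), z_n - x^\ast\rangle$ and convert its component along $z_n - y_n$ into a scalar multiple of $\langle \nabla F(y_n), y_n - x_n\rangle$, I expect the $\lVert y_n - x^\ast\rVert^2$ terms and the $\langle \nabla F(y_n), y_n - x^\ast\rangle$ contributions to cancel cleanly, leaving (with $\lVert z_n - y_n\rVert^2 = \tfrac{1}{\mu s}\lVert y_n - x_n\rVert^2$) an inequality of the form
\[
V_{n+1} - (1-\tau)V_n \leqslant (1-\tau)\bigl\{F(y_n) - F(x_n) - \langle \nabla F(y_n), y_n - x_n\rangle\bigr\} - \tfrac{(1-\tau)\tau}{2s}\lVert y_n - x_n\rVert^2.
\]
The lower curvature inequality between $x_n$ and $y_n$ then yields $F(y_n) - F(x_n) - \langle \nabla F(y_n), y_n - x_n\rangle \leqslant -\tfrac{\rho}{2}\lVert y_n - x_n\rVert^2$, and the remaining coefficient of $\lVert y_n - x_n\rVert^2$ becomes proportional to $\rho s + \tau = \rho s + \gamma\sqrt{\mu s}$, which is non-negative precisely under the hypothesis $\rho \geqslant -\gamma\sqrt{\mu/s}$, yielding the desired contraction.

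The main obstacle I anticipate is the algebraic bookkeeping in the middle step: ensuring that the many terms generated by the expansion of $\lVert z_{n+1} - x^\ast\rVert^2$ regroup onto the single quantity $F(y_n) - F(x_n) - \langle \nabla F(y_n), y_n - x_n\rangle$, so that the lower curvature bound is applied once at the very end and the threshold $-\gamma\sqrt{\mu/s}$ appears as the natural sign condition for the final coefficient. The delicate cancellations depend on the specific tuning of $\alpha_n$, $\beta_n$ and $\eta_n$ dictated by the theorem, and any mismatch there would break the contraction.
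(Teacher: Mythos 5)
Your proposal is correct and follows essentially the same route as the paper's proof in Appendix C.1.1: the same Lyapunov functional $E_n = F(x_n)-F^\ast+\tfrac{\mu}{2}\lVert z_n-x^\ast\rVert^2$, the same three ingredients (descent lemma, expansion of $\lVert z_{n+1}-x^\ast\rVert^2$ using $z_n-y_n=\tfrac{1}{\sqrt{\mu s}}(y_n-x_n)$, and strong quasar convexity at $y_n$), the same intermediate inequality isolating $F(y_n)-F(x_n)+\langle\nabla F(y_n),x_n-y_n\rangle$ before the single application of the lower-curvature bound, and the same conclusion via the $\tfrac{\gamma\mu}{2-\gamma}$-quadratic growth. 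The final sign condition you derive, $\rho s+\gamma\sqrt{\mu s}\geqslant 0$, matches the paper's coefficient exactly.
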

See the proof in Appendix \ref{appendix proof theorem 1}, where we prove linear decrease of the following Lyapunov function
\begin{equation}
    E_n = F(x_n) - F^\ast + \frac{\mu}{2}\lVert z_n - x^\ast \rVert^2.
\end{equation}
More precisely, we show $E_{n+1} \leqslant (1-\gamma \sqrt{\frac{\mu}{L}})E_{n}$, $\forall n \in \mathbb{N}$. The $(\rho,L)$-curvature assumption with $\rho<0$ allows us to replace convexity by a weaker bound control, namely:
$$\forall n\in \N, ~\langle \nabla F(y_n),x_n-y_n \rangle + F(y_n)  - F(x_n)  \leqslant  -\frac{\rho}{2}\lVert x_n - y_n \rVert^2.$$

Fixing $s = \frac{1}{L}$, the curvature bound becomes $\rho \geqslant -\gamma \sqrt{\mu L} = -\gamma \sqrt{\frac{\mu}{L}} L$. Parameterizing the algorithm with an arbitrary step size $s\leqslant \frac{1}{L}$ allows us to highlight that we can trade restriction on the curvature with convergence speed. Formally:
\begin{corollaire}
    Let $F:\R^d\rightarrow \R$ be a $L$-smooth and $(\gamma,\mu)$-strongly quasar convex function for some $0<\mu \leqslant L$, $\gamma \in (0,1]$, and let $F^*=\min~F$. Let $(x_n)_{n \in \mathbb{N}}$ generated by Algorithm 1 with parameters $s= \gamma^2 \frac{\mu}{L^2}$, $\alpha_n = \frac{1}{1 + \sqrt{\mu s}}$, $\beta_n = 1 - \gamma \sqrt{\mu s}$ and $\eta_n = \frac{\sqrt{s}}{\sqrt{\mu }}$. Then:
           \begin{equation}
        \forall n\in \N,~F(x_{n}) - F^\ast \leqslant \frac{2}{\gamma} \left( 1 - \gamma^2 \frac{\mu}{L} \right)^n(F(x_0)-F^\ast).
    \end{equation}\label{cor:SQC}
\end{corollaire}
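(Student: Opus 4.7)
The plan is to deduce Corollary~\ref{cor:SQC} directly from Theorem~\ref{theorem 1} by picking the step size $s$ as large as possible while still satisfying the curvature hypothesis when the only information on the lower curvature is that $F$ is $L$-smooth.

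Concretely, first I would observe that $L$-smoothness (Property~1 and Definition~\ref{def:a_l_curvature}) means that $F$ is automatically $(-L,L)$-curvatured, so the best lower-curvature constant available without any extra assumption is $\rho = -L$. Plugging this into the hypothesis of Theorem~\ref{theorem 1}, the requirement $\rho \geqslant -\gamma \sqrt{\mu/s}$ becomes $L \leqslant \gamma \sqrt{\mu/s}$, which after squaring and rearranging is equivalent to
\begin{equation*}
s \leqslant \frac{\gamma^2 \mu}{L^2}.
\end{equation*}
Since we also need $s \leqslant 1/L$ in Theorem~\ref{theorem 1}, I would check that this second condition is automatically implied: because $\gamma \in (0,1]$ and $\mu \leqslant L$, we have $\gamma^2 \mu / L^2 \leqslant \mu/L^2 \leqslant 1/L$.

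Next, I would take the largest admissible step $s = \gamma^2 \mu / L^2$ and substitute into the conclusion of Theorem~\ref{theorem 1}. This gives
\begin{equation*}
\gamma \sqrt{\mu s} = \gamma \sqrt{\mu \cdot \frac{\gamma^2 \mu}{L^2}} = \gamma \cdot \frac{\gamma \mu}{L} = \frac{\gamma^2 \mu}{L},
\end{equation*}
so that the parameters $\alpha_n = 1/(1+\sqrt{\mu s})$, $\beta_n = 1 - \gamma \sqrt{\mu s}$, $\eta_n = \sqrt{s/\mu}$ are exactly those stated in the corollary, and the rate becomes $(1 - \gamma^2 \mu / L)^n$ with the prefactor $2/\gamma$ inherited from Theorem~\ref{theorem 1}. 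There is no real obstacle: the only thing to be careful about is to verify that $s = \gamma^2 \mu/L^2$ indeed respects both constraints $s \leqslant 1/L$ and $\rho \geqslant -\gamma\sqrt{\mu/s}$ in Theorem~\ref{theorem 1}, which is the short computation sketched above. Thus the corollary follows as an immediate specialization of Theorem~\ref{theorem 1}, illustrating quantitatively the trade-off between curvature restriction and convergence speed discussed right before the statement.
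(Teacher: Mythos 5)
Your proposal is correct and follows exactly the paper's argument: the paper's own proof is the one-line observation that solving $-\gamma\sqrt{\mu/s}=-L$ gives $s=\gamma^2\mu/L^2$, which is precisely your specialization of Theorem~\ref{theorem 1} with $\rho=-L$. Your additional checks (that $s\leqslant 1/L$ holds automatically since $\gamma^2\mu/L^2\leqslant 1/L$, and the computation $\gamma\sqrt{\mu s}=\gamma^2\mu/L$) are correct and merely make explicit what the paper leaves implicit.
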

\begin{proof}
    Just solve $-\gamma \sqrt{\frac{\mu}{s}} = -L$, we get $s = \gamma^2\frac{\mu}{L^2}$.
\end{proof}
We see that we can delete the curvature assumption, but at the cost of acceleration. The negative curvature problem has already been observed under other non convex hypothesis. Authors in \cite{convexguilty, momentumsaddle} prove acceleration convergence to a critical point in a Hessian Lipschitz setting, where they use NAG when curvature between iterates is not too negative, otherwise an alternative step using Hessian Lipschitz property is performed. In our case, alternative step using Hessian Lipschitz property is hardly useful as our Lyapunov designed to obtain linear convergence is less adapted to this argument.

\paragraph{Comparison with the algorithm of \cite[Algorithm 3]{hinder2023nearoptimal} (binary search)} The convergence bound of Theorem~\ref{theorem 1} is similar to the one of \cite{hinder2023nearoptimal}. They achieve this bound using Algorithm~\ref{algo} where, at each iteration, a line search procedure to compute one of the parameter (see Algorithm 3 in \cite{hinder2023nearoptimal}). This results in a $\log\left(\gamma^{-1}\frac{L}{\mu}\right)$ factor added to the number of gradient and function evaluations needed, that does not appear in our convergence rate. Getting rid of this line search procedure allows to get deeper insights about NAG using ODEs modeling, as we see in Section~\ref{sec:continu}.
\paragraph{2 points version form of algorithm 1} Following arguments of \cite{lee2021geometric}, we can rewrite our algorithm in a 2 points version, as stated with this result:
\begin{proposition}\label{2 points scheme}
The algorithm 1 with parameters $s\leqslant \frac{1}{L}$, $\alpha_n = \frac{1}{1 + \sqrt{\mu s}}$, $\beta_n = 1 - \gamma \sqrt{\mu s}$ and $\eta_n = \frac{\sqrt{s}}{\sqrt{\mu }}$ can be written as the following 2 points algorithm:
    \begin{equation}\label{NAG-QSC-2 points}\tag{NAG-SQC 2 POINTS}
    \left\{
    \begin{array}{ll}
        y_n = x_n + \frac{1 - \gamma \sqrt{\mu s}}{1 + \sqrt{\mu s}} (x_n - x_{n-1}) + \frac{\sqrt{\mu s}}{1 + \sqrt{\mu s}}(\gamma -1 )(x_n - y_{n-1}) \\
        x_{n+1} = y_n - s\nabla F(y_n)
    \end{array}
\right. 
\end{equation}
\end{proposition}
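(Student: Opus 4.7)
The plan is to eliminate the auxiliary variable $z_n$ from Algorithm 1 by expressing it in terms of $x_n$ and $y_n$ via the first line of the algorithm, and then substituting into the update rule for $z_{n+1}$. Since the parameters $\alpha = \frac{1}{1+\sqrt{\mu s}}$, $\beta = 1-\gamma\sqrt{\mu s}$ and $\eta = \frac{\sqrt{s}}{\sqrt{\mu}}$ are all $n$-independent here, the identity $y_n = \alpha x_n + (1-\alpha)z_n$ can be inverted uniformly to yield
\[
z_n = \frac{1+\sqrt{\mu s}}{\sqrt{\mu s}}\, y_n - \frac{1}{\sqrt{\mu s}}\, x_n,
\]
and the same formula holds at index $n+1$.

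Next, I would use the $x_{n+1}$-update $x_{n+1} = y_n - s \nabla F(y_n)$ to replace $\nabla F(y_n)$ by $\frac{1}{s}(y_n - x_{n+1})$ in the $z_{n+1}$ recursion. Substituting both the expression of $z_n$ and $z_{n+1}$ in terms of $(x,y)$, and clearing denominators by multiplying through by $\sqrt{\mu s}$, the update for $z_{n+1}$ becomes a linear relation in $y_{n+1}, x_{n+1}, y_n, x_n$. A direct computation (grouping the coefficient of $y_n$ through the identity $(1-\gamma\sqrt{\mu s})(1+\sqrt{\mu s}) + \gamma\mu s - 1 = (1-\gamma)\sqrt{\mu s}$) then gives
\[
(1+\sqrt{\mu s})\, y_{n+1} = 2\,x_{n+1} - (1-\gamma\sqrt{\mu s})\, x_n + (1-\gamma)\sqrt{\mu s}\, y_n.
\]

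To finish, I would check that this relation is equivalent to the announced two-point recursion, by expanding the right-hand side of
\[
y_{n+1} = x_{n+1} + \frac{1-\gamma\sqrt{\mu s}}{1+\sqrt{\mu s}}(x_{n+1}-x_n) + \frac{\sqrt{\mu s}}{1+\sqrt{\mu s}}(\gamma-1)(x_{n+1}-y_n)
\]
after multiplication by $1+\sqrt{\mu s}$; the $x_{n+1}$ coefficients sum exactly to $2$, the $x_n$ coefficient is $-(1-\gamma\sqrt{\mu s})$ and the $y_n$ coefficient is $(1-\gamma)\sqrt{\mu s}$, matching the previous identity. Shifting the index $n+1\mapsto n$ recovers the statement of the proposition, together with the unchanged gradient step $x_{n+1}=y_n-s\nabla F(y_n)$.

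The proof is essentially algebraic and the only mildly delicate step is keeping track of the coefficient of $y_n$ in the substituted relation; there is no real obstacle, merely careful bookkeeping. An initialization remark is in order: the two-point form requires a value of $y_{-1}$ (or equivalently $x_{-1}$), which is fixed by the initialization $z_0=x_0$ of Algorithm 1, making the two formulations consistent from the first iterate onward.
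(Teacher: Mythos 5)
Your proof is correct, but it takes a genuinely different route from the paper's. The paper adapts a geometric argument of Lee et al.\ based on Thales' theorem: it first establishes a two-point reformulation valid for arbitrary, possibly $n$-dependent parameters $\alpha_n,\beta_n,\eta_n$, namely $y_n = x_n + \frac{1-\alpha_n}{1-\alpha_{n-1}}\alpha_{n-1}\beta_{n-1}(x_n-x_{n-1}) + (1-\alpha_n)\bigl(\frac{\eta_{n-1}}{s}-\frac{\alpha_{n-1}\beta_{n-1}}{1-\alpha_{n-1}}-1\bigr)(x_n-y_{n-1})$, and then specializes to the chosen constants. You instead eliminate $z_n$ algebraically, inverting $y_n=\alpha x_n+(1-\alpha)z_n$ uniformly (which is legitimate precisely because the parameters are $n$-independent here) and substituting $\nabla F(y_n)=\frac{1}{s}(y_n-x_{n+1})$. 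I checked your pivotal identity $(1-\gamma\sqrt{\mu s})(1+\sqrt{\mu s})+\gamma\mu s-1=(1-\gamma)\sqrt{\mu s}$ and the coefficient matching ($x_{n+1}$ coefficient $2$, $x_n$ coefficient $-(1-\gamma\sqrt{\mu s})$, $y_n$ coefficient $(1-\gamma)\sqrt{\mu s}$): both are exact, so the two formulations coincide. Your approach buys brevity and self-containedness (no external geometric lemma), while the paper's buys the general variable-parameter formula as a byproduct. One small imprecision in your closing remark: the initialization $z_0=x_0$ forces $y_0=x_0$ directly, so the two-point recursion is only needed for $n\geqslant 1$ and no fictitious $y_{-1}$ or $x_{-1}$ actually has to be introduced.
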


When $\gamma = 1$, we recover the classical Nesterov algorithm for minimizing strongly convex functions. Note that 2 points scheme versions are widely present in the literature. Considering the 2 points or the 3 points version gives different interpretations of the algorithm, and switching from a version to another is not obvious. 
The proof of Proposition~\ref{2 points scheme} is in Appendix \ref{appendix 2 points}.

\subsection{Composite non differentiable case}\label{section non diff}

A natural way to extend the differential case to the non differentiable one is to consider the class of composite functions $F = f + g$, where $f$ is assumed to be differentiable and strongly quasar convex, and $g$ convex, proper lower semi-continuous, and such that its proximal operator \cite{combettes2010proximal} can be computed:
\begin{equation}
    \text{prox}_{g}(x) = \arg \min_y \left(g(y) + \frac{1}{2}\lVert x-y \rVert^2 \right).
\end{equation}
Typical example of such functions are the composite functions with a $\ell_1$ penalisation term: $F=f+\lVert x \rVert_1$.
The idea is then to replace the gradient in Algorithm 1 by the composite gradient mapping:
\begin{equation}
   \nabla f(y_n) \rightarrow \frac{1}{s}\left( y_n - \text{prox}_{sg}(y_n - s \nabla f(y_n)) \right)
\end{equation}
which is a generalization of the gradient (we recover it when $g \equiv 0$). It leads us to Algorithm 2.
     \begin{algorithm}
\caption{Proximal Nesterov Accelerated Gradient (3 points form)}
\begin{algorithmic} \label{algo non diff}
\STATE Soit $z_0 = x_0$
\FOR{k = 0,\dots,}
\STATE $y_n = \alpha_nx_n + (1- \alpha_n)z_n$
\STATE $x_{n+1} = \text{prox}_{sg}(y_n - s \nabla f(y_n)):= T_s(y_n))$
\STATE $z_{n+1} = \beta_nz_n + (1 - \beta_n)y_n - \frac{\eta_n}{s}(y_n - T_s(y_n)) $
\ENDFOR
\end{algorithmic}
\end{algorithm}

Extending Theorem \ref{theorem 1} to the non-differentiable case introduces an additional technicality since a minimizer/quasar point of $f$ is generally not a minimizer of the composite function $F$. In other words, if $f$ is strongly quasar convex with respect to its minimizer $x^\ast_f$, there is clearly no guarantee that this assumption holds at a minimizer $x^\ast_F$ of $F$. To enforce the latter property, we propose an extension of strong quasar convexity with respect to another point than a minimizer as suggested in \cite[Appendix D.2]{hinder2023nearoptimal}:
\begin{definition}
Let $f:\R^d\rightarrow \R$ and $\hat x\in \R^d$. The function $f$ is said $(1,\mu)$-strongly quasar convex with respect to $\hat{x}$ if:
\begin{equation*}
\forall x\in \R^d,~\forall t\in [0,1],~f(t \hat{x} + (1-t)x) + t\left(1-t \right)\frac{\mu}{2}\lVert \hat x - x \rVert^2 \leqslant t f(\hat{x}) + (1-t)f(x),\label{def:QSC:nondiff}
\end{equation*}
or equivalently, when $f$ is additionally assumed to be differentiable:
    \begin{equation}
     \forall x\in \R^d, ~    f(\hat x) \geqslant f(x) + \langle\nabla f(x),\hat{x}-x \rangle + \frac{\mu}{2} \lVert \hat{x} - x \rVert^2.
    \end{equation}\label{def:SQC:nonminimizer}
\end{definition}
The equivalence is showed in \cite[Lemma 11]{hinder2023nearoptimal}, whose proof works for $\hat{x}$ not being a minimizer, if we consider $(\gamma,\mu)$ strong convexity with $\gamma = 1$. 
\paragraph{The $\gamma < 1$ case} 
Taking $\gamma < 1$ does not cope well with quasar convexity with respect to an arbitrary point.
Assume that for some $f \in C^1$, $\mu >0$ and $\gamma \in (0,1]$ the following holds:
    \begin{equation}\label{def:SQC:nonminimizergamma}
     \forall x\in \R^d, ~    f(\hat x) \geqslant f(x) +\frac{1}{\gamma} \langle\nabla f(x),\hat{x}-x \rangle + \frac{\mu}{2} \lVert \hat{x} - x \rVert^2.
    \end{equation} with $\hat{x}$ such that $\nabla f(\hat{x}) \neq 0$. 
Set $x_h = \hat{x} - h \nabla f(\hat{x})$, $h >0$. Using a order $1$ Taylor-Young development:
\begin{equation}
    f(\hat{x}) = f(x_h) + \langle \nabla f(x_h),\hat{x}-x_h \rangle + o(\lVert \hat{x}-x_h\rVert) = f(x_h) + h \langle \nabla f(x_h),\nabla f(\hat{x}) \rangle + o(h)
\end{equation}
Combining this with (\ref{def:SQC:nonminimizergamma}) evaluated in $x = x_h$, we have:
\begin{equation}
    \langle \nabla f(x_h), \nabla f(\hat{x}) \rangle \left( 1- \frac{1}{\gamma} \right) + \frac{o(h)}{h} \geqslant \frac{h\mu}{2}\lVert \nabla f(\hat{x}) \rVert^2
\end{equation}
As $h$ goes to $0$, the right hand side goes to $0$ while the left hand side goes to $\lVert \nabla f(\hat{x}) \rVert \left( 1- \frac{1}{\gamma} \right)$ because $f$ is $C^1$. As the latter is strictly negative for $\gamma \in (0,1)$, then in our definition $\gamma$ must be equal to $1$. 
\\
Observe now that Definition \ref{def:SQC:nonminimizer} is not an empty definition, and that such functions can easily be defined. A possible construction consists in adding some convex function to a $(1,\mu)$-strongly quasar convex function.
\begin{lemma}\label{lemma sqc not minimize}
  Let f: $\mathbb{R}^d \to \mathbb{R}$ be $(1,\mu)$-strongly quasar convex with respect to $\hat{x}$ and $g: \mathbb{R}^d \to \mathbb{R}$ be convex, both non necessarily differentiable. Then $F:= f + g$ is $(1,\mu)$-strongly quasar convex with respect to $\hat{x}$.
\end{lemma}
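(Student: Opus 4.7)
The plan is to exploit the non-differentiable characterization of $(1,\mu)$-strong quasar convexity that appears in Definition 2, since it is the one that composes cleanly with convexity by mere addition. Working at the level of the differentiable inequality would be awkward because $g$ is not assumed differentiable, but the interpolation-type inequality is tailor-made for this.

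First, I would write down what the hypothesis on $f$ gives, using the non-differentiable form: for every $x \in \R^d$ and every $t \in [0,1]$,
\begin{equation*}
f(t\hat{x} + (1-t)x) + t(1-t)\frac{\mu}{2}\lVert \hat{x} - x \rVert^2 \leqslant t f(\hat{x}) + (1-t) f(x).
\end{equation*}
Next, I would recall that convexity of $g$ means that for every $x \in \R^d$ and every $t \in [0,1]$,
\begin{equation*}
g(t\hat{x} + (1-t)x) \leqslant t g(\hat{x}) + (1-t) g(x).
\end{equation*}

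Adding these two inequalities termwise and using $F = f+g$ yields
\begin{equation*}
F(t\hat{x} + (1-t)x) + t(1-t)\frac{\mu}{2}\lVert \hat{x} - x \rVert^2 \leqslant t F(\hat{x}) + (1-t) F(x),
\end{equation*}
which is precisely the defining inequality of $(1,\mu)$-strong quasar convexity of $F$ with respect to $\hat{x}$ in the sense of Definition 2. There is essentially no obstacle here: the only subtlety is choosing the right characterization to work with, since the differentiable equivalence used elsewhere does not apply when $g$ is merely convex and possibly nonsmooth. If one wanted, one could additionally note that the construction is consistent: when $f$ and $g$ happen to be differentiable the gradient-form inequality for $F$ follows from summing the gradient-form inequality for $f$ with the standard subgradient inequality $g(\hat{x}) \geqslant g(x) + \langle \nabla g(x), \hat{x}-x\rangle$ for convex $g$.
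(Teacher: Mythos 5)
Your proof is correct and matches the paper's argument exactly: the paper also obtains the result by summing the non-differentiable (interpolation-form) strong quasar convexity inequality for $f$ with the convexity inequality for $g$ between $x$ and $\hat{x}$. The paper merely states this in one line, whereas you write out the inequalities explicitly; there is no substantive difference.
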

\begin{proof}
The proof is straightforward by summing the respective inequalities of $(1,\mu)$-strong quasar convexity of $f$ and convexity of $g$ written between any $x\in \R^d$ and $\hat x$.  
\end{proof}

As a special case of Lemma \ref{lemma sqc not minimize}, observe that if $f$ is $(1,\mu)$-strongly quasar convex with respect to its minimizer $x^\ast_f$, then $x^\ast_f$ is not necessarily a minimizer of $F$, but it is a quasar convex point of $F$.    

\paragraph{Convergence result for Algorithm \ref{algo non diff}} Using the extended definition of $(1,\mu)$-strongly quasar convexity, we are ready to state our result.
    \begin{theorem}\label{theorem 2}
    Let $F = f+g$ where $f: \mathbb{R}^d \to \mathbb{R}$ is a $L$-smooth function for some $L>0$ and $g: \mathbb{R}^d \to \mathbb{R}$ is convex, proper, lower semi-continuous. Assume that $F$ has a non empty set of minimizers and that $f$ is $(1,\mu)$-strongly quasar convex with respect to $x^\ast_F \in \arg \min~F$ for some $0<\mu \leqslant L$ and $(\rho,L)$-curvatured for some $\rho\leqslant L$.
Let $(x_n)_{n \in \mathbb{N}}$ be the sequence of iterates generated by Algorithm \ref{algo non diff} with parameters:
\begin{center}
    $s\leqslant \frac{1}{L}$, $\alpha_n = \frac{1}{1 + \sqrt{\mu s}}$, $\beta_n = 1 -  \sqrt{\mu s}$, $\eta_n = \frac{\sqrt{s}}{\sqrt{\mu }}$.
\end{center} If $\rho\geqslant -\sqrt{\frac{\mu}{s}}$, then:
\begin{equation}
     \forall n\in \N,~   F(x_{n}) - F^\ast \leqslant 2 \left( 1 - \sqrt{\mu s} \right)^n(F(x_0)-F^\ast).
    \end{equation}
\end{theorem}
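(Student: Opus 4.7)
The plan is to adapt the Lyapunov analysis of Theorem~\ref{theorem 1} to the composite setting, replacing gradient evaluations by the composite gradient mapping $G_s(y) := \frac{1}{s}(y - T_s(y))$. I keep the same energy
\begin{equation*}
E_n := F(x_n) - F^\ast + \frac{\mu}{2}\lVert z_n - x^\ast_F \rVert^2,
\end{equation*}
and aim to prove the one-step contraction $E_{n+1} \leqslant (1-\sqrt{\mu s})\,E_n$. The factor $2$ in the statement then follows from $E_0 \leqslant 2(F(x_0) - F^\ast)$, which in turn uses $\mu$-quadratic growth of $F$ at $x^\ast_F$: summing the convex-combination form of $(1,\mu)$-strong quasar convexity of $f$ with the convexity of $g$ and exploiting $F(t x^\ast_F + (1-t)x) \geqslant F^\ast$ yields $F(x) - F^\ast \geqslant \frac{\mu}{2}\lVert x - x^\ast_F\rVert^2$ by letting $t \to 1$.

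The first key step is to derive a composite three-point inequality. The $L$-smoothness of $f$ applied between $y_n$ and $x_{n+1} = y_n - s G_s(y_n)$, the subgradient inclusion $G_s(y_n) - \nabla f(y_n) \in \partial g(x_{n+1})$, and the convexity of $g$ evaluated between $x_{n+1}$ and an arbitrary reference point $w$ together yield, under $s \leqslant 1/L$,
\begin{equation*}
F(x_{n+1}) \leqslant f(y_n) + \langle \nabla f(y_n), w - y_n\rangle + g(w) + \langle G_s(y_n), y_n - w\rangle - \frac{s}{2}\lVert G_s(y_n) \rVert^2.
\end{equation*}
Specialising $w = x^\ast_F$ and invoking the $(1,\mu)$-strong quasar convexity of $f$ with respect to $x^\ast_F$ absorbs the $\langle \nabla f(y_n), \cdot \rangle$ term and produces
\begin{equation*}
F(x_{n+1}) \leqslant F^\ast + \langle G_s(y_n), y_n - x^\ast_F\rangle - \frac{s}{2}\lVert G_s(y_n) \rVert^2 - \frac{\mu}{2}\lVert y_n - x^\ast_F \rVert^2,
\end{equation*}
while specialising $w = x_n$ and invoking the lower $(\rho, L)$-curvature of $f$ gives
\begin{equation*}
F(x_{n+1}) \leqslant F(x_n) + \langle G_s(y_n), y_n - x_n\rangle - \frac{s}{2}\lVert G_s(y_n) \rVert^2 - \frac{\rho}{2}\lVert y_n - x_n \rVert^2.
\end{equation*}

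I then take the convex combination of these two inequalities with weights $\sqrt{\mu s}$ and $1-\sqrt{\mu s}$, and expand $\frac{\mu}{2}\lVert z_{n+1} - x^\ast_F \rVert^2$ from the update $z_{n+1} = (1-\sqrt{\mu s})z_n + \sqrt{\mu s}\,y_n - \sqrt{s/\mu}\,G_s(y_n)$ using the identity $\lVert a A + b B\rVert^2 = a\lVert A\rVert^2 + b\lVert B\rVert^2 - ab\lVert A-B\rVert^2$ for $a+b = 1$. The parameter choice $\alpha_n = (1+\sqrt{\mu s})^{-1}$ makes the vector $\sqrt{\mu s}(y_n - z_n) + (y_n - x_n)$ vanish, so all the inner products against $G_s(y_n)$ cancel; the quadratic $\frac{s}{2}\lVert G_s(y_n)\rVert^2$ contributions cancel between the $F$-bound and the $z$-expansion; and the $\sqrt{\mu s}\frac{\mu}{2}\lVert y_n - x^\ast_F\rVert^2$ terms cancel. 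Together with the identity $\lVert y_n - z_n\rVert^2 = \frac{1}{\mu s}\lVert y_n - x_n\rVert^2$ (a consequence of $y_n = \alpha_n x_n + (1-\alpha_n) z_n$), what remains is
\begin{equation*}
E_{n+1} - (1-\sqrt{\mu s})\,E_n \leqslant -\frac{1-\sqrt{\mu s}}{2}\bigl(\rho + \sqrt{\mu/s}\bigr)\lVert y_n - x_n \rVert^2,
\end{equation*}
which is nonpositive exactly under the hypothesis $\rho \geqslant -\sqrt{\mu/s}$.

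The main difficulty is the mismatch between $\nabla f(y_n)$, through which $(1,\mu)$-strong quasar convexity of $f$ operates, and $G_s(y_n)$, which actually drives the iterates. Bridging the two requires invoking convexity of $g$ at the post-prox point $x_{n+1}$, and it is precisely this bridge that restricts the composite extension to $\gamma = 1$: any factor $1/\gamma < 1$ on the $\nabla f$-side would clash with the factor $1$ inherent to the subdifferential inequality for $g$, as already observed in the discussion preceding Lemma~\ref{lemma sqc not minimize}.
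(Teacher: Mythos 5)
Your proposal is correct and follows essentially the same route as the paper: the same Lyapunov energy $E_n$, the same parameter choices, the prox-gradient inequality (your composite three-point inequality is exactly \cite[Theorem 10.16]{beck2017} rearranged) specialised at $w=x^\ast_F$ with strong quasar convexity and at $w=x_n$ with the $(\rho,L)$-curvature, and the same final cancellation yielding $E_{n+1}-E_n \leqslant -\sqrt{\mu s}\,E_n - \tfrac{1-\sqrt{\mu s}}{2}\bigl(\rho+\sqrt{\mu/s}\bigr)\lVert y_n-x_n\rVert^2$. The only cosmetic difference is organisational (convex combination of two specialisations of the three-point inequality versus the paper's direct expansion of $\lVert z_{n+1}-x^\ast_F\rVert^2$ followed by separate control of the two scalar products), and your explicit derivation of the $\mu$-quadratic growth of $F$ at $x^\ast_F$ fills in a step the paper leaves implicit.
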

    Proof is in Appendix \ref{appendix non diff theorem 2}. It uses the same Lyapunov function as in Theorem \ref{theorem 1} and it makes use of the \ref{prox-grad} theorem from \cite{beck2017}.
\\
It might seem weird, for practical purpose, to ask strong quasar convexity at a point which is not a minimizer of the aforementioned function. 
It allows to show that theoretically, convexity of $f$ is not necessary to extend the accelerated result in the differentiable case to the composite case. Note that together with difficulties of defining (strongly) quasar convex functions with respect to an arbitrary point, arguments used in our proof do not work if $\gamma < 1$. In that case, the $\gamma <1$ relaxation is non trivial, while in some cases it just demands computations adjusting. 
\section{Continuous analysis}\label{sec:continu}
Considering first order algorithms as a discretization of an ODE is a powerful tool, helping to gain intuition, and find and generalize new convergence results \cite{suboydcandes, siegel2021accelerated}.
The classic ODE version of NAG (Algorithm~\ref{algo}) is written as the following:
\begin{equation*}
    \ddot{X}(t) + \gamma(t)\dot{X}(t) + \nabla F(X(t)) = 0,
\end{equation*}
where $\gamma(t)$ is a non negative function. The solution of this ODE can be thought as the continuous analogous of the NAG algorithm. Using Lyapunov strategies, one can deduce convergence results on the solution of this system. This has been extensively done in the case of convex or strongly convex functions, \textit{e.g.} see
 \cite{suboydcandes,siegel2021accelerated, aujdossrondPL,hyppo, attouch2020firstorder,shi2018understanding,nesterov1983linear}.
 \subsection{High resolution ordinary differential equation}

In \cite{shi2018understanding} are introduced high resolution ODEs to study first order optimization algorithms. The interest of these ODEs is to describe more accurately the behaviour of the corresponding algorithms compared to the classical versions of these ODEs, thus renamed low-resolution ODEs. This is made by keeping $\bigO(\sqrt{s})$ terms during the discretization process, inducing that the ODE depends on the stepsize $s$ of NAG (Algorithm~\ref{algo}). 
\paragraph{$F$ $\mu$-strongly convex}
Considering a $\mu$-strongly convex function $F$,
Polyak's Heavy Ball \cite{POLYAK19641} and NAG-SC (see Section~\ref{sec:sc}) can be seen as discretizations of the same low resolution ODE:
\begin{equation*}
    \ddot{X}(t) + 2\sqrt{\mu} \dot{X}(t) + \nabla F(X(t)) = 0.
\end{equation*}
However, these algorithms correspond to two different high resolution ODEs (see \cite{shi2018understanding} for more details):
\begin{equation}\label{HB-EDO}\tag{HB-ODE}
    \ddot{X}(t) + 2\sqrt{\mu}\dot{X}(t) + (1 + \sqrt{\mu s})\nabla F(X(t)) = 0
\end{equation}
\begin{equation}\label{NAG-SC-ODE}\tag{NAG-SC-ODE}
    \ddot{X}(t) + 2\sqrt{\mu}\dot{X}(t) + \sqrt{s}\nabla^2 F(X(t))\dot{X}(t) + (1 + \sqrt{\mu s})\nabla F(X(t)) = 0
\end{equation}
As Polyak's Heavy ball cannot achieve an accelerated rate for smooth strongly convex functions, while NAG-SC can, understanding how the two high resolution ODEs differ is thus interesting. The only differing term is $\sqrt{s}\nabla^2 F(X(t))\dot{X}(t)$, which results in the corresponding algorithm to what the authors refer to as a gradient correction term. In (\ref{NAG-SC-ODE}), factorizing $\dot{X}$ terms, we see that the damping coefficient becomes $2\sqrt{\mu}+ \sqrt{s}\nabla^2 F(X(t))$, which is now adaptive to the position of $X$. In particular if $\dot{X}$ is colinear with the eigenvector corresponding to the highest eigenvalue of $\nabla^2 F$ (possibly $L$), the new damping rate increases, thus reducing oscillations.\\
In the $\mu$-strongly convex case, one can obtain the following convergence result for the solution of (\ref{NAG-SC-ODE}).
\begin{theorem}\label{thm:ode_sc}[Theorem~1, \cite{shi2018understanding}]
Let $F$ be $C^2$, $\mu$-strongly convex and $L$-smooth for some $0 < \mu \leqslant L$. Assume X is solution of (\ref{NAG-SC-ODE}) with $0< s \leqslant \frac{1}{L}$, $X(0) = X_0$ and $\dot{X}(0) =\frac{-2\sqrt{s}\nabla F(X_0)}{1+\sqrt{\mu s}}$. Then:
       \begin{equation}
            F(X(t))-F^\ast \leqslant  \frac{2 \lVert X_0 - x^\ast \rVert^2}{s} e^{-\frac{\sqrt{\mu}}{4}t}.
        \end{equation}
\end{theorem}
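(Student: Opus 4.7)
The plan is to prove exponential decay of a well chosen Lyapunov function along the trajectory of (\ref{NAG-SC-ODE}), then conclude with a Grönwall argument. Inspired by the Lyapunov analysis of the strongly convex Heavy Ball and low resolution NAG ODEs, but modified to cope with the gradient correction term $\sqrt{s}\nabla^2 F(X)\dot X$ appearing only in (\ref{NAG-SC-ODE}), I would work with
\begin{equation*}
\mathcal{E}(t) = (1+\sqrt{\mu s})(F(X(t))-F^\ast) + \tfrac{1}{4}\lVert \dot X(t) \rVert^2 + \tfrac{1}{4}\lVert \dot X(t) + 2\sqrt{\mu}(X(t)-x^\ast) + \sqrt{s}\,\nabla F(X(t)) \rVert^2.
\end{equation*}
The point of including $\sqrt{s}\,\nabla F(X)$ inside the mixed square is precisely that, when this square is differentiated, the extra piece $\sqrt{s}\,\nabla^2 F(X)\dot X$ produced by the chain rule combines with $\ddot X + 2\sqrt\mu \dot X + \sqrt{s}\nabla^2 F(X)\dot X$, which by the ODE equals $-(1+\sqrt{\mu s})\nabla F(X)$. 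So the Hessian correction disappears cleanly from $\dot{\mathcal{E}}$ and one is left with expressions that only involve $\nabla F$, $\dot X$ and $X-x^\ast$.

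Next I would differentiate $\mathcal{E}$ term by term and substitute (\ref{NAG-SC-ODE}) in both $\tfrac{1}{2}\langle \dot X,\ddot X\rangle$ and the derivative of the mixed square. A direct computation shows that the three $\langle \nabla F(X),\dot X\rangle$ contributions cancel exactly (this is what forces the weight $(1+\sqrt{\mu s})$ in front of $F-F^\ast$), and what remains is
\begin{equation*}
\dot{\mathcal{E}} = -\sqrt{\mu}\lVert \dot X\rVert^2 - \tfrac{\sqrt{s}}{2}\langle \dot X, \nabla^2 F(X)\dot X\rangle - \sqrt{\mu}(1+\sqrt{\mu s})\langle \nabla F(X), X-x^\ast\rangle - \tfrac{\sqrt{s}(1+\sqrt{\mu s})}{2}\lVert \nabla F(X)\rVert^2.
\end{equation*}
The Hessian term is non-positive by convexity, and $\mu$-strong convexity provides $\langle \nabla F(X), X-x^\ast\rangle \geqslant F(X)-F^\ast + \tfrac{\mu}{2}\lVert X-x^\ast\rVert^2$. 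Plugging this in, expanding the mixed square inside $\tfrac{\sqrt\mu}{4}\mathcal E$, and absorbing the cross products $\langle \dot X, X-x^\ast\rangle$ and $\langle \dot X, \nabla F\rangle$ via Young's inequality with weights calibrated on $\sqrt{\mu}$ and $\sqrt{s}$, one checks term by term that $\dot{\mathcal{E}}(t) + \tfrac{\sqrt{\mu}}{4}\mathcal{E}(t) \leqslant 0$. Grönwall then yields $\mathcal{E}(t) \leqslant \mathcal{E}(0)\,e^{-\frac{\sqrt{\mu}}{4}t}$ and since $F(X)-F^\ast \leqslant \mathcal{E}(t)$, only the bound on $\mathcal{E}(0)$ remains.

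For the initialization, the specific choice $\dot X(0) = -\tfrac{2\sqrt s}{1+\sqrt{\mu s}}\nabla F(X_0)$ is what makes the constant come out small: since $s\leqslant 1/L$, one has $\mu s \leqslant 1$, so $\lVert \dot X(0)\rVert \leqslant 2\sqrt s\,\lVert \nabla F(X_0)\rVert$ and $\lVert \dot X(0) + \sqrt{s}\nabla F(X_0)\rVert \leqslant \sqrt{s}\,\lVert \nabla F(X_0)\rVert$; combined with $\lVert \nabla F(X_0)\rVert \leqslant L\lVert X_0-x^\ast\rVert$ and $F(X_0)-F^\ast \leqslant \tfrac{L}{2}\lVert X_0-x^\ast\rVert^2$ from $L$-smoothness, each of the three summands in $\mathcal{E}(0)$ is of order $L\lVert X_0-x^\ast\rVert^2 \leqslant \tfrac{1}{s}\lVert X_0-x^\ast\rVert^2$, and a careful bookkeeping yields $\mathcal E(0)\leqslant \tfrac{2}{s}\lVert X_0-x^\ast\rVert^2$. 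The main obstacle is finding the right weights in $\mathcal{E}$: the coefficient $\tfrac{1}{4}$ on the mixed square and the coefficient $(1+\sqrt{\mu s})$ in front of $F-F^\ast$ are precisely what is required so that the cross terms cancel and the $\lVert X-x^\ast\rVert^2$ coercivity produced by strong convexity is strong enough to absorb the expansion of $\tfrac{\sqrt\mu}{4}\lVert \dot X + 2\sqrt\mu(X-x^\ast) + \sqrt{s}\nabla F\rVert^2$; a naive choice loses the rate.
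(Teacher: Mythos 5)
This theorem is imported verbatim from \cite{shi2018understanding} and the paper offers no proof of its own, so the relevant comparison is with the cited proof: your Lyapunov function $(1+\sqrt{\mu s})(F-F^\ast)+\tfrac14\lVert\dot X\rVert^2+\tfrac14\lVert\dot X+2\sqrt\mu(X-x^\ast)+\sqrt{s}\nabla F(X)\rVert^2$, the cancellation of the Hessian correction via the ODE, the exact cancellation of the $\langle\nabla F,\dot X\rangle$ terms, and the Gr\"onwall step are precisely the argument of Theorem~1 there, so the approach is essentially the same and correct. The only loose point is the final bookkeeping for $\mathcal{E}(0)$ (a term-by-term bound as you sketch it gives a constant slightly larger than $2/s$; recovering the stated constant requires keeping the negative cross term $-\langle X_0-x^\ast,\nabla F(X_0)\rangle$ and the normalization by $1+\sqrt{\mu s}$), which is a matter of constants rather than of substance.
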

Theorem~\ref{thm:ode_sc} states that $\mu$-strong convexity allows for a linear convergence with rate $\bigO \left(-\sqrt{\mu}t\right)$. We extend this result when relaxing strong convexity with strong quasar convexity.
\paragraph{$F$ $(\gamma,\mu)$-strongly quasar convex} We first derive the high resolution ODE associated to Algorithm \ref{algo}.
Recall that with the choice of parameters of Theorem \ref{theorem 1}, Algorithm 1 is
 \begin{equation}\label{low res algo}\tag{NAG-SQC-DISCRETE}
    \left\{
    \begin{array}{ll}
        y_n = \frac{1}{1 + \sqrt{\mu s}} x_n + \frac{\sqrt{\mu s}}{1 + \sqrt{\mu s}}z_n \\
     x_{n+1} = y_n - s\nabla F(y_n)\\
        z_{n+1} = 1 - \gamma \sqrt{\mu s} z_n + \gamma \sqrt{\mu s}y_n - \sqrt{\frac{s}{\mu}} \nabla F(y_n)
    \end{array}
\right.
\end{equation}
The high resolution ODE associated with (\ref{low res algo})  can be written as follow:
\begin{equation}\label{NAG-SQC-ODE}\tag{NAG-SQC-ODE}
    \left(1+\frac{1-\gamma}{2}\sqrt{\mu s}\right)\ddot{X}(t) + (1+\gamma)\sqrt{\mu}\dot{X}(t) + \sqrt{s}\nabla^2 F(X(t))\dot{X}(t) + (1+\gamma\sqrt{\mu s})\nabla F(X(t)) = 0
\end{equation}
The derivation of this ODE is detailed in Appendix \ref{appendix high res}. Observe that taking $\gamma = 1$, we recover (\ref{NAG-SC-ODE}).
\paragraph{Remark}
The Hessian term will induce specific behaviour in a non convex case. If $f$ is $L$-smooth, and $\dot{X}$ is colinear to the eigenvector of $\nabla^2 F(X(t))$ associated with eigenvalue $-L$, the damping rate becomes $$((1+\gamma)\sqrt{\mu}-L \sqrt{s})\dot{X}(t).$$ Take $s = \frac{1}{L}$ and it is negative (if $\mu$ is not too close to $L$). The case of negative damping rate is unusual. In particular, it induces the increase of the total mechanical energy ($F(X(t)) -F^*+ \frac{1}{2}\lVert \dot{X}(t) \rVert^2$).
\\

Despite the eventual negative friction of (\ref{NAG-SQC-ODE}), one can get convergence results that are similar to the one obtained in the strongly convex setting \cite{siegel2021accelerated}, up to a $\gamma$ factor.
\begin{theorem}\label{prop edo haute res works}
Let $F$ be $C^2$, $(\gamma,\mu)-$strongly quasar convex and $L$-smooth for some $0 < \mu \leqslant L$, $\gamma \in (0,1]$. Assume X is solution of (\ref{NAG-SQC-ODE}) with $0 \leqslant s \leqslant \frac{1}{L}$, $X(0) = X_0$ and $\dot{X}(0) =0 $. Then:
       \begin{equation}
            F(X(t))-F^\ast \leqslant  K_0(\gamma,\mu,L,s)\frac{1}{\gamma}(F(X_0)-F^\ast)  e^{-\gamma\frac{\sqrt{\mu}}{2}t}
        \end{equation}
        where $ K_0(\gamma,\mu,L,s)$ can be uniformly bounded by $7$.
\end{theorem}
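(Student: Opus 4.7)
The plan is to design a Lyapunov function $\mathcal{E}(t)$ whose derivative along trajectories of (NAG-SQC-ODE) decays exponentially, then apply Grönwall and convert back into a bound on $F(X(t)) - F^\ast$. Following the template used by Shi et al.\ for (NAG-SC-ODE) in the strongly convex case (Theorem \ref{thm:ode_sc}), the natural candidate has three ingredients:
\[
\mathcal{E}(t) \;=\; a\,(F(X) - F^\ast) \;+\; \tfrac{b}{2}\|\dot{X}\|^2 \;+\; \tfrac{c}{2}\bigl\|\dot{X} + \gamma\sqrt{\mu}(X - x^\ast) + \sqrt{s}\,\nabla F(X)\bigr\|^2,
\]
with constants $a, b, c$ depending on $\gamma, \mu, s$. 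The $\sqrt{s}\nabla F(X)$ correction inside the third squared norm is essential: upon differentiation, it cancels the Hessian-damping term $\sqrt{s}\nabla^2 F(X)\dot{X}$ appearing in (NAG-SQC-ODE), which is precisely the mechanism that distinguishes NAG from Heavy Ball and should allow an accelerated decay even when the effective friction becomes negative.

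First I would differentiate $\mathcal{E}$ along a trajectory, using $\tfrac{d}{dt}\nabla F(X) = \nabla^2 F(X)\dot{X}$ and substituting $\bigl(1 + \tfrac{1-\gamma}{2}\sqrt{\mu s}\bigr)\ddot{X}$ from (NAG-SQC-ODE) to eliminate second derivatives. Expanding the square then produces a scalar product $\langle \nabla F(X), X - x^\ast\rangle$, which I would lower-bound by $(\gamma,\mu)$-strong quasar convexity:
\[
\langle \nabla F(X), X - x^\ast\rangle \;\geq\; \gamma\bigl(F(X) - F^\ast\bigr) + \tfrac{\gamma\mu}{2}\|X - x^\ast\|^2.
\]
All remaining terms are controlled by choosing $a,b,c$ suitably and by using $L$-smoothness to bound quadratic-in-gradient terms through $\|\nabla F(X)\|^2 \leq 2L(F(X) - F^\ast)$ (the standard consequence of the descent lemma at a minimizer). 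The restriction $s \leq 1/L$ enters exactly to ensure that the residual quadratic form in $(\dot{X},\, X - x^\ast,\, \nabla F(X))$ is non-positive. The target inequality is $\dot{\mathcal{E}}(t) \leq -\tfrac{\gamma\sqrt{\mu}}{2}\mathcal{E}(t)$, whence Grönwall yields $\mathcal{E}(t) \leq e^{-\gamma\sqrt{\mu}t/2}\,\mathcal{E}(0)$.

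To close the argument I would control $\mathcal{E}(0)$ using the initial conditions $X(0) = X_0$, $\dot{X}(0) = 0$, so that
\[
\mathcal{E}(0) \;\leq\; a\bigl(F(X_0) - F^\ast\bigr) + c\bigl(\gamma^2\mu\|X_0 - x^\ast\|^2 + s\|\nabla F(X_0)\|^2\bigr).
\]
The gradient norm is again bounded by $2L(F(X_0) - F^\ast)$, and $\|X_0 - x^\ast\|^2$ by the quadratic growth inherited from strong quasar convexity (Proposition \ref{SQC implis PL & QG}), namely $\|X_0 - x^\ast\|^2 \leq \tfrac{2(2-\gamma)}{\gamma\mu}(F(X_0) - F^\ast)$, which supplies the $\tfrac{1}{\gamma}$ factor stated in the theorem. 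Finally, $F(X(t)) - F^\ast \leq \mathcal{E}(t)/a$ yields the announced bound, and a careful bookkeeping of the constants should give the uniform upper bound $K_0 \leq 7$.

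The main obstacle I anticipate is the algebraic balancing of the coefficients $a, b, c$ in Step 2. Since quasar convexity only controls the scalar product $\langle \nabla F(X), X - x^\ast\rangle$ (and not any pairwise convex inequality between two arbitrary points), the non-convex Hessian term generated by differentiating $\mathcal{E}$ cannot simply be discarded and must be absorbed by the cross term hidden inside the squared-norm ingredient. Tuning $(a,b,c)$ so that the residual quadratic form stays non-positive uniformly in $s \in (0, 1/L]$ and $\gamma \in (0,1]$ is the delicate point; the strongly convex case $\gamma = 1$ provides a starting template, and the $\gamma$-scaling of the damping coefficient ($2\sqrt\mu \to (1+\gamma)\sqrt\mu$) appearing in the ODE suggests a matching $\gamma$-scaling inside the Lyapunov that should carry the proof through.
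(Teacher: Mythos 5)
Your overall architecture is exactly the paper's: a Lyapunov function containing a $\sqrt{s}\,\nabla F(X)$ correction inside a squared norm, differentiation along the flow with substitution of the ODE, strong quasar convexity to lower-bound $\langle \nabla F(X), X-x^\ast\rangle$, Gr\"onwall, and finally quadratic growth (Proposition \ref{SQC implis PL & QG}) plus $L$-smoothness to bound $\mathcal{E}(0)$ and produce the $\frac{1}{\gamma}$ factor. However, the one concrete parameter you commit to is the wrong one, and the step you defer as ``delicate'' is precisely where the proof lives. First, the coefficient of $X-x^\ast$ inside the squared norm should not be $\gamma\sqrt{\mu}$: after substituting the ODE, the coefficient of $\dot X$ in the bracket is $\lambda-(1+\gamma)\sqrt{\mu}$, and to obtain the decay rate $\gamma\sqrt{\mu}/2$ one must impose $\lambda-(1+\gamma)\sqrt{\mu}=-\upsilon\gamma\sqrt{\mu}$ with $\upsilon=1+\frac{1-\gamma}{2}\sqrt{\mu s}$, which forces $\lambda=\sqrt{\mu}\bigl(1+\gamma(1-\upsilon)\bigr)\approx\sqrt{\mu}$. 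The $\gamma$-scaling you read off from the damping coefficient goes into the decay rate and into the weight $\delta$ of the $F-F^\ast$ term ($\delta=\upsilon(1+2\gamma\sqrt{\mu s})$), not into $\lambda$; with $\lambda=\gamma\sqrt{\mu}$ the cross terms $\langle X-x^\ast,\dot X\rangle$ no longer cancel against $-\frac{\gamma\sqrt{\mu}}{2}\mathcal{E}$ and leave a sign-indefinite residue that your free constants $a,b,c$ are not shown to absorb. Second, you should weight $\dot X$ inside the norm by $\upsilon$ rather than $1$; otherwise the substitution of $\upsilon\ddot X$ from (\ref{NAG-SQC-ODE}) leaves a residual $(1-\upsilon)\ddot X=O(\sqrt{s})\,\ddot X$ term whenever $\gamma<1$, which you do not address.

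Two smaller points. The paper's Lyapunov has no standalone kinetic term $\tfrac{b}{2}\|\dot X\|^2$; a negative $-\gamma\tfrac{\sqrt{\mu}\upsilon^2}{2}\|\dot X\|^2$ already falls out of the computation and is simply discarded, so your extra ingredient is harmless but unnecessary. And $L$-smoothness is not used to bound $\|\nabla F\|^2$ inside $\dot{\mathcal{E}}$: the $s\|\nabla F(X)\|^2$ term is reconnected to $\mathcal{E}$ via the elementary inequality $\tfrac12\|u+v\|^2\leqslant\|u\|^2+\|v\|^2$, and there remain genuinely positive coefficients (e.g.\ $\frac12-\frac{1-\gamma}{2}\sqrt{\mu s}(\cdots)>0$) whose sign must be checked by an explicit optimization over $\gamma\in[0,1]$ --- none of which appears in your sketch. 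As written, the proposal is a correct plan of attack with its decisive computation missing and its one fixed coefficient misidentified.
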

To prove Theorem \ref{prop edo haute res works}, we aim to show the linear decrease of a Lyapunov function of the form:
    \begin{equation}
        \mathcal{E}(t) = \delta(F(X(t))-F^\ast) + \frac{1}{2}\left\lVert \left(  1 + \frac{1-\gamma}{2}\sqrt{\mu s}\right)\dot{X}(t) + \lambda(X(t) - x^\ast) + \sqrt{s}\nabla F(X(t)) \right\rVert^2.
    \end{equation} 
where $\delta$, $\lambda$ belong in $\mathbb{R}$ and are well chosen parameters. See Appendix \ref{appendix high res} for the proof.\\
The result of Theorem \ref{prop edo haute res works} is quite surprising. It indicates that despite the occurring of negative damping along the trajectory, we can still achieve similar convergence bounds as for the strongly convex case. In particular, we do not need assumptions such as $(a,L)$-curvature (Definition~\ref{def:a_l_curvature}).
 Thus, while non convexity of strongly quasar convex functions may impact negatively the convergence of Algorithm \ref{algo}, it is not the case for the associated continuous system. This indicates that non convexity impacts the discretization process. We discuss this intuition in the next section.
\paragraph{Remark:} Taking $s = 0$ in (\ref{NAG-SQC-ODE}), we can automatically deduce the low resolution ODE associated with (\ref{low res algo}).

    \subsection{The continuous/discrete rupture}\label{section continuous discrete rupture}
As showed in the previous section, one can achieve accelerated convergence with the solution of the high resolution ODE (\ref{NAG-SQC-ODE}) associated to Algorithm 1, without the need to add curvature assumption.\\
As the Algorithm and the discrete Lyapunov function we use are discretization of continuous counterparts, one expects these to be similar. More precisely, ordering term by their dependence on $s$ ($1$, $\bigO(\sqrt{s}$), $\bigO(s),\dots)$, one expects the "main terms", namely the one with lower dependence on $s$, to appear both in continuous and discrete setting, while eventually, the discretization process will make appear new terms with higher dependency on $s$. In this section, we see why it is not necessarily the case when non convexity steps in.
 \paragraph{Continuous/discrete Lyapunov comparison}
Recall the continuous Lyapunov used to prove Theorem \ref{prop edo haute res works}.
\begin{equation}\label{section 4.3 continu Lyap}
        \mathcal{E}(t) = \delta(F(X(t))-F^\ast) + \frac{1}{2}\lVert \upsilon \dot{X}(t) + \lambda(X(t) - x^\ast) + \sqrt{s}\nabla F(X(t)) \rVert^2
    \end{equation}
    where, depending on the values of $\gamma, \mu, L$, we have $\upsilon \in [1,\frac{3}{2}], \delta \in [1,3]$ and $\lambda \in [\sqrt{\mu},\frac{9}{8}\sqrt{\mu}]$ (see the exact values in Appendix \ref{appendix high res}).
    The discrete Lyapunov used to prove Theorem \ref{theorem 1} can be written the following way:
\begin{equation}\label{section 4.3 discrete Lyap}
       E_n = F(x_n) - F^\ast +  \frac{1}{2}\lVert \frac{(y_n - y_{n-1})}{\sqrt{s}}  + \sqrt{\mu}(y_n- x^\ast)  + \sqrt{s} \nabla F(y_{n-1}) \rVert^2.
\end{equation}
The fact that $\{ E_n \}_{n \in \mathbb{N}}$ is a discretization of $\mathcal{E}$ appears here clearly.
\paragraph{Derivation difference}
 In the continuous case, we studied $\dot{\mathcal{E}}$, where we aimed to show $\dot{ \mathcal{E}}(t) \leqslant -\gamma \frac{\sqrt{\mu}}{2}\mathcal{E}(t)$. We emphasize the two following facts:
 \begin{itemize}
     \item Derivation of $\delta(F(X(t))-F^\ast)$ leads to $\delta \langle \nabla F(X),\dot{X} \rangle$.
     \item Derivation of the norm term leads, among other terms, to $-\langle \nabla F(X),\dot{X} \rangle$ (up to a parameter). 
 \end{itemize}
 Appropriate parameter tuning allows to cancel these terms, see proof of Theorem~\ref{prop edo haute res works}. 
 \\
In the discrete case, we study a discrete derivation $E_{n+1} - E_n$.  Importantly, the two aforementioned derivation, that were the same in the continuous setting, will be different here.
\begin{itemize}
    \item Discrete derivation of $ F(x_{n+1}) - F^\ast$ leads to $F(x_{n+1}) - F(x_n)$.
    \item Discrete derivation of the norm term brings, among other terms, to $-\langle y_n-x_n,\nabla F(y_n)  \rangle$.
\end{itemize}
More precisely, with slight modifications of the proof of Theorem \ref{theorem 1}, we get:
  \begin{align}
     E_{n+1} &\leqslant \underbrace{(1 -\gamma\sqrt{\mu s})E_n}_{\text{Aimed linear decrease}} + (1-\gamma\sqrt{\mu s})\underbrace{\left(F(y_n) - F(x_n) + \langle \nabla F(y_n),x_n - y_n \rangle \right)}_{\text{Derivation difference}}\nonumber \\
     & \underbrace{- \gamma (1-\gamma \sqrt{\mu s})\sqrt{\frac{\mu}{s}} \lVert y_n -x_n \rVert^2}_{\leq 0}.\label{deriv diff}
\end{align}
In (\ref{deriv diff}), apart from the negative kinetic term, that can be matched with a continuous counterpart ($ -\gamma\frac{\sqrt{\mu }\upsilon^2}{2}\lVert \dot{X}(t)\rVert^2$ term in (\ref{continuous kinetic energy})), it remains the difference of the two aforementioned derivation, \textit{i.e.}
\begin{equation}
    F(y_n) - F(x_n) + \langle \nabla F(y_n),x_n - y_n \rangle.
\end{equation}
In contrast to the continuous case, we can not just use parameter tuning to control this term.
Convexity is exactly the assumption we need to get rid of this derivation difference term. However, if $F$ is $(a,L)$-curvatured with $a < 0$, the best control we have is the following:
\begin{equation}
    F(y_n) - F(x_n) + \langle \nabla F(y_n),x_n - y_n \rangle \leqslant -\frac{a}{2}\lVert x_n-y_n \rVert^2.
\end{equation} The lower below zero $a$ is, or with other words the more non convexity we allow, the more this extra term can get large, up to a $+\frac{L}{2}\lVert x_n - y_n \rVert^2$ term in the $L$-smooth case ($a = -L$). If this term was appearing in the continuous case, the same restriction on the curvature as for Theorem \ref{theorem 1} would be necessary. This highlights that when considering properties satisfied by the solution of a continuous system, the transfer of these properties to the discrete case via discretization can be hurt by non convexity.
\section{Geometrical considerations}\label{section geometrical considerations}
In this section, we create a connection with the class of smooth PL functions (Definition \ref{PL}).
Then we present some new properties of strongly quasar convex functions, together with properties concerning more general non convex functions. 
\subsection{The frontier property allowing acceleration}
Recall that a function is $\mu$-Polyak Lojasiewicz ($\mu$-PL) if there exists $\mu>0$ such that for all $x \in \mathbb{R}^d$:
\begin{equation}\label{def PL}
    \frac{1}{2 \mu}\lVert \nabla F(x) \rVert^2 \geqslant F(x) - F^\ast.
\end{equation}
As already mentioned in section \ref{section strong conv et question of acceleration}, according to \cite{PLlowerbound}, for $\mu$-PL and $L$-smooth functions we can not get the acceleration phenomenon we witness for strongly convex functions. As we know that this acceleration can occur for the class of smooth strongly quasar convex function, it is interesting for a comprehension purpose to understand what is the gap between these functions and smooth PL functions. In other words:
\begin{center}
     \textbf{What is missing for smooth PL functions to obtain acceleration ?}
\end{center}
We propose an answer in this section.  Strong quasar convexity implies uniqueness of minimizer, so for the sake of comparison we will consider the class of smooth PL functions with a unique minimizer. Note that the function built in \cite{PLlowerbound} to get the lower bound on smooth PL functions has a unique minimizer. There is thus still a point for comparison with this restricted class of smooth PL functions with unique minimizers, as gradient descent remains optimal when restricting to this class.
\begin{theorem}\label{theorem 4}
    Suppose $F$ is a $\mu$-PL, $L$-smooth function with a unique minimizer for some $0<\mu \leqslant L$. There exists $(\gamma,\mu') \in (0,1] \times \mathbb{R}^\ast_+$ such that $F$ is ($\gamma$,$\mu'$)-strongly quasar convex if and only if there exists some $a > 0$ such that F is satisfying the following \textbf{uniform acute angle condition}:
           \begin{equation}\label{UAAC CONDITION}\tag{UAAC}
             \forall x \in \mathbb{R}^d, \quad 1 \geqslant \frac{\langle \nabla F(x),x - x^\ast \rangle}{\lVert  \nabla F(x) \rVert \lVert x - x^\ast \rVert} \geqslant a> 0.
         \end{equation}
\end{theorem}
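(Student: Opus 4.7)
The plan is to prove the two implications separately, with $\mu$-PL, $L$-smoothness and uniqueness of the minimizer as standing hypotheses throughout. A preliminary observation is that these force $\nabla F(x)=0 \iff x=x^*$, so the ratio in \eqref{UAAC CONDITION} is well defined whenever $x\neq x^*$; the upper bound by $1$ is just Cauchy--Schwarz and is automatic.

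For ``SQC $\Rightarrow$ UAAC'', I would rewrite strong quasar convexity as $\langle \nabla F(x),x-x^*\rangle \geqslant \gamma(F(x)-F^*)+\tfrac{\gamma\mu'}{2}\lVert x-x^*\rVert^2$ and, since both terms on the right are non-negative, discard the first one to get $\langle \nabla F(x),x-x^*\rangle \geqslant \tfrac{\gamma\mu'}{2}\lVert x-x^*\rVert^2$. Combining this with the $L$-smoothness bound $\lVert \nabla F(x)\rVert \leqslant L\lVert x-x^*\rVert$ (which uses $\nabla F(x^*)=0$) bounds the acute-angle ratio below by $\tfrac{\gamma\mu'}{2L}$, so that $a:=\tfrac{\gamma\mu'}{2L}>0$ does the job. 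This direction is essentially immediate.

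The substantive direction is ``UAAC $\Rightarrow$ SQC''. I would begin with UAAC written as $\langle \nabla F(x),x-x^*\rangle \geqslant a\lVert \nabla F(x)\rVert\lVert x-x^*\rVert$ and then bring in the gradient-to-distance inequality $\lVert \nabla F(x)\rVert \geqslant \mu\lVert x-x^*\rVert$, obtained by chaining PL ($\lVert \nabla F\rVert^2 \geqslant 2\mu(F(x)-F^*)$) with the $\mu$-quadratic growth that PL entails. This yields $\langle \nabla F(x),x-x^*\rangle \geqslant a\mu\lVert x-x^*\rVert^2$, which is already the SQC inequality apart from a missing $F(x)-F^*$ contribution. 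To reintroduce one, I would split the right-hand side into two halves and convert one half via the smoothness upper bound $\lVert x-x^*\rVert^2 \geqslant \tfrac{2}{L}(F(x)-F^*)$. The outcome is $(\gamma,\mu')$-strong quasar convexity with $\gamma = a\mu/L$ and $\mu' = L$, and the inequalities $a\leqslant 1$, $\mu\leqslant L$ ensure $\gamma\in(0,1]$.

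The main obstacle I anticipate is precisely this splitting step. A naive attempt using PL in its original form, $\lVert \nabla F(x)\rVert \geqslant \sqrt{2\mu(F(x)-F^*)}$, produces a term of the form $\sqrt{F(x)-F^*}\,\lVert x-x^*\rVert$ that one would want to bound below by a combination $\alpha(F(x)-F^*)+\beta\lVert x-x^*\rVert^2$; but AM--GM goes the wrong way, upper bounding $\sqrt{uv}$ by $(u+v)/2$ rather than lower bounding it. Routing instead through $\lVert \nabla F\rVert \geqslant \mu\lVert x-x^*\rVert$ (which jointly uses PL and QG) kills the square root, reduces matters to a linear split of $\lVert x-x^*\rVert^2$, and lets smoothness reintroduce the function-value term; this is the real content of the equivalence.
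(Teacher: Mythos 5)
Your proof is correct and follows essentially the same route as the paper's: your inequality $\lVert \nabla F(x)\rVert \geqslant \mu\lVert x-x^\ast\rVert$ is the paper's error-bound step, your $\langle \nabla F(x),x-x^\ast\rangle \geqslant a\mu\lVert x-x^\ast\rVert^2$ is its RSI step, and your final splitting via $\lVert x-x^\ast\rVert^2\geqslant \tfrac{2}{L}(F(x)-F^\ast)$ reproduces the paper's RSI-to-SQC lemma (your choice $\gamma=a\mu/L$, $\mu'=L$ is a valid instance of its admissible range $\gamma<2a\mu/L$). The converse direction likewise matches: both drop the $F(x)-F^\ast$ term (the paper keeps it via quadratic growth, yielding a slightly better constant) and then apply $\lVert\nabla F(x)\rVert\leqslant L\lVert x-x^\ast\rVert$.
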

The proof of Theorem~\ref{theorem 4} and complements are deferred in Appendix \ref{appendix PL}, in which we give explicit parameters ($\gamma,\mu')$ depending on $a$, $\mu$ and $L$. \\
The (\ref{UAAC CONDITION}) condition can be interpreted in the following way: for all $x \in \mathbb{R}^d$, the descent direction ($-\nabla F(x)$) forms an acute angle with vector starting from $x$ to the minimizer $x^\ast$. When it does not hold, following descent direction bring us to an orthogonal or opposite direction to the one that would make us closer to the minimizer.
\paragraph{Comments on Theorem \ref{theorem 4}}
  The (\ref{UAAC CONDITION}) condition states that the momentum we accumulate is coherent, as it is directed toward the minimizer.
     For $\mu$-PL and $L$-smooth functions, that are not necessarily satisfying the (\ref{UAAC CONDITION}) condition, we know that momentum does not allow to achieve accelerated rate \cite{PLlowerbound}. Worse, in this case momentum appears to hurt the convergence rate. While the Polyak's Heavy Ball algorithm also leads to a linear convergence that is not better than gradient descent \cite{danilova2020non}, it also deteriorate as we increase momentum. Thus, the fact that (\ref{UAAC CONDITION}) does not hold can make momentum hurt the convergence speed.

\subsection{New properties of strongly quasar convex functions}
\paragraph{How weaker than strong convexity is strong quasar convexity ?}
Compared with strong convexity, the main difference is that we lose lots of \textit{local information}. While for $C^2$ $\mu$-strongly convex functions we have $ \langle \nabla^2 F(x)y,y\rangle \geqslant \mu \lVert y \rVert^2$ for all $x,y \in \mathbb{R}^d$ (or equivalently, all eigenvalues of the Hessian matrix are above $\mu$), we lose this regularity with strongly quasar convex functions. Actually, we only have, on average, a similar regularity on the segment joining points $x\in \mathbb{R}^d$ and the minimizer $x^\ast$.
\begin{proposition}
    Let $F$ be $C^2$ and $(\gamma,\mu)$-strongly quasar convex for some $\gamma \in (0,1]$, $\mu > 0$. Let $x \neq x^\ast$ and $t >0$. Then:
    \begin{align}
\frac{1}{t} \int_0^t \frac{\langle \nabla^2 F(x^\ast + s(x-x^\ast))(x^\ast - x),x^\ast - x \rangle}{\lVert x-x^\ast \rVert^2} ds> \gamma \frac{\mu}{2} .
\end{align}
\end{proposition}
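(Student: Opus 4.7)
The plan is to reduce the integral to a difference of one-dimensional derivatives via the fundamental theorem of calculus, and then apply the strong quasar convexity inequality at a single well-chosen point.

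Concretely, I would introduce the one-variable function $\phi(s) = F(x^\ast + s(x-x^\ast))$, defined on an interval containing $[0,t]$. Differentiating once and twice:
\begin{equation*}
\phi'(s) = \langle \nabla F(x^\ast + s(x-x^\ast)), x-x^\ast \rangle, \quad \phi''(s) = \langle \nabla^2 F(x^\ast + s(x-x^\ast))(x-x^\ast), x-x^\ast \rangle.
\end{equation*}
Since $\langle \nabla^2 F(\cdot)(x^\ast - x), x^\ast - x\rangle = \phi''(s)$, the integrand in the statement is exactly $\phi''(s)/\lVert x-x^\ast\rVert^2$, so by the fundamental theorem of calculus the left-hand side equals $(\phi'(t)-\phi'(0))/(t\lVert x - x^\ast\rVert^2)$.

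Next I would observe that $x^\ast$ is a global minimizer of $F$ (by Definition \ref{definition (strongly) quasar convex}) and $F$ is $C^1$, so $\nabla F(x^\ast) = 0$ and therefore $\phi'(0) = 0$. Thus the quantity to bound reduces to
\begin{equation*}
\frac{\langle \nabla F(y_t), x-x^\ast\rangle}{t\lVert x-x^\ast\rVert^2}, \quad \text{where } y_t := x^\ast + t(x-x^\ast).
\end{equation*}
Now I apply the $(\gamma,\mu)$-strongly quasar convex inequality \eqref{ineq strongly quasar convex} at the point $y_t$. Since $x^\ast - y_t = -t(x-x^\ast)$ and $\lVert x^\ast - y_t\rVert^2 = t^2 \lVert x-x^\ast\rVert^2$, rearranging gives
\begin{equation*}
\langle \nabla F(y_t), x-x^\ast\rangle \geqslant \frac{\gamma}{t}(F(y_t)-F^\ast) + \frac{\gamma\mu t}{2}\lVert x-x^\ast\rVert^2.
\end{equation*}
Dividing by $t\lVert x-x^\ast\rVert^2$ yields $\frac{\gamma(F(y_t)-F^\ast)}{t^2\lVert x-x^\ast\rVert^2} + \frac{\gamma\mu}{2}$.

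Finally, to get the strict inequality, I would use the uniqueness of the minimizer for strongly quasar convex functions (stated in Section \ref{quasar convex sction 2}, citing \cite[Appendix D, Observation 4]{hinder2023nearoptimal}). Since $t>0$ and $x\ne x^\ast$, the point $y_t$ is different from $x^\ast$, hence $F(y_t) - F^\ast > 0$, and the extra positive term forces the bound to be strictly greater than $\gamma \mu/2$. There is no genuine obstacle in this argument — the only delicate point is justifying the strict inequality, which is why invoking uniqueness of the minimizer is essential; a non-strong quasar convexity argument would only yield a non-strict bound.
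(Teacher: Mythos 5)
Your proof is correct and follows essentially the same route as the paper's: restrict $F$ to the segment through $x^\ast$ and $x$, apply the strong quasar convexity inequality at the point $x^\ast+t(x-x^\ast)$, and convert $\phi'(t)-\phi'(0)$ into the integral of $\phi''$ via the fundamental theorem of calculus (using $\nabla F(x^\ast)=0$). You are in fact slightly more careful than the paper's appendix version, which only records the non-strict inequality $\geqslant \gamma\mu/2$; your use of uniqueness of the minimizer to get $F(y_t)-F^\ast>0$ is exactly what is needed to justify the strict inequality claimed in the main-text statement.
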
See appendix \ref{appendix average strong quasar convexity} for the proof. 
\paragraph{Local strong convexity for $C^2$ functions}
One may think that the property of uniqueness of the minimizer of strongly quasar convex functions induces that when we are close enough to the minimizer, the function is bowl-shaped and we avoid negative curvature, \textit{i.e.} the function is locally convex around the minimizer. This is true if the function is $C^2$. The two following results holds under the assumption of a quadratic growth function with a unique minimizer, which is a weaker statement than strong quasar convexity (see Section \ref{quasar convex sction 2}).
\begin{proposition}
      Let $F$ be $C^2$, with a unique minimizer $x^\ast$, and a $\mu$-quadratic growth. Then, there exists $\eta >0$ such that for all $x \in B(x^\ast,\eta$), $F$ is strongly convex.
\end{proposition}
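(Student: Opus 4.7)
The strategy is to first show that the Hessian at $x^\ast$ is positive definite (at least $\mu I$), then propagate this to a neighborhood by continuity, and finally upgrade the pointwise Hessian bound to strong convexity on a convex neighborhood.

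First I would use the $C^2$ regularity together with $\nabla F(x^\ast)=0$ to apply Taylor's formula at $x^\ast$: for every unit vector $v\in S^{d-1}$ and every $t>0$,
\begin{equation*}
F(x^\ast + tv) - F^\ast = \tfrac{t^2}{2}\langle \nabla^2 F(x^\ast)v,v\rangle + o(t^2).
\end{equation*}
Combining this with the $\mu$-quadratic growth inequality evaluated at $x=x^\ast+tv$ and dividing by $t^2/2$ yields
\begin{equation*}
\langle \nabla^2 F(x^\ast)v,v\rangle + \tfrac{o(t^2)}{t^2/2}\geqslant \mu,
\end{equation*}
so letting $t\to 0$ gives $\langle \nabla^2 F(x^\ast)v,v\rangle \geqslant \mu$ for every unit vector $v$, i.e. $\nabla^2 F(x^\ast)\succeq \mu I$.

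Next, I would use continuity of $x\mapsto \nabla^2 F(x)$ (in the induced operator norm), which follows from $F\in C^2$. Since the smallest eigenvalue $\lambda_{\min}(\nabla^2 F(\cdot))$ is a continuous function of $x$ (it is $1$-Lipschitz with respect to the operator norm via Weyl's inequality), there exists $\eta>0$ such that $\lambda_{\min}(\nabla^2 F(x))\geqslant \tfrac{\mu}{2}$ for all $x\in B(x^\ast,\eta)$.

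Finally, since $B(x^\ast,\eta)$ is convex, for any $x,y\in B(x^\ast,\eta)$ the segment $[x,y]$ lies in the ball, and the second-order Taylor expansion with integral remainder
\begin{equation*}
F(y) = F(x) + \langle \nabla F(x),y-x\rangle + \int_0^1 (1-\tau)\,\langle \nabla^2 F(x+\tau(y-x))(y-x),y-x\rangle\,d\tau
\end{equation*}
together with the bound $\nabla^2 F\succeq \tfrac{\mu}{2}I$ on the ball gives
\begin{equation*}
F(y) \geqslant F(x) + \langle \nabla F(x),y-x\rangle + \tfrac{\mu}{4}\lVert y-x\rVert^2,
\end{equation*}
which is exactly the definition of $\tfrac{\mu}{2}$-strong convexity on $B(x^\ast,\eta)$.

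The only subtle point is the first step: one must verify that quadratic growth forces $\nabla^2 F(x^\ast)$ to be at least $\mu I$ rather than merely positive semidefinite. I expect this to be the mildest obstacle since it reduces to a second-order Taylor argument, and the rest is a standard continuity-plus-Taylor-remainder upgrade. No use of uniqueness of the minimizer is actually needed beyond what is already implicit in the quadratic growth estimate around $x^\ast$.
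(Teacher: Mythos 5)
Your proof is correct and follows essentially the same route as the paper's: a second-order Taylor expansion at $x^\ast$ combined with quadratic growth to get $\nabla^2 F(x^\ast)\succeq \mu I$, then continuity of the Hessian to propagate positive definiteness to a ball. Your final step (the integral-remainder argument upgrading the pointwise Hessian bound to the strong convexity inequality on the convex ball) is a welcome explicit justification of a step the paper only asserts.
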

\begin{sproof}
    The quadratic growth property around $x^\ast$ implies that $\nabla^2 F(x^\ast)$ is definite positive. By continuity of the Hessian we get the result. Rigorous proof is deferred in appendix \ref{appendix hess lip}.
\end{sproof}
\paragraph{Pathological behaviour for non $C^2$ functions}
When dropping the $C^2$ assumption, one can not ensure the local convexity property anymore, even when considering segments joining the minimizer (on which we have the stronger structure assumption). This is stated in the following result.
\begin{proposition}\label{prop contre exemple}
    One can construct $f: [0,1] \to \mathbb{R}$ strongly quasar convex with minimizer $x^\ast$, $L$-smooth, such that for all $x \neq x^\ast$, there exists $x_0 \neq x^\ast$ such that:
    \begin{equation}
        \lvert x^\ast - x_0\rvert \leq\lvert x^\ast - x \rvert \text{ and } f''(x_0) = -L
    \end{equation}
\end{proposition}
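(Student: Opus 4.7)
The plan is to place the minimizer at $x^\ast = 0$ and construct $f:[0,1]\to\R$ whose second derivative equals $+L$ almost everywhere but drops to $-L$ on a countable family of very thin, disjoint intervals accumulating at $0$. Concretely, I would set $f(0)=f'(0)=0$, fix $y_n = 2^{-n}$ and $\epsilon_n = 2^{-n}/32$, let $I_n = [y_n-\epsilon_n,\,y_n+\epsilon_n]$ and $A = \bigcup_{n \geq 1} I_n$, and define
\[
    f''(x) = \begin{cases} -L, & x \in A,\\ +L, & x \in [0,1] \setminus A. \end{cases}
\]
The spacing $2\epsilon_n = 2^{-n}/16 \ll y_n - y_{n+1} = 2^{-n-1}$ guarantees that the $I_n$ are pairwise disjoint, and since $|f''| \leq L$ almost everywhere, $f$ is $L$-smooth by construction.

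The core measure-theoretic estimate I need is $|A\cap[0,x]| \leq x/4$ for every $x \in (0,1]$. This follows from noticing that if $x \in [2^{-N}, 2^{-N+1})$, only intervals $I_n$ with $n \geq N-1$ can meet $[0,x]$, each contributing length at most $2\epsilon_n = 2^{-n}/16$, so summing the geometric tail gives $|A\cap[0,x]| \leq 3 \cdot 2^{-N}/16 \leq 3x/16$. Since $f'(x) = \int_0^x f''(t)\,dt = L\bigl(x - 2|A\cap[0,x]|\bigr)$, this yields $f'(x) \geq Lx/2 > 0$ on $(0,1]$, so $f$ is strictly increasing and $x^\ast = 0$ is the unique minimizer. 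A second integration gives the two-sided sandwich $\frac{L}{4}x^2 \leq f(x) \leq \frac{L}{2}x^2$.

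Strong quasar convexity is then almost immediate. In one dimension with $f^\ast = 0$, the defining inequality reduces to $x f'(x) \geq \gamma\bigl(f(x) + \tfrac{\mu}{2}x^2\bigr)$, and choosing $\gamma = 1/2$ and $\mu = L$ gives right-hand side $\leq \frac{L}{2}x^2$ and left-hand side $\geq \frac{L}{2}x^2$. Finally, for any $x \in (0,1]$, picking any $n$ with $2^{-n} < x$ makes $x_0 := y_n$ lie in the interior of $I_n$, so $f''(x_0) = -L$ and $0 < |x_0 - x^\ast| = 2^{-n} < x$, proving the pathological property.

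The only delicate step is the pointwise control $|A \cap [0,x]| \leq x/4$, which is really the heart of the argument: if the intervals $I_n$ were too large, $f'$ would dip below zero between consecutive $y_n$'s, destroying uniqueness of the minimizer and quasar convexity; once the geometric tail converges fast enough, everything else is a one-line quadratic verification.
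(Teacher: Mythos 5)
Your construction is correct and is essentially the paper's own argument: both place the minimizer at $0$, let $f''$ alternate between $+L$ and $-L$ on dyadic scales accumulating at $0$ with the negative-curvature set occupying a small enough fraction of $[0,x]$ to force $f'(x)\geqslant \tfrac{L}{2}x$, and then verify strong quasar convexity from the sandwich $f(x)\leqslant\tfrac{L}{2}x^2$, $xf'(x)\geqslant\tfrac{L}{2}x^2$ (the paper obtains $(\tfrac{L}{\mu+L},\mu)$-strong quasar convexity for every $\mu>0$, which at $\mu=L$ is exactly your $(\tfrac12,L)$). The only differences are cosmetic — you center the $-L$ intervals at the points $2^{-n}$ and control their total measure by a geometric tail, whereas the paper takes the last quarter of each dyadic block and computes $f'$ by explicit case analysis — so the proof stands as written.
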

\begin{sproof}
The idea behind the construction is the following: the function $f$ is build such that its curvature alternates $L$ and $-L$, given that the $L$ curvature will occur more often than $-L$ to ensure that the function grows enough to be strongly quasar convex.\\ 
To create such a function, we define the following set
\begin{equation}
    E:= \bigcup_{n\geqslant 1} \left( \underbrace{\left[\frac{1}{2^n}, \frac{1}{2^{n-1}} \right)}_{\text{partition of [0,1]}} \cap \underbrace{\left[\frac{1}{2^n} + \frac{3}{4}\frac{1}{2^n},\frac{1}{2^{n-1}} \right]}_{\text{subpart of partition}} \right) = \bigcup_{n\geqslant 1} \left[\frac{7}{4}\frac{1}{2^n}, \frac{1}{2^{n-1}} \right).
\end{equation}
Then we define $f$ on $[0,1]$, such that:
$$
f''(x) = \left\{
    \begin{array}{ll}
         -L &  \mbox{if } x \in E \\
         L & \mbox{else.}
    \end{array}
\right.
$$
with $f'(0)=f(0) = 0$. $f$ is clearly such that it will reach its lower curvature $-L$ in all vicinity of its minimizer. It remains to show that it is strongly quasar convex, which is done in Appendix \ref{appendix non convex example}.\\
\end{sproof}

    \section{Numerical experiments details}\label{appendix numerical}
\subsection{Algorithm performance}
We tested our algorithm on a function $h: \mathbb{R}^d \to \mathbb{R}$, $d = 100$, where 
\begin{equation}
    h(x) = f(\lVert x \rVert)g\left(\frac{x}{\lVert x \rVert} \right)
\end{equation}
with $f(t) = t^2$, and 
\begin{equation}
    g(x_1,\dots,x_{100}) = \sum_{i = 1}^{100} \left(a_i \sin(b_i x_i)^2 \right) + 1
\end{equation}
where $\{ a_i \}_i$ are independently and uniformly distributed on $[0,1]$ and the $\{ b_i \}_i$ are independently and uniformly distributed on $[-2.5,2.5]$. This function is $(1,2)$-strongly quasar convex, as $f$ is $(1,2)$-strongly quasar convex (see Proposition \ref{prop h is qsc}). Recall that this type of functions exhibits, by construction, lots of negative curvature. Hence, importantly, on these functions the assumption on the curvature of Theorem \ref{theorem 1} is not satisfied. 

\begin{figure}
    \centering
    
    \includegraphics[scale=0.425]{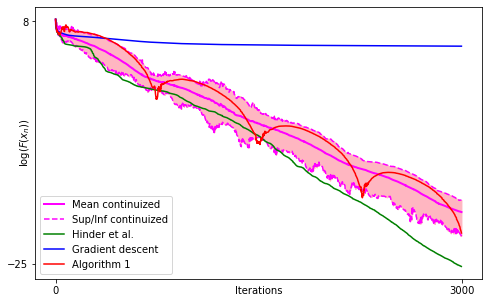}
        \includegraphics[scale=0.425]{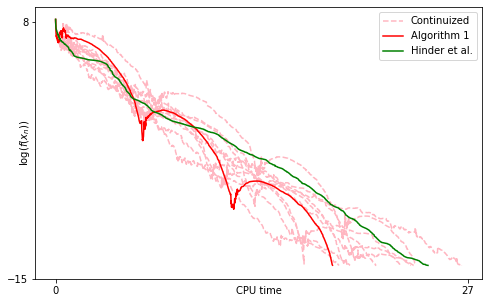}
        \includegraphics[scale=0.425]{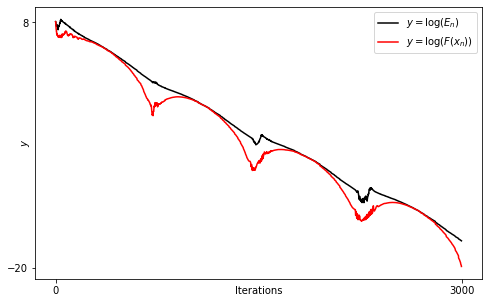}
    \caption{We compare the performance of an Algorithm using a line search procedure (Hinder et al. \cite{hinder2023nearoptimal}), a stochastic algorithm (continuized \cite{even2021continuized}), and Algorithm \ref{algo}. It is done iteration wise on the top left plot, while the top right compare the time needed to achieve a $\varepsilon$-solution. On the lowest plot we show the behaviour of Algorithm \ref{algo} with our choice of parameter in the presence of strong negative curvature regions.}
    \label{fig:2}
\end{figure}
\paragraph{Description of the experiments}
We performed numerical experiments on this function. We computed $L$ at each iteration with the same backtracking process as in \cite{hinder2023nearoptimal}. We compared the performance of Algorithm 1 using our choice of parameters with two other methods:
\begin{enumerate}
    \item Algorithm 1 with line search computing the $(\alpha_n)_n$ \cite{hinder2023nearoptimal}.
    \item Algorithm 1 with stochastic coefficients obtained using continuized framework \cite{even2021continuized, wang2023continuized}.
    \end{enumerate} As the line search procedure induces more computational complexity, we compared the performance in two different ways: firstly iteration wise, and secondly we compared the CPU time needed to achieve an $\varepsilon$-precision, \textit{i.e.} a point $\hat{x}$ such that $h(\hat{x}) - h^\ast < \varepsilon$, $\varepsilon > 0$.
    \\
    The continuized framework leads to stochastic algorithm and thus to result of convergence are of stochastic nature. Hence, to make the comparison with deterministic algorithms more relevant, we ran several times the algorithm. We make the following precision:
    \begin{itemize}
        \item For the iteration wise comparison, displayed on top left of figure \ref{fig:2}, we ran 50 times the algorithm. We plot the mean trajectory (Mean continuized), as well as the infimum and supremum of all the trajectories along the iterations (Sup/Inf continuized). The pink zone between these two plots thus contains all the trajectories.
        \item For the CPU time needed to attain a $\varepsilon$-precision, displayed on top right of figure \ref{fig:2}, we ran 10 times the algorithm. Here we simply plot 10 trajectories, corresponding each to a different run of the algorithm. We set $\varepsilon = 10^{-6}$.
    \end{itemize}
\paragraph{A note on the using of Backtracking} When using backtracking,
we compute $L_n$ such that $F(y_n - \frac{1}{L_n}\nabla F(y_n)) \leqslant F(y_n) - \frac{1}{2 L_n}\lVert  
\nabla F(y_n))  \rVert^2$. To compute $y_n$, we need $\alpha_n$, which also needs an estimate on $L$, for which we chose the previous $L_{n-1}$. Our theoretical framework does not consider the use of backtracking, and it is of interest whether our guarantees can be extended when using this procedure. 
\paragraph{Observations}
This precision being made, we can now state our observations.

\begin{enumerate}
   
    \item The two top plots displayed in figure \ref{fig:2} are not very surprising: iteration wise, the binary search procedure offers a better speed. However when considering the CPU time needed to achieve a $\varepsilon$-precision, doing without line search allows for better performance as the iterations are less computationally heavy. 
     \item We observe empirically that a high amount of strong negative curvature encountered during the running of Algorithm 1 correlate with "bad behaviour" of the algorithm. The lowest display of Figure \ref{fig:2} is characteristic of the non monotone decreasing behaviour of the Lyapunov function $E_n$ we can witness when the algorithm crosses strong negative curvature regions.
\end{enumerate}

\subsection{Explicit expression of the function displayed in Figure \ref{fig:1}}
The function displayed in Figure \ref{fig:1} is $h: \mathbb{R}^2 \to \mathbb{R}$, and is built in the following way:
\begin{equation}
    h(x) = f(\lVert x \rVert)g\left(\frac{x}{\lVert x \rVert} \right)
\end{equation}
where $f(t) = t^2$ and 
\begin{equation}
    g(x_1,x_2) = \frac{1}{4N} \sum_{i=1}^N \left(a_i \sin(b_i x_1)^2 + c_i \cos(d_i x_2)^2 \right) + 1
\end{equation}
with $N = 10$ and the $\{ a_i \}_i$, $\{ c_i \}_i$ are independently and uniformly distributed on $[0,20]$, and the $\{ b_i \}_i$, $\{ d_i \}_i$ are independently and uniformly distributed on $[-25,25]$. 

\section*{Acknowledgements}
This work was supported by PEPR PDE-AI, the ANR MICROBLIND (grant ANR-21-CE48-0008), the ANR Masdol (grant ANR-19-CE23-0017), and the ANR SOS2ID (grant ANR-24-CE40-3786).
\bibliographystyle{plain} 
\bibliography{bib}

\appendix

\section{Some properties on strongly quasar convex functions}
In this section, we prove our claims about properties of strong quasar convex functions.
\subsection{Strong convexity on average on segments joining the minimizer}\label{appendix average strong quasar convexity}
\begin{proposition*}
      Let $F$ be $(\gamma,\mu)$-strongly quasar convex. Let $x \neq x^\ast$ and $t >0$. Then we have
    \begin{align}
\frac{1}{t} \int_0^t \frac{\langle \nabla^2 F(x^\ast + s(x-x^\ast))(x^\ast - x),x^\ast - x \rangle}{\lVert x-x^\ast \rVert^2} ds \geqslant \gamma \frac{\mu}{2} 
\end{align}
\end{proposition*}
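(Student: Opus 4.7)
The plan is to reduce the integral to a boundary term via the fundamental theorem of calculus, then apply the strong quasar convexity inequality at a single, cleverly chosen point.

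First, I would introduce the one–dimensional restriction $\phi(s) := F(x^\ast + s(x-x^\ast))$ along the segment from $x^\ast$ to $x$. Since $F \in C^2$, we have
\begin{equation*}
\phi'(s) = \langle \nabla F(x^\ast + s(x-x^\ast)),\, x-x^\ast \rangle, \qquad \phi''(s) = \langle \nabla^2 F(x^\ast + s(x-x^\ast))(x-x^\ast),\, x-x^\ast \rangle.
\end{equation*}
Noting that the integrand in the statement equals $\phi''(s)/\lVert x-x^\ast\rVert^2$ (the sign flip in $(x^\ast-x)$ on both sides of the Hessian cancels), the fundamental theorem of calculus gives
\begin{equation*}
\frac{1}{t}\int_0^t \frac{\langle \nabla^2 F(\cdot)(x^\ast-x),x^\ast-x\rangle}{\lVert x-x^\ast\rVert^2}\,ds = \frac{\phi'(t)-\phi'(0)}{t\,\lVert x-x^\ast\rVert^2}.
\end{equation*}
Since $x^\ast$ is a minimizer of the differentiable function $F$, $\nabla F(x^\ast)=0$, so $\phi'(0)=0$.

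Next, I would apply the strong quasar convexity inequality \eqref{ineq strongly quasar convex} at the point $y := x^\ast + t(x-x^\ast)$. Using $x^\ast - y = -t(x-x^\ast)$ and $\lVert x^\ast - y\rVert^2 = t^2 \lVert x-x^\ast\rVert^2$, this gives
\begin{equation*}
F^\ast \geqslant F(y) - \frac{t}{\gamma}\,\phi'(t) + \frac{\mu}{2}\, t^2 \lVert x-x^\ast\rVert^2.
\end{equation*}
Rearranging and dropping the nonnegative term $F(y)-F^\ast$ yields
\begin{equation*}
\phi'(t) \geqslant \gamma\,\frac{\mu}{2}\, t\, \lVert x-x^\ast\rVert^2.
\end{equation*}

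Dividing by $t\,\lVert x-x^\ast\rVert^2$ and combining with the identity from the first step closes the proof. There is no real obstacle here: the only point to handle carefully is the sign bookkeeping inside the Hessian quadratic form and the observation that the $F(y)-F^\ast \geqslant 0$ step explains why one gets $\geqslant$ rather than a strict inequality (the strict version in the main-text statement would require noticing that, unless $y$ is itself the minimizer, $F(y) > F^\ast$, giving strict inequality for $t$ such that $y \neq x^\ast$, i.e.\ for all $t>0$ since $x\neq x^\ast$).
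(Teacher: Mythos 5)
Your proof is correct and follows essentially the same route as the paper's: restrict $F$ to the segment, use $\nabla F(x^\ast)=0$ and the fundamental theorem of calculus to write the averaged Hessian as $\phi'(t)/(t\lVert x-x^\ast\rVert^2)$, then apply strong quasar convexity at $x^\ast+t(x-x^\ast)$ and drop the nonnegative term $F(y)-F^\ast$. Your closing remark about where the strict inequality in the main-text version comes from (uniqueness of the minimizer forces $F(y)>F^\ast$ for $t>0$) is also accurate.
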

\begin{proof}
    Let $F: \mathbb{R}^d \to \mathbb{R}$ $(\gamma,\mu)$-strongly quasar convex. Define, for $t \in \mathbb{R}_+$, the function $g(t) = F(x^\ast + t(x-x^\ast))$. We have $g'(t) = \langle \nabla F(x^\ast + t(x-x^\ast)),x-x^\ast \rangle$. By strong quasar convexity of $F$, we have
\begin{align}
    &F(x^\ast + t(x-x^\ast)) + \frac{1}{\gamma}\langle \nabla F(x^\ast + t(x-x^\ast), x^\ast - (x^\ast + t(x-x^\ast) \rangle + \frac{\mu}{2}\lVert x^\ast - (x^\ast + t(x-x^\ast) \rVert^2 \leqslant F^\ast \\
    &\Rightarrow g(t) - \frac{t}{\gamma}g'(t)+ \frac{\mu t^2}{2}\lVert x -x^\ast \rVert^2  \leqslant g(0) \\
    &\Rightarrow \gamma\frac{g(t) - g(0)}{t}+ \frac{\gamma \mu t}{2}\lVert x -x^\ast \rVert^2   \leqslant g'(t) = \int_0^t g''(s)ds = \int_0^t \langle \nabla^2 F(x^\ast + s(x-x^\ast))x^\ast - x,x^\ast - x \rangle ds
\end{align}
Where for the last line we suppose $t>0$. From this we deduce:
\begin{align}
 \frac{1}{t}\int_0^t \frac{\langle \nabla^2 F(x^\ast + s(x-x^\ast))(x^\ast - x),x^\ast - x \rangle}{\lVert x-x^\ast \rVert^2} ds \geqslant \gamma \frac{\mu}{2}
\end{align}
\end{proof}
In particular, the above reasoning remains true for non strongly quasar convex functions taking $\mu = 0$, inducing a on average convexity on segments joining minimizers:
\begin{align}
 \frac{1}{t}\int_0^t \frac{\langle \nabla^2 F(x^\ast + s(x-x^\ast))(x^\ast - x),x^\ast - x \rangle}{\lVert x-x^\ast \rVert^2} ds \geq0
\end{align}
\subsection{Synthetic strongly quasar convex example proof}\label{appendix synthetic}
Let $f: \mathbb{R} \to \mathbb{R}$ such that it is $(\gamma,\mu)$-strongly quasar convex and $f(0) = 0 = f^\ast$. Let $g: S^{d-1} \to \mathbb{R}$ differentiable and such that $g(x)\geqslant 1 $ for all $x \in S^{d-1}$. We define 
\begin{equation}
    h(x) = f(\lVert x \rVert)g\left(\frac{x}{\lVert x \rVert} \right)
\end{equation}
We have $h(x) \to 0$ as $x \to 0$, we extend $h$ to $0$ by continuity defining $h(0) = 0$. We clearly have $h^\ast = h(0) = 0$.
\begin{proposition}\label{prop h is qsc}
    $h$ is $(\gamma,\mu)$-strongly quasar convex.
\end{proposition}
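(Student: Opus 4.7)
The plan is to reduce the claim to the one-dimensional strong quasar convexity of $f$, exploiting the radial structure of $h$. First I would treat the case $x \neq 0$: setting $r = \lVert x \rVert$ and $u = x/\lVert x \rVert \in S^{d-1}$, the key observation is that $h(tx) = f(tr) g(u)$ for every $t > 0$, so differentiating at $t = 1$ yields the Euler-type identity
\begin{equation*}
\langle \nabla h(x), x \rangle = r f'(r) g(u).
\end{equation*}
Since $x^\ast = 0$ and $h^\ast = 0$, the strong quasar convexity inequality to be verified for $h$ rearranges to
\begin{equation*}
\frac{1}{\gamma} r f'(r) g(u) \geq f(r) g(u) + \frac{\mu}{2} r^2.
\end{equation*}

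Next I would apply the $(\gamma, \mu)$-strong quasar convexity of $f$ at the scalar point $r$, with minimizer $0$ and $f^\ast = 0$, which rearranges to
\begin{equation*}
\frac{1}{\gamma} r f'(r) \geq f(r) + \frac{\mu}{2} r^2.
\end{equation*}
Multiplying both sides by $g(u)$, which is strictly positive since $g \geq 1$, preserves the direction of the inequality and produces $\frac{1}{\gamma} r f'(r) g(u) \geq f(r) g(u) + \frac{\mu}{2} r^2 g(u)$. Using once more $g(u) \geq 1$ together with $\mu r^2 / 2 \geq 0$ gives $\frac{\mu}{2} r^2 g(u) \geq \frac{\mu}{2} r^2$, which closes the estimate for every $x \neq 0$.

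The only step I expect to require some care is the origin $x = 0$, where the inequality is trivially $0 \geq 0$ but the statement implicitly requires $h$ to be differentiable there. Since $0$ is a minimizer of $f$, we have $f'(0) = 0$, hence $f(r) = o(r)$ as $r \to 0$; combined with the boundedness of $g$ on $S^{d-1}$ this yields $h(x) = o(\lVert x \rVert)$, so $\nabla h(0)$ exists and equals $0$. Modulo this differentiability check, the main argument is essentially a transparent consequence of the one-dimensional inequality multiplied by the positive angular factor $g(u)$; no non-convex subtlety enters, because along each radial direction $h$ is just a positive rescaling of $f$.
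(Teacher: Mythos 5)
Your argument is correct, and it takes a genuinely different route from the paper. The paper proves the claim by decomposition: it writes $h(x) = \bigl(h(x) - f(\lVert x\rVert)\bigr) + f(\lVert x\rVert)$, invokes Appendix D.3 of Hinder et al.\ to assert that $f(\lVert x\rVert)\bigl(g(x/\lVert x\rVert)-1\bigr)$ is $\gamma$-quasar convex (using that the angular factor is nonnegative and $C^1$), and then uses closure of (strong) quasar convexity under addition of a quasar convex and a strongly quasar convex function with the same quasar point. You instead give a direct, self-contained computation: the degree-zero homogeneity of the angular part yields the Euler identity $\langle \nabla h(x), x\rangle = r f'(r) g(u)$, after which the $d$-dimensional inequality is literally the one-dimensional inequality for $f$ at $r$ multiplied by $g(u) \geq 1$, plus the harmless discard of $\frac{\mu}{2}r^2(g(u)-1) \geq 0$. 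Your approach buys transparency and avoids any external citation; it also makes explicit two points the paper glosses over, namely that the radial extension $x \mapsto f(\lVert x\rVert)$ inherits strong quasar convexity (which itself needs the same Euler identity) and that $h$ is differentiable at the origin, which you correctly settle via $f(r) = o(r)$ and the boundedness of $g$ on the compact sphere. The paper's approach buys modularity: the decomposition lemma handles more general perturbations of $f(\lVert x\rVert)$ in one stroke and is reusable. Both are valid; yours is arguably the more complete proof as written.
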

\begin{proof}
We have
\begin{equation*}
    h(x) - f(\lVert x \rVert) = f(\lVert x \rVert)(g\left(\frac{x}{\lVert x \rVert} \right)-1).
\end{equation*}
As $g\left(\frac{x}{\lVert x \rVert} \right)-1$ is nonnegative and $C^1$, following Appendix D.3 \cite{hinder2023nearoptimal}, we have that $h(x) - f(\lVert x \rVert)$ is $\gamma$-quasar convex. Thus $h(x) = h(x) - f(\lVert x \rVert) +  f(\lVert x \rVert)$ is a sum of a $\gamma$-quasar convex and a $\mu$-strongly quasar convex function, which is $\mu$-strongly quasar convex (see \cite{hinder2023nearoptimal} Appendix D.3).

\end{proof}
\subsection{Local strong convexity around the minimizer for $C^2$ functions}\label{appendix hess lip}
\begin{proposition*}
    Let $F$ be $C^2$, with a unique minimizer $x^\ast$, with $\mu$-quadratic growth. There exists $\eta >0$ such that for all $x \in B(x^\ast,\eta$), $F$ is strongly convex.
\end{proposition*}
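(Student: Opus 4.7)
The plan is to combine quadratic growth with a second-order Taylor expansion at the minimizer to show that $\nabla^2 F(x^\ast)$ is positive definite, and then use the continuity of the Hessian to transfer this positivity to a whole neighborhood of $x^\ast$.

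First, I would observe that since $x^\ast$ is a minimizer of a $C^2$ function, $\nabla F(x^\ast) = 0$, so Taylor's theorem yields for any direction $v \in \mathbb{R}^d$ and small $t>0$:
\begin{equation*}
F(x^\ast + tv) - F^\ast = \frac{t^2}{2}\langle \nabla^2 F(x^\ast) v, v\rangle + o(t^2\lVert v\rVert^2).
\end{equation*}
Combining this with the $\mu$-quadratic growth inequality $F(x^\ast+tv)-F^\ast \geqslant \frac{\mu}{2}t^2\lVert v\rVert^2$, dividing by $t^2$ and letting $t \to 0$ gives $\langle \nabla^2 F(x^\ast) v, v\rangle \geqslant \mu \lVert v\rVert^2$ for every $v$. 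Hence $\nabla^2 F(x^\ast) \succeq \mu I$, so in particular $\nabla^2 F(x^\ast)$ is positive definite with smallest eigenvalue at least $\mu$.

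Next, since $F$ is $C^2$, the map $x \mapsto \nabla^2 F(x)$ is continuous (in the operator norm induced by the Euclidean norm). By standard perturbation bounds (e.g. Weyl's inequality), the smallest eigenvalue $\lambda_{\min}(\nabla^2 F(x))$ depends continuously on $x$. In particular, choosing $\mu' = \mu/2 > 0$, there exists $\eta > 0$ such that for every $x \in B(x^\ast,\eta)$ we have $\lambda_{\min}(\nabla^2 F(x)) \geqslant \mu'$, i.e. $\nabla^2 F(x) \succeq \mu' I$.

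Finally, since $B(x^\ast,\eta)$ is a convex set, and $F$ has a Hessian uniformly bounded below by $\mu' I$ on it, the standard characterization of strong convexity by the Hessian applies to the restriction $F_{|B(x^\ast,\eta)}$: for any $x,y \in B(x^\ast,\eta)$, writing $\varphi(t) = F(x + t(y-x))$ and integrating twice, one obtains the strong convexity inequality \eqref{def strongly conv} with constant $\mu'$. I do not foresee any substantial obstacle in this proof: the only delicate point is making sure that the neighborhood can be taken convex (which is automatic for a Euclidean ball) so that the Hessian lower bound genuinely upgrades to strong convexity on the whole neighborhood; this is why we pass through $\nabla^2 F \succeq \mu' I$ rather than only $\succ 0$ pointwise.
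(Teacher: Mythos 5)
Your proposal is correct and follows essentially the same route as the paper: a second-order Taylor expansion at $x^\ast$ combined with the quadratic growth inequality to obtain $\nabla^2 F(x^\ast)\succeq \mu I$, then continuity of the Hessian to propagate a uniform lower bound $\mu'I$ to a ball around $x^\ast$. Your final remark on the convexity of the ball, which lets the pointwise Hessian bound integrate into the strong convexity inequality, is a slightly more careful conclusion than the paper's, but the argument is the same.
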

\begin{proof}
\textbf{Step 1: $\nabla^2 F(x^\ast)$ is definite positive}
    We start by showing that $\nabla^2 F(x^\ast)$ is definite positive, \textit{i.e.} for all $x\in \mathbb{R}^d \backslash \{ 0 \}$ we have $\langle \nabla^2 F(x^\ast)x,x\rangle>0$.
    Let $g(h) = F(x^\ast + h(x-x^\ast))$, $h\geq0$ and $x \neq x^\ast$. We perform an order 2 Taylor development at $0$ of $g$:
    \begin{align}
        &g(h) = g(0) + hg'(0) + \frac{h^2}{2}g''(0) + o(h^2)\\
        \Leftrightarrow &F(x^\ast + h(x-x^\ast) = F^\ast + h\underbrace{\langle \nabla F(x^\ast),x-x^\ast \rangle}_{=0} + \frac{h^2}{2}\langle \nabla^2 F(x^\ast)(x-x^\ast),x-x^\ast \rangle + o(h^2)\\
        \Leftrightarrow &F(x^\ast + h(x-x^\ast) - F^\ast = \frac{h^2}{2}\langle \nabla^2 F(x^\ast)(x-x^\ast),x-x^\ast \rangle + o(h^2)
    \end{align}
  As $F$ is with $\mu$-quadratic growth, we have:
\begin{equation}
    F(x) - F^\ast \geqslant \frac{\mu}{2}\lVert x - x^\ast \rVert^2
\end{equation}
We thus have
\begin{align}
    &\frac{h^2}{2}\langle \nabla^2 F(x^\ast)(x-x^\ast),x-x^\ast \rangle + o(h^2) \geqslant \frac{\mu h^2 }{2}\lVert x - x^\ast \rVert^2 \\
    \Rightarrow &\frac{\langle \nabla^2 F(x^\ast)(x-x^\ast),x-x^\ast \rangle}{\lVert x - x^\ast \rVert^2} + \frac{2}{\lVert x-x^\ast \rVert^2}\frac{o(h^2)}{h^2} \geqslant \mu
\end{align}
Taking $h \to 0$, we get 
\begin{align}\label{mu-defined positive}
    \frac{\langle \nabla^2 F(x^\ast)(x-x^\ast),x-x^\ast \rangle}{\lVert x - x^\ast \rVert^2} \geqslant \mu
\end{align}
Taking $x = y+x^\ast$, we get that for all $y \in \mathbb{R}^d \backslash \{0\}$, we have $\langle \nabla^2 F(x^\ast)y,y \rangle\geqslant \mu\lVert y \rVert^2 > 0$. We showed that $\nabla^2 F(x^\ast)$ is definite positive.
\textbf{Step 2: extension in a local vicinity}
We showed in step 1 that all eigenvalues of the Hessian matrix evaluated at $x^\ast$ are strictly positive. As we assumed $\nabla^2 F$ is continuous, for all $\varepsilon$ such that $0 < \varepsilon < \mu$, there exists $\eta >0$ such that for all $x \in B(x^\ast,\eta)$, the eigenvalues of $\nabla^2 F(x)$ are above $\mu-\varepsilon$. This means that on this ball $F$ is strongly convex.
\end{proof}
\subsection{No local convexity for non $C^2$ functions: a non convex pathological constructions}\label{appendix non convex example}
We construct a function that exhibit pathological non convex behaviour around their minimizer. This function is defined on $\mathbb{R}$, so this pathological behaviour does not need several dimensions to happen.
\subsubsection{Proof of proposition \ref{prop contre exemple}}
\begin{proposition*}
    One can construct $f: [0,1] \to \mathbb{R}$ strongly quasar convex with minimizer $x^\ast$, $L$-smooth, such that for all $x \neq x^\ast$, there exists $x_0 \neq x^\ast$ such that:
    \begin{equation}
        \lvert x^\ast - x_0\rvert \leq\lvert x^\ast - x \rvert \text{ and } f''(x_0) = -L
    \end{equation}
\end{proposition*}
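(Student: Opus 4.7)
My plan is to verify the three properties of $f$ in turn. That $f$ is well-defined and $L$-smooth is immediate: since $|f''|\le L$ almost everywhere, $f'(x)=\int_0^x f''(s)\,ds$ is $L$-Lipschitz and $f\in C^1([0,1])$. By integrating $f''$ explicitly, one computes $f'(\frac{1}{2^n})=\frac{L}{2^{n+1}}$ (telescoping the net per-block change $+\frac{L}{2\cdot 2^n}$), and on the two subintervals of each dyadic block one gets $f'(x)=Lx-\frac{L}{2^{n+1}}$ on $[\frac{1}{2^n},\frac{7}{4\cdot 2^n}]$ and $f'(x)=\frac{3L}{2^n}-Lx$ on $[\frac{7}{4\cdot 2^n},\frac{1}{2^{n-1}}]$. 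Both expressions are strictly positive, so $f$ is strictly increasing on $(0,1]$ and $x^\ast=0$ is its unique minimum with $f(0)=0$. The pathological property itself is immediate from the construction: for any $x\in(0,1]$, pick $n$ with $x\in[\frac{1}{2^n},\frac{1}{2^{n-1}})$ and take $x_0=\frac{7}{4\cdot 2^{n+1}}\in E$, giving $x_0<\frac{1}{2^n}\le x$ and $f''(x_0)=-L$.

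The substantial part is strong quasar convexity, which, since $x^\ast=0$, reduces (after dividing by $x^2>0$) to
\begin{equation*}
\frac{1}{\gamma}\cdot\frac{f'(x)}{x}\;\ge\;\frac{f(x)}{x^2}+\frac{\mu}{2},\qquad x\in(0,1],
\end{equation*}
with the case $x=0$ being trivial. The key idea is to exploit self-similarity: $E$ is invariant under $x\mapsto 2x$ on $(0,\frac{1}{2}]$, so $f''(2x)=f''(x)$ almost everywhere there. The function $g(x):=4f(x/2)$ then satisfies $g''=f''$ and $g(0)=g'(0)=0$, hence $g\equiv f$, yielding the functional equation $f(x)=4f(x/2)$. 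Consequently both ratios $f'(x)/x$ and $f(x)/x^2$ are invariant under $x\mapsto x/2$, so it suffices to bound them on the single dyadic block $[\frac{1}{2},1]$.

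On that block, a direct computation using the explicit linear/quadratic expressions for $f'$ and $f$ yields $\frac{L}{2}\le \frac{f'(x)}{x}\le \frac{5L}{7}$ and $\frac{3L}{10}\le \frac{f(x)}{x^2}\le \frac{7L}{22}$, which by the scaling identity extend to all of $(0,1]$. Taking $\gamma=1$, the reduced inequality becomes $\frac{L}{2}\ge \frac{7L}{22}+\frac{\mu}{2}$, satisfied for every $\mu\le \frac{4L}{11}$; hence $f$ is $(1,\frac{4L}{11})$-strongly quasar convex. The main obstacle I anticipate is producing uniform-in-scale bounds on $f(x)/x^2$ as $x\to 0^+$, which a priori could blow up or degenerate; the scaling identity $f(x)=4f(x/2)$ resolves this by collapsing the infinite cascade of dyadic scales to a single bounded-interval computation.
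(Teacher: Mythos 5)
Your proof is correct, and while the preliminary parts (the explicit piecewise formulas for $f'$, strict monotonicity, and the fact that $-L$ curvature occurs in every neighbourhood of $x^\ast=0$) match what the paper does, your treatment of strong quasar convexity takes a genuinely different route. The paper's argument is cruder: it shows $f'(x)\geqslant \frac{L}{2}x$ by integrating $f''$ over the sets $E_n$ and bounds $f(x)\leqslant \frac{L}{2}x^2$ simply from $f''\leqslant L$, then combines the two to conclude that $f$ is $\left(\frac{L}{\mu+L},\mu\right)$-strongly quasar convex for every $\mu>0$ — note that this always yields $\gamma<1$. You instead exploit the dyadic self-similarity of $E$ to derive the functional equation $f(x)=4f(x/2)$, which makes both $f'(x)/x$ and $f(x)/x^2$ invariant under halving and collapses the infinite cascade of scales to a single block $[\frac12,1]$; the sharp bounds $\frac{L}{2}\leqslant f'(x)/x\leqslant \frac{5L}{7}$ and $\frac{3L}{10}\leqslant f(x)/x^2\leqslant \frac{7L}{22}$ (I checked these — they are exactly the extrema, attained at $x=\frac58$ and $x=\frac{11}{12}$ respectively) then give $(1,\frac{4L}{11})$-strong quasar convexity. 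Your approach buys a quantitatively stronger conclusion ($\gamma=1$ and an explicit optimal-order $\mu$) at the cost of the extra scaling lemma and a slightly heavier computation on the reference block; the paper's approach is shorter and needs only the one-sided bounds, but cannot reach $\gamma=1$ because it replaces $f(x)$ by the loose majorant $\frac{L}{2}x^2$. Both are valid proofs of the stated proposition.
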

\begin{proof}
Let 
\begin{equation}
    E:= \bigcup_{n\geqslant 1} \left( \underbrace{\left[\frac{1}{2^n}, \frac{1}{2^{n-1}} \right[}_{\text{partition of [0,1]}} \cap \underbrace{\left[\frac{1}{2^n} + \frac{3}{4}\frac{1}{2^n},\frac{1}{2^{n-1}} \right]}_{\text{subpart of partition}} \right) = \bigcup_{n\geqslant 1} \left[\frac{7}{4}\frac{1}{2^n}, \frac{1}{2^{n-1}} \right[ 
\end{equation}
and 
\begin{equation}
    E_n:= \bigcup_{k \geqslant n+1} \left[\frac{7}{4}\frac{1}{2^k}, \frac{1}{2^{k-1}} \right[ 
\end{equation}
Let $f$ be a function defined on $[0,1]$, such that:
$$
f''(x) = \left\{
    \begin{array}{ll}
         -L &  \mbox{if } x \in E \\
         L & \mbox{else.}
    \end{array}
\right.
$$
We suppose $f'(0)=f(0) = 0$. $f$ is clearly such that it will reach its lower curvature $-L$ in all vicinity of its minimizer. We now want to show that $f$ is strongly quasar convex.\\
Suppose $x$ is such that $\exists k\geqslant 1$, $x = \frac{1}{2^k}$. Then,
\begin{align}
    f'(x) = f'(x) - f'(0) = \int_{[0,x]} f''(s)ds = \int_{E_n} f''(s)ds + \int_{[0,x]\setminus E_n} f''(s)ds
\end{align}
By definition of $f''$, this simply becomes 
\begin{align}
    f'(x) = -L\lambda(E_n ) + L\lambda([0,x]\setminus E_n) = L(x - 2 \lambda(E_n))
\end{align}
Where $\lambda(.)$ is the Lebesgue measure. But by construction $\lambda(E_n) = \frac{1}{4}x$, which means 
\begin{equation}
    f'(x) = \frac{L}{2}x
\end{equation}
Now suppose $ x = \frac{1}{2^k} + \varepsilon$, where $k\geqslant 1 $ and $0<\varepsilon \leq\frac{3}{4}\frac{1}{2^k} $. This time we get 
\begin{align}
     f'(x) =  \int_{[0,x]} f''(s)ds =  \int_{[0,\frac{1}{2^k}]} f''(s)ds + \int_{[\frac{1}{2^k},x]} f''(s)ds = \frac{L}{2}\frac{1}{2^k} + L\varepsilon \geqslant \frac{L}{2}(\frac{1}{2^k} + \varepsilon) = \frac{L}{2}x
\end{align}
Finally, suppose $x = \frac{1}{2^n} + \frac{3}{4}\frac{1}{2^n} + \varepsilon $, where $n \geqslant 1$ and $0<\varepsilon < \frac{1}{4}\frac{1}{2^n}$.
\begin{align}
     f'(x) =  \int_{[0,x]} f''(s)ds =  \int_{[0,\frac{1}{2^n} + \frac{3}{4}\frac{1}{2^n}]} f''(s)ds + \int_{[\frac{1}{2^n} + \frac{3}{4}\frac{1}{2^n},x]} f''(s)ds = \frac{L}{2}\frac{1}{2^n} +  \frac{3L}{4}\frac{1}{2^n} - L\varepsilon \geqslant \frac{L}{2^n}
\end{align}
But here $x \leqslant \frac{1}{2^{n-1}}$, which gives 
\begin{align}
    f'(x) \geqslant \frac{L}{2} \frac{1}{2^{n-1}} = \frac{L}{2}x
\end{align}
Finally, for all $x \in [0,1]$, we have $f'(x) \geqslant \frac{L}{2}x$. To prove that this function is strongly quasar convex, remark by definition of $f''$ that $f(x) \leqslant \frac{L}{2}x^2$, and using $f'(x) \geqslant \frac{L}{2}x$, we have that for any $\mu > 0$:
\begin{align}
    f(x)-\frac{\mu+L}{L}f'(x)x + \frac{\mu}{2}x^2 \leqslant \frac{L}{2}x^2-\frac{\mu+L}{2}x^2 + \frac{\mu}{2}x^2=0 = f^\ast
\end{align}
In words we showed that $f$ is $(\frac{L}{\mu + L},\mu)-$strongly quasar convex. 
\end{proof}
\section{A PL-Strongly quasar convex link}\label{appendix PL}
 In this section we characterize the difference between smooth PL functions (Definitions \ref{def L-smooth} and \ref{definition PL} respectively) and smooth, strongly quasar convex functions. We use relation of $\mu$-PL functions with intermediate conditions that we can relate to strongly quasar convex functions.


\paragraph{Remark 1}
We will introduce geometrical conditions that can hold considering projection onto the set of minimizers. As we aim to show a link with strong quasar convexity, we will restrict ourselves to functions with a unique minimizer. This means that some of the definitions we will introduce are specifically here restricted to this unique minimizer case.
\\
\\
For the first lemma, we introduce the following condition.
\begin{definition}[Error Bound]\label{EB}
$F: \mathbb{R}^d \mapsto \mathbb{R}$ is $\theta$-Error Bound ($\theta$-$EB$) if
\begin{align*}
   \forall x \in \mathbb{R}^d, \quad \lVert \nabla F(x) \rVert \geqslant \theta \lVert x- x^\ast \rVert
\end{align*}
\end{definition}
\begin{lemma}\label{lemma PL-EB}
    A $\mu$-PL function is $\mu$-EB. A $\theta$-EB and $L$-smooth function is $\frac{\theta}{L}$-PL.
\end{lemma}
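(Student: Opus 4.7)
The plan is to prove the two implications separately, each by combining the respective defining inequality with a known quadratic control around the (unique) minimizer $x^\ast$ made available by Remark 1.

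For the first implication ($\mu$-PL implies $\mu$-EB), I would begin by invoking the well-known fact, already used in the main text and cited as \cite{garrigos2023square}, that a $\mu$-PL function satisfies $\mu$-quadratic growth: for all $x \in \R^d$,
\[
F(x) - F^\ast \geqslant \frac{\mu}{2}\lVert x - x^\ast \rVert^2.
\]
Substituting this lower bound into the PL inequality $\lVert \nabla F(x) \rVert^2 \geqslant 2\mu (F(x) - F^\ast)$ yields $\lVert \nabla F(x) \rVert^2 \geqslant \mu^2 \lVert x - x^\ast \rVert^2$, which after taking square roots is precisely the $\mu$-EB condition.

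For the second implication ($\theta$-EB plus $L$-smooth implies PL), I would use the upper quadratic bound provided by $L$-smoothness at the minimizer. Since $\nabla F(x^\ast) = 0$, applying the upper part of inequality \eqref{def L-smooth} between $x^\ast$ and $x$ gives $F(x) - F^\ast \leqslant \frac{L}{2}\lVert x - x^\ast \rVert^2$. Combining this with the $\theta$-EB bound $\lVert \nabla F(x) \rVert^2 \geqslant \theta^2 \lVert x - x^\ast \rVert^2$, one obtains
\[
F(x) - F^\ast \leqslant \frac{L}{2\theta^2}\lVert \nabla F(x) \rVert^2,
\]
which is the PL inequality with the constant stated in the lemma (modulo the square of $\theta$, which comes from the square in the EB definition once carried through).

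There is no real conceptual obstacle: both directions reduce to a short chain of inequalities once the correct auxiliary quadratic control is at hand. The only ingredient that is itself nontrivial is the PL-to-quadratic-growth passage used in the first step, traditionally proved by integrating the PL inequality along the gradient flow trajectory; here, however, it can simply be invoked as a black box via the reference already cited in the paper, so it does not require a dedicated argument.
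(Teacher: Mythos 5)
Your argument is correct in substance, and it is necessarily a different route from the paper's, because the paper's entire ``proof'' of this lemma is the single sentence ``It is shown in \cite{karimi2020linear}''. For the first implication, chaining the PL inequality with the $\mu$-quadratic growth of $\mu$-PL functions (which the paper itself invokes as a black box from \cite{garrigos2023square}) gives $\lVert \nabla F(x)\rVert^2 \geqslant 2\mu(F(x)-F^\ast) \geqslant \mu^2\lVert x-x^\ast\rVert^2$, hence $\mu$-EB; this is clean, and it is worth being aware that all the real work is hidden in the quadratic-growth step, since PL $\Rightarrow$ EB is not a pointwise algebraic fact but requires an integration along the gradient flow, which is exactly what the cited result encapsulates.

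For the second implication your computation is also the standard one, but you should not wave away the constant: the descent lemma at the minimizer plus $\theta$-EB yields $F(x)-F^\ast \leqslant \frac{L}{2\theta^2}\lVert\nabla F(x)\rVert^2$, which is the $\frac{\theta^2}{L}$-PL inequality, not the $\frac{\theta}{L}$-PL inequality. The constant $\frac{\theta}{L}$ printed in the lemma is in fact dimensionally inhomogeneous (a PL constant must carry the same units as $L$ and $\theta$, whereas $\frac{\theta}{L}$ is a pure ratio), so it is a typo in the statement, and $\frac{\theta^2}{L}$ is the constant appearing in \cite{karimi2020linear}. Since the paper only uses the first implication quantitatively (in the proof of Theorem \ref{theorem PL}), the typo is harmless downstream; but your proof establishes the corrected statement, and you should say so explicitly rather than describing the discrepancy as being ``modulo the square of $\theta$''.
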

 It is shown in \cite{karimi2020linear}.
We now introduce the notion of RSI functions \cite{RSIorigin}.
\begin{definition}\label{defRSI}
$F: \mathbb{R}^d \mapsto \mathbb{R}$ is $\nu$-RSI if
\begin{align*}
    \langle \nabla F(x), x - x^\ast \rangle \geqslant \nu \lVert x - x^\ast \rVert^2,\quad \forall x \in \mathbb{R}^d
\end{align*}
\end{definition}
Using Cauchy Schwartz, we immediately see that $\nu$-RSI implies $\nu$-EB.
We show that up to the a supplementary condition, the converse also holds.
    \begin{lemma}\label{lemma EB-RSI}
         Suppose $F$ is satisfying the following \textbf{uniform acute angle condition}:
         \begin{equation}\label{hyp1}\tag{UAAC}
             \forall x \in \mathbb{R}^d, \quad 1 \geqslant \frac{\langle \nabla F(x),x - x^\ast \rangle}{\lVert  \nabla F(x) \rVert \lVert x - x^\ast \rVert} \geqslant a > 0
         \end{equation}
         then
         \begin{equation*}
             F \text{ is }\theta\text{-}EB \Rightarrow F \text{ is }\theta a\text{-}RSI
         \end{equation*}
              \end{lemma}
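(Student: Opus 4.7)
The plan is to combine the two hypotheses by a direct chain of inequalities. The UAAC gives a lower bound on the cosine between $\nabla F(x)$ and $x - x^*$, which (after clearing the normalizing denominators) yields
\[
\langle \nabla F(x), x-x^*\rangle \;\geqslant\; a\,\lVert \nabla F(x)\rVert\,\lVert x-x^*\rVert,
\]
and the $\theta$-EB hypothesis lets one replace the factor $\lVert \nabla F(x)\rVert$ by $\theta \lVert x-x^*\rVert$, producing exactly the RSI conclusion $\langle \nabla F(x), x-x^*\rangle \geqslant \theta a\lVert x-x^*\rVert^2$.

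The only subtlety is that UAAC is stated as a ratio, so one must dispose of the cases where the denominator vanishes. If $x = x^*$, both sides of the desired RSI inequality are zero, so the conclusion is trivial. If $x \neq x^*$ but $\nabla F(x) = 0$, the EB hypothesis $\lVert \nabla F(x)\rVert \geqslant \theta \lVert x-x^*\rVert$ with $\theta > 0$ forces $x = x^*$, contradiction; hence whenever $x \neq x^*$ we have $\nabla F(x) \neq 0$ and the UAAC ratio is well defined. I would begin the proof by making this dichotomy explicit, then carry out the two-line chain of inequalities described above.

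There is no substantive obstacle here: the lemma is essentially a definitional unfolding, and the only care needed is the nonvanishing of the denominators that appear in the UAAC condition. The role of this lemma in the broader argument is to prepare the composition EB $\Rightarrow$ RSI, which will then feed into the final identification of (\ref{UAAC CONDITION}) as the missing ingredient between PL and strong quasar convexity in Theorem~\ref{theorem 4}.
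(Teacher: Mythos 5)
Your proof is correct and follows essentially the same chain of inequalities as the paper: combine the UAAC lower bound on the cosine with the $\theta$-EB bound on $\lVert \nabla F(x)\rVert$ to obtain $\langle \nabla F(x), x-x^*\rangle \geqslant \theta a \lVert x-x^*\rVert^2$. Your explicit treatment of the degenerate cases $x = x^*$ and $\nabla F(x)=0$ is a small but welcome addition that the paper's one-line proof leaves implicit.
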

         \begin{proof}
             We have
             \begin{align}
                  \theta \lVert x - x^\ast \rVert \overset{\theta-(EB)}{\leq} \lVert \nabla F(x) \rVert \overset{(\ref{hyp1})}{\leq} \frac{\langle \nabla F(x),x - x^\ast \rangle}{a \lVert x - x^\ast \rVert} \Rightarrow \theta a \lVert x - x^\ast \rVert^2 \leqslant \langle \nabla F(x),x - x^\ast \rangle
             \end{align}
         \end{proof}
The following is the last intermediate result.
\begin{lemma}\label{lemma QSC-RSI}
    Let $F$ be $L$-smooth. Then:
    \begin{enumerate}
        \item ($F$ is $(\gamma,\mu)$-strongly quasar convex) $\Rightarrow$ ($F$ is  $\frac{\gamma \mu}{2-\gamma}$-RSI)
        \item ($F$ is $\nu$-RSI) + ($\gamma < \frac{2\nu}{L}$) $\Rightarrow$ ($F$ is $\left(\gamma,\frac{\nu}{\gamma} - \frac{L}{2} \right)$-strongly quasar convex)
    \end{enumerate} 
\end{lemma}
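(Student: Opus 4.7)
The plan is to prove each implication by a direct manipulation of the defining inequalities, since both directions follow from combining the assumptions with one standard consequence (quadratic growth for the forward direction, the descent lemma at the minimizer for the converse).

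For part (1), I would start from the $(\gamma,\mu)$-strong quasar convexity inequality and rearrange it as
\[
\frac{1}{\gamma}\langle \nabla F(x), x-x^\ast\rangle \;\geqslant\; (F(x)-F^\ast) + \frac{\mu}{2}\lVert x-x^\ast\rVert^2.
\]
Using only $F(x)-F^\ast\geqslant 0$ yields the weaker $\frac{\gamma\mu}{2}$-RSI, so to recover the claimed constant $\frac{\gamma\mu}{2-\gamma}$ I would invoke Proposition \ref{SQC implis PL & QG}(2), which gives $F(x)-F^\ast \geqslant \frac{\gamma\mu}{2(2-\gamma)}\lVert x-x^\ast\rVert^2$. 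Plugging this in and simplifying $\frac{\gamma}{2-\gamma}+1 = \frac{2}{2-\gamma}$ produces exactly $\langle \nabla F(x), x-x^\ast\rangle \geqslant \frac{\gamma\mu}{2-\gamma}\lVert x-x^\ast\rVert^2$, i.e.\ $\frac{\gamma\mu}{2-\gamma}$-RSI.

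For part (2), the goal is to produce the strong quasar convexity inequality, which rewrites as
\[
F(x)-F^\ast \;\leqslant\; \frac{1}{\gamma}\langle\nabla F(x),x-x^\ast\rangle - \frac{\mu'}{2}\lVert x-x^\ast\rVert^2
\]
with $\mu'=\frac{\nu}{\gamma}-\frac{L}{2}$. Since $x^\ast$ is a differentiable minimizer, $\nabla F(x^\ast)=0$, and the $L$-smooth descent lemma applied from $x^\ast$ to $x$ yields the clean upper bound $F(x)-F^\ast \leqslant \frac{L}{2}\lVert x-x^\ast\rVert^2$. Bounding the scalar product below via $\nu$-RSI by $\frac{\nu}{\gamma}\lVert x-x^\ast\rVert^2$ reduces the problem to the scalar inequality $\frac{\nu}{\gamma}-\frac{\mu'}{2}\geqslant\frac{L}{2}$, equivalently $\mu'\leqslant \frac{2\nu}{\gamma}-L$. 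The choice $\mu'=\frac{\nu}{\gamma}-\frac{L}{2}$ is exactly half of $\frac{2\nu}{\gamma}-L$, hence admissible, and the assumption $\gamma < \frac{2\nu}{L}$ is precisely what guarantees $\mu'>0$ so that the resulting inequality qualifies as strong (not merely quasar) convexity.

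No step looks like a serious obstacle; the only subtlety is remembering that the naive estimate $F(x)-F^\ast\geqslant 0$ is too weak in part (1), so one must bring in the quadratic growth consequence of strong quasar convexity to reach the sharp constant $\frac{\gamma\mu}{2-\gamma}$. The symmetric observation in part (2) is that the $L$-smooth upper bound on the suboptimality, obtained by exploiting $\nabla F(x^\ast)=0$, is precisely the quantity to match against the RSI lower bound in order to close the calculation.
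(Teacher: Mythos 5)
Your proof is correct and follows essentially the same route as the paper: part (1) combines the strong quasar convexity inequality with the $\frac{\gamma\mu}{2-\gamma}$-quadratic growth of Proposition \ref{SQC implis PL & QG}, and part (2) matches the RSI lower bound on the scalar product against the descent-lemma upper bound $F(x)-F^\ast\leqslant\frac{L}{2}\lVert x-x^\ast\rVert^2$ at the minimizer. Your side remark that the argument actually yields the larger constant $\frac{2\nu}{\gamma}-L$ (of which the stated $\mu'=\frac{\nu}{\gamma}-\frac{L}{2}$ is half) is accurate — the paper's own computation produces the coefficient $\frac{\nu}{\gamma}-\frac{L}{2}$ in front of $\lVert x-x^\ast\rVert^2$ without the factor $\frac{1}{2}$ required by the definition, so it too proves the stronger statement and records the weaker one.
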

\begin{proof}
    \item 
    \paragraph{Point 1.}
 Using definition of strong quasar convexity, and the fact that it implies $\frac{\mu \gamma}{2-\gamma}$ quadratic growth (Proposition \ref{SQC implis PL & QG}, 2.), we have:
    \begin{align}
         \langle \nabla F(x),x - x^\ast \rangle \geqslant \gamma( F-F^\ast) + \frac{\gamma  \mu}{2}\lVert x-x^\ast \rVert^2 &\geqslant \frac{\gamma^2 \mu}{2(2-\gamma)}\lVert x-x^\ast \rVert^2 +\frac{\gamma \mu}{2} \lVert x-x^\ast \rVert^2\\
         &= \frac{\gamma \mu}{2-\gamma} \lVert x-x^\ast \rVert^2
    \end{align}
    \item 
    \paragraph{Point 2.}We start with the definition of $\nu$-RSI:
\begin{align}
    \langle \nabla F(x), x-x^\ast  \rangle \geqslant \nu \lVert x-x^\ast \rVert^2 &\Rightarrow 0 \geqslant \frac{1}{\gamma} \langle \nabla F(x),x^\ast - x \rangle +\frac{\nu}{\gamma} \lVert x-x^\ast \rVert^2 \\
    &\Rightarrow -\frac{L}{2}\lVert x-x^\ast \rVert^2 \geqslant \frac{1}{\gamma} \langle \nabla F(x),x^\ast - x \rangle +\left(\frac{\nu}{\gamma} - \frac{L}{2} \right)\lVert x-x^\ast \rVert^2
\end{align}
Where $\gamma \in (0,1]$ is to be precised.
$L$-smooth property implies $F(x)-F^\ast \leqslant \frac{L}{2}\lVert x-x^\ast \rVert^2$, thus we have:
\begin{align}
    F^\ast \geqslant F(x) +\frac{1}{\gamma} \langle \nabla F(x),x^\ast - x \rangle +\left(\frac{\nu}{\gamma} - \frac{L}{2} \right)\lVert x-x^\ast \rVert^2
\end{align}
Hence, choosing $\gamma$ such that $\frac{\nu}{\gamma} - \frac{L}{2} >0\Rightarrow \gamma < \frac{2\nu}{L}$, we have that $F$ is $\left(\gamma,\frac{\nu}{\gamma} - \frac{L}{2} \right)$-strongly quasar convex.
\end{proof}
Note that we did not really need gradient Lipschitz property to hold, rather a weaker upper quadratic growth condition  (this also holds for Lemma \ref{lemma PL-EB}):
\begin{equation}
    \frac{L_0}{2}\lVert x - \hat{x} \rVert^2 \geqslant F(x) - F^\ast
\end{equation}
Importantly, $L_0$ may be significantly lower that $L$. See \cite{guilleescuret2020study} for a discussion about upper conditions.\\

With all these lemmas, we are ready to state our result.
\begin{theorem}\label{theorem PL}
    Let $F$ be $L$-smooth and $\mu$-PL with a unique minimizer. Then we have that there exists $\gamma,\mu' > 0$ such that $F$ is $(\gamma,\mu')$-strongly quasar convex if and only if for some $a \in (0,1]$, $F$ satisfies the following \textbf{uniform acute angle condition}:
         \begin{equation}\label{hyp}\tag{UAAC}
             \forall x \in \mathbb{R}^d, \quad 1 \geqslant \frac{\langle \nabla F(x),x - x^\ast \rangle}{\lVert  \nabla F(x) \rVert \lVert x - x^\ast \rVert} \geqslant a > 0
         \end{equation} 
     In particular if (\ref{hyp}) holds, then as long as $\gamma < \frac{2 \mu a}{L}$, $F$ is $(\gamma, \frac{\mu a}{\gamma} - \frac{L}{2})$-strongly quasar convex.
\end{theorem}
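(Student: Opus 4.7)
The plan is to prove the equivalence by chaining together the three intermediate lemmas already established in this section (Lemmas \ref{lemma PL-EB}, \ref{lemma EB-RSI}, and \ref{lemma QSC-RSI}), which collectively relate the notions PL, EB, RSI and strong quasar convexity. The core idea is that the UAAC hypothesis is precisely what is needed to upgrade the scalar-gradient control provided by PL/EB (which only sees the norm $\lVert\nabla F(x)\rVert$) into the directional control required for RSI, and then for strong quasar convexity.

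For the reverse implication, I would proceed in three steps. First, Lemma \ref{lemma PL-EB} directly yields that a $\mu$-PL function is $\mu$-EB, so $\lVert\nabla F(x)\rVert\geqslant \mu\lVert x-x^\ast\rVert$ for every $x$. Second, applying Lemma \ref{lemma EB-RSI} with the UAAC constant $a$ transforms the $\mu$-EB inequality into the RSI inequality $\langle \nabla F(x), x-x^\ast\rangle \geqslant \mu a \lVert x-x^\ast\rVert^2$. Third, invoking the second point of Lemma \ref{lemma QSC-RSI} with $\nu = \mu a$, one concludes that for any $\gamma$ satisfying $\gamma < \tfrac{2\mu a}{L}$, $F$ is $(\gamma, \tfrac{\mu a}{\gamma}-\tfrac{L}{2})$-strongly quasar convex, which is exactly the parameter statement appearing at the end of the theorem.

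For the forward implication, the $(\gamma,\mu')$-strong quasar convexity of $F$ gives by the first point of Lemma \ref{lemma QSC-RSI} the inequality $\langle\nabla F(x),x-x^\ast\rangle \geqslant \tfrac{\gamma\mu'}{2-\gamma}\lVert x-x^\ast\rVert^2$. On the other hand, $L$-smoothness combined with $\nabla F(x^\ast)=0$ yields $\lVert\nabla F(x)\rVert \leqslant L\lVert x-x^\ast\rVert$. Dividing the RSI bound by $\lVert\nabla F(x)\rVert\lVert x-x^\ast\rVert$ (for $x\neq x^\ast$; the case $x=x^\ast$ is trivial or excluded by the formulation) and using the smoothness upper bound on $\lVert\nabla F(x)\rVert$ produces
\begin{equation*}
\frac{\langle\nabla F(x),x-x^\ast\rangle}{\lVert\nabla F(x)\rVert\lVert x-x^\ast\rVert}\;\geqslant\;\frac{\gamma\mu'}{L(2-\gamma)},
\end{equation*}
so UAAC holds with $a=\tfrac{\gamma\mu'}{L(2-\gamma)}\in(0,1]$ (the upper bound $1$ is just Cauchy--Schwarz).

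I do not expect a serious obstacle here, since the heavy lifting has been pushed into the three intermediate lemmas. The only delicate point is bookkeeping: one must check that the UAAC constant $a$ derived in the forward direction lies in $(0,1]$ and is strictly positive (which is immediate from $\gamma>0$, $\mu'>0$ and Cauchy--Schwarz), and, in the reverse direction, that the range $\gamma < \tfrac{2\mu a}{L}$ is non-empty and that the resulting strong quasar convexity constant $\tfrac{\mu a}{\gamma}-\tfrac{L}{2}$ is positive, which is precisely what the inequality $\gamma < \tfrac{2\mu a}{L}$ guarantees.
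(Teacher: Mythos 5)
Your proposal is correct and follows essentially the same route as the paper: the chain PL $\Rightarrow$ EB $\Rightarrow$ (with UAAC) RSI $\Rightarrow$ strongly quasar convex via Lemmas \ref{lemma PL-EB}, \ref{lemma EB-RSI} and \ref{lemma QSC-RSI} for one direction, and strong quasar convexity $\Rightarrow$ RSI plus the smoothness bound $\lVert\nabla F(x)\rVert\leqslant L\lVert x-x^\ast\rVert$ for the other. If anything, your forward implication is written out more carefully than the paper's (which dismisses it as "obvious"), since you make explicit that $L$-smoothness is what converts the RSI inequality into a uniform lower bound on the cosine.
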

\begin{proof}
Let $F$ be $L$-smooth and $\mu$-PL. We have by Lemma \ref{lemma PL-EB} that $F$ is $\mu$-EB. By Lemma \ref{lemma EB-RSI} $F$ is $\mu a$-RSI.
By Lemma \ref{lemma QSC-RSI}, $F$ is $\left( \gamma, \frac{\mu a}{\gamma} - \frac{L}{2} \right)$-strongly quasar convex as long as $\gamma > \frac{2 \mu a}{L}$.
 Obviously, if there exists $x \neq x^\ast$ such that (\ref{UAAC CONDITION}) does not hold, then RSI can not hold. As strong quasar convexity implies RSI (Lemma \ref{lemma QSC-RSI}), this is a necessary condition for theorem \ref{theorem PL} to hold.
\end{proof}
Finally, we show that strong quasar convexity implies the PL condition.
\begin{proposition}
    Let $F$ be $(\gamma,\mu)$-strongly quasar convex. It is then $\mu \gamma^2$-PL.
\end{proposition}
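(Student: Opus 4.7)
The plan is to derive the PL inequality directly from the strongly quasar convex inequality by a single application of Young's inequality, engineered so that the $\frac{\mu}{2}\|x-x^*\|^2$ term cancels exactly.

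First I would rewrite the defining inequality \eqref{ineq strongly quasar convex} of $(\gamma,\mu)$-strong quasar convexity in the form
\begin{equation*}
F(x)-F^\ast \;\leqslant\; \frac{1}{\gamma}\langle \nabla F(x),\, x-x^\ast\rangle \;-\; \frac{\mu}{2}\lVert x-x^\ast\rVert^2,
\end{equation*}
valid for every $x\in\R^d$, so that the task reduces to bounding the inner product purely in terms of $\lVert \nabla F(x)\rVert^2$ (plus something that kills the remaining quadratic term).

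Next I would bound the inner product via Cauchy--Schwarz and then split it with Young's inequality $ab\leqslant \frac{a^2}{2c}+\frac{c\, b^2}{2}$. The natural choice is $a=\frac{1}{\gamma}\lVert\nabla F(x)\rVert$, $b=\lVert x-x^\ast\rVert$, and $c=\mu$, which gives
\begin{equation*}
\frac{1}{\gamma}\langle \nabla F(x),\, x-x^\ast\rangle \;\leqslant\; \frac{1}{\gamma}\lVert\nabla F(x)\rVert\,\lVert x-x^\ast\rVert \;\leqslant\; \frac{1}{2\gamma^2\mu}\lVert\nabla F(x)\rVert^2 \;+\; \frac{\mu}{2}\lVert x-x^\ast\rVert^2.
\end{equation*}
Plugging this into the previous display, the two $\frac{\mu}{2}\lVert x-x^\ast\rVert^2$ contributions cancel, leaving exactly $F(x)-F^\ast \leqslant \frac{1}{2\gamma^2\mu}\lVert\nabla F(x)\rVert^2$, which is the $\mu\gamma^2$-PL inequality as required.

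There is essentially no obstacle: the only non-trivial design choice is picking the Young parameter $c=\mu$ so that the unwanted quadratic term is neutralized, and this choice is forced by the structure of the strong quasar convex inequality. This also explains why the resulting PL constant scales as $\mu\gamma^2$ rather than $\mu$: the factor $\frac{1}{\gamma}$ in front of the inner product gets squared when Young's inequality transfers it onto the gradient norm.
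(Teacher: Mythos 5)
Your proof is correct and is essentially the paper's argument in a different guise: the paper completes the square by writing $\frac{1}{\gamma}\langle\nabla F(x),x^\ast-x\rangle+\frac{\mu}{2}\lVert x-x^\ast\rVert^2$ as $\frac{1}{2}\lVert\frac{1}{\gamma\sqrt{\mu}}\nabla F(x)+\sqrt{\mu}(x^\ast-x)\rVert^2-\frac{1}{2\gamma^2\mu}\lVert\nabla F(x)\rVert^2$ and drops the nonnegative square, which is exactly your Cauchy--Schwarz plus Young step with parameter $c=\mu$. Same cancellation, same constant, no gap.
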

\begin{proof}

We have 
\begin{align}
    \frac{1}{2}\lVert \frac{1}{\gamma \sqrt{\mu}}\nabla F(x) + \sqrt{\mu}(x^\ast - x) \rVert^2 = \frac{1}{2\gamma^2 \mu}\lVert \nabla F(x) \rVert^2 + \frac{1}{\gamma}\langle \nabla F(x),x^\ast - x \rangle + \frac{\mu}{2} \lVert x-x^\ast \rVert^2
\end{align}
Writting the definition of $(\gamma,\mu)$-strong quasar convexity, we have
\begin{align}
F^\ast &\geqslant F(x) + \frac{1}{\gamma} \langle \nabla F(x),x^\ast - x \rangle + \frac{\mu}{2} \lVert x-x^\ast \rVert^2\\
&= F(x) + \frac{1}{2}\lVert \frac{1}{\gamma \sqrt{\mu}}\nabla F(x) + \sqrt{\mu}(x^\ast - x) \rVert^2 -\frac{1}{2\gamma^2 \mu}\lVert \nabla F(x) \rVert^2\\
    \Rightarrow &\frac{1}{2\gamma^2 \mu}\lVert \nabla F(x) \rVert^2 \geqslant F(x)-F^\ast
\end{align}
\end{proof}
One can find a summary of the previous discussion in figure \ref{fig:graphs}.
\begin{figure}[!ht]
\centering
\resizebox{0.5\textwidth}{!}{%
\begin{circuitikz}
\tikzstyle{every node}=[font=\LARGE]
\node [font=\LARGE] at (14.75,16.5) {(SQC)};
\node [font=\LARGE] at (8,8.25) {(PL)};
\node [font=\LARGE] at (14.75,12.5) {(RSI)};
\node [font=\LARGE] at (14.75,8.25) {(EB)};
\draw [line width=1pt, ->, >=Stealth] (13.5,15.75) -- (8.75,9.25);
\draw [line width=1pt, ->, >=Stealth] (14.75,15.75) -- (14.75,13.25);
\draw [line width=1pt, ->, >=Stealth] (14.75,11.5) -- (14.75,9);
\draw [line width=1pt, ->, >=Stealth] (9,8.25) -- (13.75,8.25);
\draw [ color={rgb,255:red,138; green,226; blue,52}, line width=1pt, ->, >=Stealth, dashed] (15.5,9) .. controls (16,10.25) and (16,10.25) .. (15.5,11.5)node[pos=0.5,right, fill=white]{if (UAAC)};
\draw [ color={rgb,255:red,239; green,41; blue,41}, line width=1pt, ->, >=Stealth, dashed] (15.5,13.25) .. controls (16,14.5) and (16,14.5) .. (15.5,15.75)node[pos=0.5,right, fill=white]{if L-smooth};
\draw [ color={rgb,255:red,239; green,41; blue,41}, line width=1pt, ->, >=Stealth, dashed] (13.75,7.75) .. controls (11.5,7.25) and (11.25,7.25) .. (9,7.75)node[pos=0.5,below, fill=white]{if L-smooth};
\end{circuitikz}
}%
\caption{Summary of the Lemmas of Appendix \ref{appendix PL}. See Definition \ref{definition (strongly) quasar convex} for SQC (strongly quasar convex), Definition \ref{defRSI} for RSI, Definition \ref{EB} for EB (error bound), and Definition \ref{PL} for PL (Polyak-{\L}ojasiewcz). Solid lines are implications that hold without the need of adding another assumption. Red dashed lines are implications that hold under $L$-smooth assumption, while the green dashed line is for implication holding under the (\ref{UAAC CONDITION}) condition.}
\label{fig:graphs}
\end{figure}
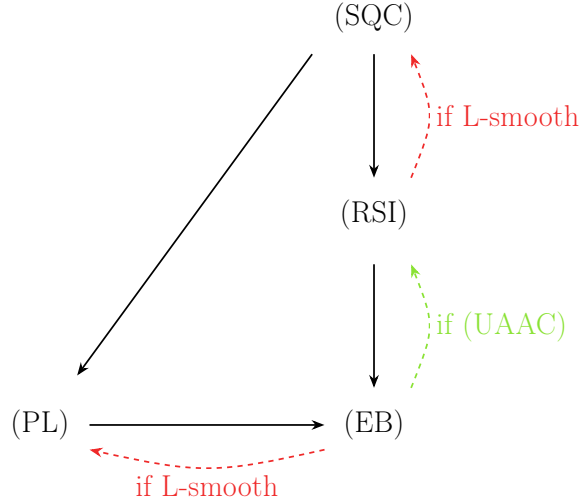
\section{Proofs of section 3}\label{appendix section 3}
\subsection{Differentiable strongly quasar convex}\label{appendix diff sqc}
\off{\subsubsection{Gradient descent}\label{appendix GD}
\begin{proposition*}
    Let $F$ be a $L$-smooth and $(\gamma,\mu)$-strongly quasar convex function. Let $(x_n)_{n \in \mathbb{N}}$ be generated by \ref{gradient descent} with stepsize $s \leqslant \frac{1}{L}$. Then:
   \begin{equation}
      \forall k\in \N,~  F(x_n) - F^\ast \leqslant \frac{2}{\gamma}(1 - \gamma\mu s)^k(F(x_0) - F^\ast).
    \end{equation}
\end{proposition*}
\begin{proof}
Let $x^*$ be a quasar convex point of $F$ and:
\begin{equation}
        E_n = F(x_n) - F^\ast + \frac{\mu}{2}\lVert x_n - x^\ast \rVert^2
    \end{equation}
We compute
\begin{align}
    E_{n+1} - E_n &= F(x_{n+1})-F(x_n) + \frac{\mu}{2}\lVert x_{n+1} - x^\ast \rVert^2 - \frac{\mu}{2}\lVert x_n - x^\ast \rVert^2\\
    &=F(x_{n+1})-F(x_n) -\mu s \langle x_n - x^\ast,\nabla F(x_n) \rangle + s^2 \frac{\mu}{2} \lVert \nabla F(x_n) \rVert^2.
\end{align}
Applying the descent lemma \ref{def L-smooth}, the $L$-smoothness implies that: $F(x_{n+1}) - F(x_n) \leqslant -\frac{s}{2}\lVert \nabla F(x_n) \rVert^2$ provided that $s\leqslant \frac{1}{L}$. Combined with the $(\gamma,\mu)$-strongly quasar convexity to control the scalar product, we get:
\begin{align}
    E_{n+1} - E_n &\leq-\frac{s}{2}\lVert \nabla F(x_n) \rVert^2- \gamma\mu s (F(x_n)-F^\ast) -\gamma s \frac{\mu^2}{2}\lVert x_n - x^\ast \rVert^2 + s^2 \frac{\mu}{2} \lVert \nabla F(x_n) \rVert^2\\
    &= \frac{s}{2}\left( \mu s-1 \right)\lVert \nabla F(x_n) \rVert^2 - \gamma \mu s \left( F(x_n) - F^\ast + \frac{\mu}{2}\lVert x_n - x^\ast \rVert^2 \right)
\end{align}
as $s \leqslant \frac{1}{L}$ the first term is negative, inducing 
\begin{equation}
    E_{n+1} - E_n \leqslant -\gamma \mu s E_n \Rightarrow E_{n+1} \leqslant (1-\gamma \mu s)E_n.
\end{equation}
By induction, we then deduce: 
 \begin{equation}
    \forall n\in \N,~    F(x_n) - F^\ast \leqslant (1 - \gamma\mu s)^n(F(x_0) - F^\ast + \frac{\mu}{2}\lVert x_0 - x^\ast \rVert^2).
    \end{equation}
   Since $(\gamma,\mu)$-quasar strong convexity implies $\frac{\gamma \mu}{2-\gamma}$-quadratic growth (corollary 1 \cite{hinder2023nearoptimal}), \textit{i.e.}
\begin{equation}
    F(x) - F^\ast \geqslant \frac{\gamma \mu }{2(2-\gamma)}\lVert x - x^\ast \rVert^2,
\end{equation}
from which we finally deduce
   \begin{equation}
    \forall n\in \N,~    F(x_n) - F^\ast \leqslant \frac{2}{\gamma}(1 - \gamma\mu s)^n(F(x_0) - F^\ast).
    \end{equation}
\end{proof}
}

\subsubsection{Proof of Theorem \ref{theorem 1}}\label{appendix proof theorem 1}\label{appendix theorem 1}
In this section, we detail the proof of Theorem~\ref{theorem 1} whose statement is recalled here: let $F$ be a $(\gamma,\mu)$-strongly quasar convex function for some $(\gamma,\mu)\in (0,1]\times \R_+^*$ having a $(\rho,L)$ curvature for some $L>0$ and $\rho\leqslant L$. Let $(x_n)_{n \in \mathbb{N}}$ generated by Algorithm \ref{algo}:
\begin{eqnarray*}
y_n &=& \alpha_nx_n + (1- \alpha_n)z_n\\
x_{n+1} &=& y_n - s\nabla F(y_n)\\
z_{n+1} &=& \beta_nz_n + (1 - \beta_n)y_n - \eta_n \nabla F(y_n)
\end{eqnarray*}
with parameters 
\begin{equation*}
    s\leqslant \frac{1}{L}, \quad\alpha_n = \frac{1}{1 + \sqrt{\mu s}}:=\alpha,\quad \beta_n = 1 - \gamma \sqrt{\mu s}:=\beta,\quad\eta_n = \frac{\sqrt{s}}{\sqrt{\mu }}:=\eta.
\end{equation*}
Assuming that $\rho \geqslant -\gamma \sqrt{\frac{\mu}{s}}$ we want to prove that:
        \begin{equation}
 \forall n\in \N, ~   F(x_{n}) - F^\ast \leqslant \frac{2}{\gamma} \left( 1 - \gamma \sqrt{\mu s} \right)^n(F(x_0)-F^\ast)\label{th1:result}
    \end{equation}
where $F^*=\min~F$. Let $x^*$ be the unique minimizer of $F$. Let be the following Lyapunov energy:
\begin{equation}
    E_n = F(x_n) - F^\ast + \frac{\mu}{2}\lVert z_n - x^\ast \rVert^2
\end{equation}
The main idea of the proof consists in finding parameters such that the following inequality holds
\begin{equation}
    E_{n+1} - E_n \leqslant cE_n
\end{equation}
with $c<0$ being as small as possible. We will then deduce the convergence rate \eqref{th1:result} by induction.

\paragraph{Step 1.} Since:
 \begin{align}
    E_{n+1} - E_n &= F(x_{n+1}) - F(x_n) + \frac{\mu}{2}\lVert z_{n+1} - x^\ast \rVert^2 - \frac{\mu}{2}\lVert z_n - x^\ast \rVert^2,
\end{align}
let us start by considering the right term:
\begin{eqnarray*}
\Delta_n &=& \lVert z_{n+1} - x^\ast \rVert^2 - \lVert z_n - x^\ast \rVert^2\\
&=& \lVert \beta z_n + (1 - \beta)y_n - \eta \nabla F(y_n) - x^\ast\rVert^2- \lVert z_n - x^\ast \rVert^2\\
&=& (\beta^2-1)\|z_n-x^*\|^2 +(1-\beta)^2\|y_n-x^*\|^2 +\eta^2\|\nabla F(y_n)\|^2 \\
&& +2 \beta  \langle z_n - x^\ast,(1-\beta)(y_n-x^\ast) - \eta \nabla F(y_n) \rangle - 2(1-\beta)\eta \langle \nabla F(y_n),y_n-x^*\rangle
\end{eqnarray*}
by construction of Algorithm~\ref{algo}. The tricky part here is to control the first scalar product: using the definition of Algorithm~\ref{algo}, we can rewrite $z_n = y_n + \frac{\alpha}{1-\alpha}(y_n -x_n)$, and thus
\begin{eqnarray*}
    &&\langle z_n - x^\ast,(1-\beta)(y_n-x^\ast) - \eta \nabla F(y_n) \rangle\\
   & &= \langle y_n - x^\ast,(1-\beta)(y_n-x^\ast) - \eta \nabla F(y_n) \rangle + \frac{\alpha}{1-\alpha}\langle y_n -x_n,(1-\beta)(y_n-x^\ast) - \eta \nabla F(y_n) \rangle\\
   & &= (1-\beta)\lVert y_n - x^\ast \rVert^2 - \eta\langle y_n -x^\ast ,\nabla F(y_n) \rangle - \frac{\alpha}{1-\alpha}\eta\langle y_n -x_n,\nabla F(y_n) \rangle\\
   & &+ \frac{\alpha}{1- \alpha}(1-\beta) \langle y_n -x_n, y_n -x^\ast \rangle
\end{eqnarray*}
Applying the relation $2\langle a,b\rangle = \|a+b\|^2-\|a\|^2-\|b\|^2$ to $a =y_n-x^*$ and $b=\frac{\alpha}{1-\alpha}(y_n-x_n)$, we get:
\begin{align}\label{CV GAP eq 2}
 \frac{\alpha}{1-\alpha}\langle y_n -x_n,y_n-x^\ast \rangle = \frac{1}{2}\lVert z_n - x^\ast \rVert^2 -  \frac{1}{2}\left(\frac{\alpha}{1-\alpha}\right)^2 \lVert y_n -x_n \rVert^2 -  \frac{1}{2}\lVert y_n -x^\ast \rVert^2,
\end{align}
so that:
\begin{eqnarray*}
    &&\langle z_n - x^\ast,(1-\beta)(y_n-x^\ast) - \eta \nabla F(y_n) \rangle\\
   & &= \frac{1-\beta}{2}\left(\lVert z_n - x^\ast \rVert^2 + \lVert y_n - x^\ast \rVert^2 - \left(\frac{\alpha}{1-\alpha}\right)^2 \lVert y_n -x_n \rVert^2 \right)- \eta\langle y_n -x^\ast ,\nabla F(y_n) \rangle\\
   & &- \frac{\alpha}{1-\alpha}\eta\langle y_n -x_n,\nabla F(y_n) \rangle
\end{eqnarray*}
and 
\begin{eqnarray*}
\Delta_n &=& -(1-\beta)\|z_n-x^*\|^2 +(1-\beta)\|y_n-x^*\|^2 +\eta^2\|\nabla F(y_n)\|^2 \\
&& -\beta(1-\beta)\left(\frac{\alpha}{1-\alpha}\right)^2\|y_n-x_n\|^2 -2\frac{\alpha\beta\eta}{1-\alpha}\langle \nabla F(y_n),y_n -x_n \rangle- 2\eta \langle \nabla F(y_n),y_n-x^*\rangle.
\end{eqnarray*}
Reinjecting $\Delta_n$ in the expression of $E_{n+1}-E_n$ and by definition of the Lyapunov energy $E_n$, we get:
\begin{align}
E_{n+1} - E_n &= -(1-\beta) E_n + F(x_{n+1}) - F^* -\beta\left(F(x_n)-F^*\right) + \frac{\mu}{2}(1-\beta)\|y_n-x^*\|^2 +\frac{\mu}{2}\eta^2\|\nabla F(y_n)\|^2\nonumber\\
& -\frac{\mu}{2}\beta(1-\beta)\left(\frac{\alpha}{1-\alpha}\right)^2\|y_n-x_n\|^2 -\frac{\alpha\beta\eta\mu}{1-\alpha}\langle \nabla F(y_n),y_n -x_n \rangle- \mu\eta \langle \nabla F(y_n),y_n-x^*\rangle.\label{proof:step1}
\end{align}

\paragraph{Step 2.} Let us now prove that for any $n\in \N$, we have: $E_{n+1}-E_n \leqslant -(1-\beta) E_n$ for some well-chosen values of the parameters $\beta$, $\eta$ and $\alpha$. 

Remember that $F$ is assumed strongly quasar convex, hence:
\begin{equation} \label{derivation difference proof 2}
\forall n\in \N,~\langle \nabla F(y_n),y_n-x^*\rangle \geqslant \gamma(F(y_n)-F^*) +\frac{\gamma\mu}{2}\|y_n-x^*\|^2
\end{equation}
and $L$-smooth which induces that:
\begin{equation} \label{derivation difference proof 3}
\forall s\leqslant \frac{1}{L},~\forall n\in \N,~\frac{s}{2}\|\nabla F(y_n)\|^2 \leqslant F(y_n)-F(x_{n+1})
\end{equation}
Reinjecting these two inequalities into $E_{n+1}-E_n$, we then get:
\begin{align}
     E_{n+1} - E_n &\leqslant -(1-\beta) E_n + F(x_{n+1}) - F^* -\beta\left(F(x_n)-F^*\right) + \frac{\mu}{s}\eta^2 ( F(y_n) - F(x_{n+1}))\nonumber\\
     & - \frac{\mu \beta (1-\beta)}{2}\left(\frac{\alpha}{1-\alpha}\right)^2 \lVert y_n -x_n \rVert^2 -  \frac{\alpha\beta\eta\mu}{1-\alpha} \langle \nabla F(y_n), y_n - x_n  \rangle  -\eta\mu\gamma (F(y_n)-F^*)\nonumber \\
     & +\frac{\mu}{2}(1-\beta - \gamma \eta \mu) \lVert y_n -x^\ast \rVert^2 \nonumber\\
     &\leqslant -(1-\beta)E_n + (  \frac{\mu}{s}\eta^2-\gamma \mu \eta)(F(y_n)-F^\ast) +(1 - \frac{\mu}{s}\eta^2) (F(x_{n+1})-F^\ast)\nonumber  \\
     &- \beta(F(x_n) - F^\ast) -  \frac{\alpha\beta\eta\mu}{1-\alpha} \langle \nabla F(y_n),y_n - x_n \rangle - \frac{\mu \beta (1-\beta)}{2}\left(\frac{\alpha}{1-\alpha}\right)^2 \lVert y_n -x_n \rVert^2 \label{derivation difference proof 1} \\
     &+\frac{\mu}{2}(1-\beta - \gamma \eta \mu) \lVert y_n -x^\ast \rVert^2\nonumber
\end{align}
Choosing now $\eta=\sqrt\frac{s}{\mu}$ and $\beta=1 -\gamma\eta\mu=1-\gamma\sqrt{\mu s}$ to cancel out the terms in $F(x_{n+1})-F^*$  and $\|y_n-x^*\|^2$, we deduce:
       \begin{align}
     E_{n+1} - E_n &\leqslant  -\gamma\sqrt{\mu s}E_n + (1-\gamma\sqrt{\mu s})\left(F(y_n) - F(x_n) \right)\nonumber \\
     & + (1-\gamma \sqrt{\mu s})\sqrt{\mu s} \frac{\alpha}{1-\alpha}\left(\langle \nabla F(y_n),x_n - y_n \rangle  - \frac{\alpha}{1-\alpha}\frac{\gamma \mu}{2} \lVert y_n -x_n \rVert^2 \right).\label{lastineq}
\end{align}
Suppose additionally that the lower curvature is bounded from below by $\rho$, hence: 
$$\forall n\in \N, ~F(y_n) + \langle \nabla F(y_n),x_n-y_n \rangle + \frac{\rho}{2}\lVert x_n - y_n \rVert^2 \leqslant F(x_n),$$
or equivalently:
$$\forall n\in \N, ~\langle \nabla F(y_n),x_n-y_n \rangle \leqslant F(x_n) - F(y_n)  - \frac{\rho}{2}\lVert x_n - y_n \rVert^2.$$
Injecting the very last inequality into \eqref{lastineq} we get:
\begin{align*}
         E_{n+1} - E_n &\leqslant  -\gamma\sqrt{\mu s}E_n + (1-\gamma\sqrt{\mu s})\left(1-\sqrt{\mu s} \frac{\alpha}{1-\alpha}\right)\left(F(y_n) - F(x_n) \right) \\
         & +  (1-\gamma \sqrt{\mu s})\frac{\sqrt{\mu s}}{2}  \frac{\alpha}{1-\alpha}\left(-\rho  - \frac{\alpha}{1-\alpha}\gamma \mu \right)  \lVert y_n -x_n \rVert^2 
\end{align*}
Lastly, choose $\alpha = \frac{1}{1+\sqrt{\mu s}}$ to cancel out the term in $F(y_n)-F(x_n)$, so that we finally get: 
   \begin{align*}
      E_{n+1} - E_n &=- \gamma\sqrt{\mu s}E_n + \frac{1-\gamma \sqrt{\mu s}}{2}\left(-\rho  - \gamma\frac{\sqrt{\mu}}{\sqrt{s}} \right)  \lVert y_n -x_n \rVert^2 \leqslant - \gamma\sqrt{\mu s}E_n
\end{align*}
provided that the lower curvature satisfies: $\rho \geqslant -\gamma\frac{\sqrt{\mu}}{\sqrt{s}}$.

\paragraph{Step 3.} To conclude, we proved that:
\begin{equation}
    E_{n+1} - E_n \leqslant -\gamma\sqrt{\mu s}E_n \Rightarrow E_{n+1} \leqslant (1- \gamma\sqrt{\mu s})E_n 
\end{equation}
By induction:
\begin{equation}
 \forall n\in \N,~   E_{n+1} \leqslant \left(1- \gamma\sqrt{\mu s} \right)^{n+1}E_0 =  \left(1- \gamma\sqrt{\mu s} \right)^{n+1}\left( F(x_0) - F^\ast + \frac{\mu}{2}\lVert x_0 - x^\ast \rVert^2 \right)
\end{equation}
In the last equality we use that by definition of Algorithm \ref{algo} $x_0 = z_0$.
We then use the fact that $(\gamma,\mu)$ strong convexity implies $\frac{\gamma \mu}{2-\gamma}$-quadratic growth (corollary 1 \cite{hinder2023nearoptimal}), \textit{i.e.}
\begin{equation}
 \forall x\in\R^d,~   F(x) - F^\ast \geqslant \frac{\gamma \mu }{2(2-\gamma)}\lVert x - x^\ast \rVert^2
\end{equation}
which finally yields to:
        \begin{equation}
        F(x_{n}) - F^\ast \leqslant \frac{2}{\gamma} \left( 1 - \gamma \sqrt{\mu s} \right)^n(F(x_0)-F^\ast).
    \end{equation}

\subsubsection{2 points scheme version of algorithm 1}\label{appendix 2 points}
Here we build upon the work \cite{lee2021geometric}, where they show there exists an equivalence between a 3 points and 2 points scheme version of Nesterov Accelerated gradient. We want to deduce a 2 points scheme from our 3 points one, but we can not directly apply their result because the 3 points algorithm they consider is slightly different.
\begin{proposition}
    The algorithm \begin{equation}\label{algo 3 points}
    \left\{
    \begin{array}{ll}
    y_n = \alpha_nx_n + (1- \alpha_n)z_n\\
    x_{n+1} = y_n - s\nabla F(y_n)\\
    z_{n+1} = \beta_nz_n + (1 - \beta_n)y_n - \eta_n \nabla F(y_n)
    \end{array}
\right.
\end{equation} can be written as the following 2 points scheme
\begin{equation}
    \left\{
    \begin{array}{ll}
        y_n = x_n + \frac{1 - \alpha_{n}}{1-\alpha_{n-1}}\alpha_{n-1} \beta_{n-1}(x_n - x_{n-1}) + (1-\alpha_{n})\left( \frac{\eta_{n-1}}{s} - \frac{\alpha_{n-1}\beta_{n-1}}{1-\alpha_{n-1}} -1 \right)(x_n - y_{n-1}) \\
        x_{n+1} = y_n - s\nabla F(y_n)
    \end{array}
\right.
\end{equation}
\end{proposition}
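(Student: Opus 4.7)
The plan is to eliminate the auxiliary sequence $\{z_n\}_{n\in\N}$ from the three–point recursion by using the first line to express $z_{n-1}$ as a linear combination of $x_{n-1}$ and $y_{n-1}$, and the second line to express $\nabla F(y_{n-1})$ as a linear combination of $y_{n-1}$ and $x_n$. Inserting both into the third line will give $z_n$ as a linear combination of $x_{n-1}$, $y_{n-1}$ and $x_n$, and then plugging this back into $y_n = \alpha_n x_n + (1-\alpha_n)z_n$ yields $y_n$ as an affine combination of $x_{n-1}$, $y_{n-1}$ and $x_n$, which we rewrite in the claimed $(x_n-x_{n-1})$, $(x_n-y_{n-1})$ form.

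Concretely, the first step is to observe that for any $n\geq 1$, the identity $y_{n-1} = \alpha_{n-1}x_{n-1}+(1-\alpha_{n-1})z_{n-1}$ is invertible in $z_{n-1}$ whenever $\alpha_{n-1}\neq 1$, giving $z_{n-1} = \frac{y_{n-1}-\alpha_{n-1}x_{n-1}}{1-\alpha_{n-1}}$, and that $\nabla F(y_{n-1}) = \frac{y_{n-1}-x_n}{s}$ from the second line. Substituting both expressions into $z_n = \beta_{n-1}z_{n-1} + (1-\beta_{n-1})y_{n-1} - \eta_{n-1}\nabla F(y_{n-1})$ and collecting the coefficients of $y_{n-1}$, $x_{n-1}$ and $x_n$ gives, after the routine simplification $\beta_{n-1} + (1-\beta_{n-1})(1-\alpha_{n-1}) = 1-\alpha_{n-1}+\alpha_{n-1}\beta_{n-1}$, the relation
\begin{equation*}
z_n = \Bigl(1 + \tfrac{\alpha_{n-1}\beta_{n-1}}{1-\alpha_{n-1}} - \tfrac{\eta_{n-1}}{s}\Bigr)y_{n-1} - \tfrac{\alpha_{n-1}\beta_{n-1}}{1-\alpha_{n-1}}x_{n-1} + \tfrac{\eta_{n-1}}{s}x_n.
\end{equation*}

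The second step is to plug this expression for $z_n$ into $y_n = \alpha_n x_n + (1-\alpha_n)z_n$ and to match the resulting coefficients with the target
\begin{equation*}
y_n = (1+A+B)x_n - A\,x_{n-1} - B\,y_{n-1}, \quad A := \tfrac{1-\alpha_n}{1-\alpha_{n-1}}\alpha_{n-1}\beta_{n-1}, \quad B := (1-\alpha_n)\Bigl(\tfrac{\eta_{n-1}}{s} - \tfrac{\alpha_{n-1}\beta_{n-1}}{1-\alpha_{n-1}} - 1\Bigr).
\end{equation*}
The coefficient of $x_{n-1}$ agrees with $-A$ by inspection; the coefficient of $y_{n-1}$ agrees with $-B$ after distributing $(1-\alpha_n)$; and the $x_n$ coefficient $\alpha_n + (1-\alpha_n)\tfrac{\eta_{n-1}}{s}$ coincides with $1+A+B$ because the two terms in $\alpha_{n-1}\beta_{n-1}/(1-\alpha_{n-1})$ cancel, leaving $1+(1-\alpha_n)\tfrac{\eta_{n-1}}{s}-(1-\alpha_n) = \alpha_n + (1-\alpha_n)\tfrac{\eta_{n-1}}{s}$. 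This gives the announced two-point form, while the update $x_{n+1} = y_n - s\nabla F(y_n)$ is preserved unchanged.

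I do not expect any real obstacle: the derivation is purely algebraic, the only mild hypothesis being $\alpha_{n-1}\neq 1$, which is guaranteed by the explicit choice $\alpha_n = 1/(1+\sqrt{\mu s})$ in the proposition. The only point that requires a touch of care is the initialisation for $n=1$: since $z_0 = x_0$, we have $y_0 = \alpha_0 x_0 + (1-\alpha_0)z_0 = x_0$, so it is natural to set $x_{-1} := x_0$ and $y_{-1} := x_0$, under which both increments $x_1-x_0$ and $x_1-y_0$ vanish and the two-point recursion reproduces the three-point one at the first step.
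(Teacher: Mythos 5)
Your proof is correct: I checked the elimination of $z_n$ and the coefficient matching, and every step goes through (the affine-combination sanity check — the coefficients of $x_n$, $x_{n-1}$, $y_{n-1}$ in your expression for $z_n$ sum to $1$ — also holds). However, your route is genuinely different from the paper's. The paper adapts the geometric argument of Lemma 2 in \cite{lee2021geometric}: it introduces auxiliary points $A$ and $B$ defined by parallelism conditions, applies Thales' theorem three times to identify the ratios $\alpha_{n+1}$, $\beta_n\alpha_n/(1-\alpha_n)$ and $\eta_n/s$ as quotients of lengths, and must explicitly set aside the degenerate case $\nabla F(x_{n-1})=0$ for the construction to make sense. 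Your argument is a direct algebraic elimination — invert $y_{n-1}=\alpha_{n-1}x_{n-1}+(1-\alpha_{n-1})z_{n-1}$ for $z_{n-1}$, read $\nabla F(y_{n-1})=(y_{n-1}-x_n)/s$ off the gradient step, substitute into the $z$-update and collect coefficients — which is shorter, handles the degenerate case with no extra discussion, and needs only $\alpha_{n-1}\neq 1$ (automatic for $\alpha_n=1/(1+\sqrt{\mu s})$ with $\mu s>0$). What the paper's geometric route buys is a visual interpretation of how the three sequences sit relative to one another and a direct parallel with the cited reference; what yours buys is brevity and uniformity. Your remark on the initialisation (setting $x_{-1}=y_{-1}=x_0$ so that the two-point recursion is consistent at the first step, since $z_0=x_0$ forces $y_0=x_0$) is a useful point of care that the paper's proof does not address explicitly.
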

\begin{proof}
    We adapt the proof of Lemma 2 from \cite{lee2021geometric}, which is mainly based on Thales theorem. In their result, the $\alpha_n$ is $\frac{\varphi_n}{\varphi_{n-1}}$ and $\beta_nz_n + (1 - \beta_n)y_n$ is $z_n$.  The main idea is that if a vector $u$ is colinear to $v$, \textit{i.e.} $\exists \lambda \in \mathbb{R}$ $u = \lambda v$, then $\lambda = \frac{\lVert u \rVert}{\lVert v \rVert}$, which we can find using Thales theorem. As in the original proof, let us rewrite (\ref{algo 3 points}) as:
\begin{equation}\label{algo 3 point proof}
    \left\{
    \begin{array}{ll}
    x_n = \alpha_nx_{n-1}^+ + (1- \alpha_n)z_n\\
    z_{n+1} = \beta_nz_n + (1 - \beta_n)x_n - \eta_n \nabla F(x_n)
    \end{array}
\right.
\end{equation}
where $x_{n-1}^+ = x_{n-1} - s \nabla F(x_{n-1})$. We suppose $\nabla F(x_{n-1}) \neq 0$ (non degenerate case). We set $v_n = \beta_n z_n + (1-\beta_n)x_n$, and let's set A on the $[x_{n-1}^+x_n]$ segment such that $Ax_{n+1} \parallel x x_n^+ $. Let $B$ on $x_{n+1}^+x_n^+ \cap v_n z_{n+1}$. Since $A x_{n+1} \parallel B z_{n+1}$, we have by Thales theorem:
\begin{equation}
    \frac{\lVert B-A \rVert}{\lVert B-x_n^+ \rVert} = \frac{\lVert z_{n+1} -x_{n+1} \rVert}{\lVert z_{n+1} - x_n^+ \rVert}:= (\star)
\end{equation}
Then, by definition: 
\begin{align}
    z_{n+1} - x_{n+1} = \alpha_{n+1}(z_{n+1} - x_n^+) \Rightarrow \alpha_{n+1} = (\star)
\end{align}
As $B-A = \lambda (B-x_n^+)$ for some $\lambda \in \mathbb{R}$ (colinearity), we have $\lambda = \alpha_{n+1}$ and then 
\begin{equation}\label{3 point vers 2 point relation 1}
    B-A = \alpha_{n+1}(B-x_n^+)
\end{equation}Also, colinearity of $Bx_n^+$ and $x_n^+x_{n-1}^+$ together with $x_n x_n^+ \parallel v_n B$ leads to $B-x_n^+ = \lambda(x_n^+ - x_{n-1}^+)$, with
\begin{align}
 \lambda = \frac{\lVert B-x_n^+\rVert}{\lVert x_n^+ - x_{n-1}^+\rVert} = \frac{\lVert v_n - x_n \rVert}{\lVert x_n - x_{n-1}^+ \rVert}
\end{align}
where we have
\begin{align}
    &v_n - x_n = \beta_n z_n + (1-\beta_n)x_n - x_n = \beta_n(z_n - x_n) = \beta_n\alpha_n (z_n - x_{n-1}^+)\\
    &x_n - x_{n-1}^+ = (1 - \alpha_n)(z_n - x_{n-1}^+)
\end{align}
such that 
\begin{equation}\label{3 point vers 2 point relation 2}
    B-x_n^+ = \beta_n\frac{\alpha_n}{1 - \alpha_n}(x_n^+ - x_{n-1}^+)
\end{equation}
Combining (\ref{3 point vers 2 point relation 1}) and (\ref{3 point vers 2 point relation 2}), we get 
\begin{align}\label{A-xk+}
    A - x_n^+ &= (B - x_n^+) - (B-A) = (B-x_n^+) = (B-x_n^+) - \alpha_{n+1}(B-x_n^+)\\
    &= (1-\alpha_{n+1})(B-x_n^+) = \beta_n \frac{(1-\alpha_{n+1})\alpha_n}{1-\alpha_n}(x_n^+ - x_{n-1}^+)
\end{align} 
Then, we study $x_{n+1} - A$. As $Ax_{n+1} \parallel B z_{n+1}$ we have by Thales theorem
\begin{align}
    \frac{\lVert x_{n+1} - A\rVert}{\lVert z_{n+1} - B\rVert} = \frac{\lVert x_{n+1}-x_n^+\rVert}{\lVert z_{n+1} - x_{n}^+\rVert} = 1 - \alpha_{n+1}
\end{align}
Last equality because $x_{n+1} - x_n^+ = (1 - \alpha_{n+1})(z_{n+1} -x_n^+)$. We then have \begin{equation}\label{relation 3 et 4 3 points vers 2 points}
    x_{n+1} - A = (1 - \alpha_{n+1})(z_{n+1}- B) = (1 - \alpha_{n+1})((z_{n+1}-v_n) - (B-v_n))
\end{equation}
Where we recall $v_n = \beta_n z_n + (1-\beta_n)x_n$. We have: 
\begin{equation}\label{3 points vers 2 points relation 3}
z_{n+1} - v_n = (x_n^+ - x_n)\frac{\eta_n}{s}
\end{equation}
Then we use $x_n x_n^+ \parallel v_n B$ to us Thales theorem once again: 
\begin{align}
    \frac{\lVert B-v_n\rVert}{\lVert x_n^+ - x_n\rVert} = \frac{\lVert v_n - x_{n-1}^+\rVert}{\lVert x_n -  x_{n-1}^+\rVert}:= (\star \star)
\end{align}
We have $v_n - x_{n-1}^+ = \beta_n z_n + (1-\beta_n)x_n - x_{n-1}^+$. We have also
\begin{align}
    &x_n = \alpha_n x_{n-1}^+ + (1-\alpha_n)z_n \\
    \Rightarrow &\beta_nz_n - \beta_n x_n = \alpha_n \beta_n (z_n - x_{n-1}^+)\\
    \Rightarrow & \beta_n z_n + (1-\beta_n)x_n - x_{n-1}^+ = \alpha_n \beta_n (z_n - x_{n-1}^+) + x_n - x_{n-1}^+\\
    \Rightarrow &V_n - x_{n-1}^+ = (\frac{\alpha_n \beta_n}{1-\alpha_n}+1)(x_n - x_{n-1}^+)
\end{align}
This induces that $(\star \star) = (\frac{\alpha_n \beta_n}{1-\alpha_n}+1)$ and then
\begin{equation}\label{3 points vers 2 points relation 4}
    B-v_n = \left(\frac{\alpha_n \beta_n}{1-\alpha_n}+1\right)(x_n^+-x_n)
\end{equation}
Finally, injecting (\ref{3 points vers 2 points relation 3}) and (\ref{3 points vers 2 points relation 4}) in (\ref{relation 3 et 4 3 points vers 2 points}), we get 
\begin{align}\label{xk+1 - A}
    x_{n+1} - A &= (1 - \alpha_{n+1})\left(\frac{\eta_n}{s}(x_n^+ - x_n)- (\frac{\alpha_n \beta_n}{1-\alpha_n}+1)(x_n^+-x_n)\right)\\
    &= (1 - \alpha_{n+1})\left(\frac{\eta_n}{s} - \frac{\alpha_n \beta_n}{1-\alpha_n}+1\right)(x_n^+-x_n)
\end{align}
We can conclude by combining (\ref{A-xk+}) and (\ref{xk+1 - A}) that 
\begin{equation}
    x_{n+1} = x_n^+ +\beta_n \frac{(1-\alpha_{n+1})\alpha_n}{1-\alpha_n}(x_n^+ - x_{n-1}^+) + (1 - \alpha_{n+1})\left(\frac{\eta_n}{s} - \frac{\alpha_n \beta_n}{1-\alpha_n}+1\right)(x_n^+-x_n)
\end{equation}
\end{proof}

\begin{corollaire}
The algorithm 1 with parameters $s\leqslant \frac{1}{L}$, $\alpha_n = \frac{1}{1 + \sqrt{\mu s}}$, $\beta_n = 1 - \gamma \sqrt{\mu s}$ and $\eta_n = \frac{\sqrt{s}}{\sqrt{\mu }}$ can be written as the following 2 points scheme
    \begin{equation}
    \left\{
    \begin{array}{ll}
        y_n = x_n + \frac{1 - \gamma \sqrt{\mu s}}{1 + \sqrt{\mu s}} (x_n - x_{n-1}) + \frac{\sqrt{\mu s}}{1 + \sqrt{\mu s}}(\gamma -1 )(x_n - y_{n-1}) \\
        x_{n+1} = y_n - s\nabla F(y_n)
    \end{array}
\right.
\end{equation}
\end{corollaire}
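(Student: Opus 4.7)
The plan is to deduce the corollary by direct substitution into the general $2$-point reformulation established in the preceding Proposition. That proposition expresses $y_n$ in Algorithm \ref{algo} as
\[
y_n = x_n + \frac{1-\alpha_n}{1-\alpha_{n-1}}\alpha_{n-1}\beta_{n-1}(x_n-x_{n-1}) + (1-\alpha_n)\left(\frac{\eta_{n-1}}{s} - \frac{\alpha_{n-1}\beta_{n-1}}{1-\alpha_{n-1}} - 1\right)(x_n-y_{n-1}),
\]
so the task is to specialize this identity to the constant sequences $\alpha_n=\alpha:=\tfrac{1}{1+\sqrt{\mu s}}$, $\beta_n=\beta:=1-\gamma\sqrt{\mu s}$, $\eta_n=\eta:=\sqrt{s/\mu}$.

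First I would exploit constancy of $\alpha_n$: the ratio $\tfrac{1-\alpha_n}{1-\alpha_{n-1}}$ equals $1$, so the coefficient of $(x_n-x_{n-1})$ collapses to $\alpha\beta$. Plugging in the values yields $\alpha\beta = \tfrac{1-\gamma\sqrt{\mu s}}{1+\sqrt{\mu s}}$, which is exactly the first coefficient in the claimed scheme.

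Next I would compute the coefficient of $(x_n-y_{n-1})$ piece by piece. Using $\eta = \sqrt{s/\mu}$ gives $\tfrac{\eta}{s}=\tfrac{1}{\sqrt{\mu s}}$. Since $1-\alpha=\tfrac{\sqrt{\mu s}}{1+\sqrt{\mu s}}$, one gets $\tfrac{\alpha\beta}{1-\alpha}=\tfrac{1-\gamma\sqrt{\mu s}}{\sqrt{\mu s}}$. Subtracting and simplifying,
\[
\frac{\eta}{s} - \frac{\alpha\beta}{1-\alpha} - 1 = \frac{1-(1-\gamma\sqrt{\mu s})}{\sqrt{\mu s}} - 1 = \gamma - 1,
\]
and multiplying by $1-\alpha = \tfrac{\sqrt{\mu s}}{1+\sqrt{\mu s}}$ recovers the second stated coefficient $\tfrac{\sqrt{\mu s}}{1+\sqrt{\mu s}}(\gamma-1)$. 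The $x_{n+1}=y_n-s\nabla F(y_n)$ line is unchanged from the Proposition.

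There is no real obstacle here: the entire proof is algebraic book-keeping, with the only mild simplification being the cancellation in the $(\eta/s - \alpha\beta/(1-\alpha) - 1)$ term that produces the compact factor $\gamma-1$. The only thing worth flagging in writing it up is that constancy of the three parameters is what makes the ratio $\tfrac{1-\alpha_n}{1-\alpha_{n-1}}$ disappear; for non-constant parameters this simplification would not occur and one would be left with the proposition in its general form.
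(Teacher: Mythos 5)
Your proposal is correct and is exactly the paper's approach: the paper's proof of this corollary is the one-line "apply the previous proposition with these parameter values," and your computation just makes the substitution explicit (the coefficient $\alpha\beta$ and the cancellation giving $\gamma-1$ both check out). No further comment is needed.
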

\begin{proof}
    We just apply previous result with $\alpha_n = \frac{1}{1 + \sqrt{\mu s}}$, $\beta_n = 1 - \gamma \sqrt{\mu s}$ et $\eta_n = \frac{\sqrt{s}}{\sqrt{\mu }}$.
\end{proof}
\subsubsection{Proof of theorem \ref{theorem 2}}\label{appendix non diff theorem 2}
In this section, we detail the proof of Theorem~\ref{theorem 2} whose statement is: let $F = f+g$ where $f: \mathbb{R}^d \to \mathbb{R}$ is a $L$-smooth function for some $L>0$ and $g: \mathbb{R}^d \to \mathbb{R}$ is convex, proper, lower semi-continuous. Assume that $F$ has a non empty set of minimizers and that $f$ is $(1,\mu)$-strongly quasar convex with respect to $x^\ast_F \in \arg \min~F$  and $(\rho,L)$-curvatured for some $\rho\leqslant L$. Let $(x_n)_{n\in \N}$ be generated by Algorithm~\ref{algo non diff}:
\begin{eqnarray*}
y_n &=& \alpha_nx_n + (1- \alpha_n)z_n\\
x_{n+1} &=& \text{prox}_{sg}(y_n - s\nabla f(y_n)):= T_s(y_n)\\
z_{n+1} &=& \beta_nz_n + (1 - \beta_n)y_n - \frac{\eta_n}{s}(y_n - T_s(y_n))
\end{eqnarray*}
with parameters 
\begin{equation*}
    s\leqslant \frac{1}{L}, \quad\alpha_n = \frac{1}{1 + \sqrt{\mu s}}:=\alpha,\quad \beta_n = 1 - \gamma \sqrt{\mu s}:=\beta,\quad\eta_n = \frac{\sqrt{s}}{\sqrt{\mu }}:=\eta.
\end{equation*}
Assuming that $\rho \geqslant - \sqrt{\frac{\mu}{s}}$ we want to prove that:
        \begin{equation}
 \forall n\in \N, ~   F(x_{n}) - F^\ast \leqslant 2\left( 1 - \sqrt{\mu s} \right)^n(F(x_0)-F^\ast)\label{th3:result}
    \end{equation}
where $F^*=\min~F$. Let $x^*$ be the unique minimizer of $F$. As for the proof of Theorem \ref{theorem 1}, we introduce the following Lyapunov energy:
\begin{equation}
    E_n = F(x_n) - F^\ast + \frac{\mu}{2}\lVert z_n - x^\ast \rVert^2
\end{equation}
and we seek the parameters et conditions for which the following inequality holds:
\begin{equation}
    E_{n+1} - E_n \leqslant cE_n
\end{equation}
with $c<0$ being as small as possible. We will then deduce the convergence rate \eqref{th3:result} by induction.
\begin{proof}
The proof is very similar to Theorem \ref{theorem 1}. Let us consider the same Lyapunov energy:
\begin{equation}
        E_{n} = F(x_n) - F^\ast + \frac{\mu}{2}\lVert z_n - x^\ast_F \rVert^2
\end{equation}
\paragraph{Step 1.} We start by calculating $E_{n+1}-E_n$, and the exact same computations as in Step 1 for Theorem \ref{theorem 1} leads to:
\begin{align}
E_{n+1} - E_n &= -(1-\beta) E_n + F(x_{n+1}) - F^* -\beta\left(F(x_n)-F^*\right) + \frac{\mu}{2}(1-\beta)\|y_n-x^*_F\|^2\nonumber \\
& +\frac{\mu}{2s^2}\eta^2\|y_n-T_s(y_n)\|^2 -\frac{\mu}{2}\beta(1-\beta)\left(\frac{\alpha}{1-\alpha}\right)^2\|y_n-x_n\|^2 \nonumber\\
&+ \frac{\mu\eta}{s} \langle y_n-T_s(y_n),x^*_F-y_n\rangle +\frac{\alpha\beta\eta\mu}{(1-\alpha)s}\langle y_n-T_s(y_n),x_n -y_n \rangle,
\end{align}
replacing $\nabla f(y_n)$ by the composite gradient $\frac{1}{s}(y_n - T_s(y_n))$, where $T_s(y_n):=\text{prox}_{sg}(y_n - s \nabla f(y_n))$.

\paragraph{Step 2.} Let us now prove that for any $n\in \N$, we have: $E_{n+1} -E_n \leqslant -(1-\beta)E_n$ for some well-chosen values of the parameters $\beta$, $\eta$ and $\alpha$.

    To control the scalar products, first note that:      
    \begin{align}\label{I-i}
        2\langle y_n-T_s(y_n),x^*_F-y_n\rangle = \lVert T_s(y_n) - x^\ast_F \rVert^2 - \lVert y_n - T_s(y_n) \rVert^2 - \lVert y_n - x^\ast_F \rVert^2.
    \end{align}
    Combining the prox-grad inequality (\cite[Theorem 10.16]{beck2017}): for all $(x,y)\in \R^d\times\R^d$,
    \begin{equation}\label{prox-grad}\tag{Prox-Grad}
     F(x) - F(T_s(y)) \geqslant \frac{1}{2s} \lVert x-T_s(y) \rVert^2 - \frac{1}{2s}\lVert x-y \rVert^2 + f(x) - f(y) - \langle \nabla f(y),x-y \rangle,
    \end{equation}
 applied at $x = x^\ast_F$ and $y = y_n$, and the definition of strong quasar convexity in the sense of Definition~\ref{def:SQC:nonminimizer}:
\begin{align*}\label{I-iii}
\forall n\in\N,~f(x^\ast_F) - f(y_n) - \langle \nabla f(y_n),x^\ast_F -y_n \rangle \geqslant \frac{\mu}{2}\lVert x^\ast_F - y_n \rVert^2,
\end{align*}
we prove that for any $n\in N$,
\begin{eqnarray*}
2s\left(F^* - F(T_s(y_n))\right) &\geqslant &\lVert x^\ast_F-T_s(y_n) \rVert^2 - \lVert x^\ast_F-y_n \rVert^2 + 2s\left(f(x^\ast_F) - f(y_n) -\langle \nabla f(y_n),x^\ast_F -y_n \rangle\right)\\
&\geqslant & \lVert x^\ast_F-T_s(y_n) \rVert^2 - \lVert x^\ast_F-y_n \rVert^2 + \mu s\lVert x^\ast_F - y_n \rVert^2.
\end{eqnarray*}
Hence, reinjecting into \eqref{I-i} and remembering $x_{n+1}=T_s(y_n)$, we get:
\begin{equation}
\forall n\in \N,~\langle y_n - T_s(y_n),x^\ast_F - y_n\rangle  \leqslant -s\left( F(x_{n+1})-F^\ast \right) - \frac{\mu s}{2}\lVert y_n -x^\ast_F \rVert^2 - \frac{1}{2}\lVert y_n - x_{n+1} \rVert^2.\label{controlI}
\end{equation}

Similarly, consider then the second scalar product: 
\begin{eqnarray}
2\langle x_n - y_n, \ y_n - T_s(y_n) \rangle &=& \lVert T_s(y_n) - x_n \rVert^2 - \lVert y_n - T_s(y_n) \rVert^2 - \lVert y_n - x_n\rVert^2\\
&\leqslant & 2s\left(F(x_n) -F(T_s(y_n))\right)+ 2s(f(y_n)-f(x_n) +\langle \nabla f(y_n),x_n-y_n\rangle) \nonumber\\
&&-\|y_n-T_s(y_n)\|^2\\
&\leqslant & 2s\left(F(x_n) -F(x_{n+1})\right)+ 2s(f(y_n)-f(x_n) +\langle \nabla f(y_n),x_n-y_n\rangle) \nonumber\\
&&-\|y_n-x_{n+1}\|^2\label{controlII}
\end{eqnarray}
using again \eqref{prox-grad} evaluated at $x = x_n$ and $y = y_n$ and $x_{n+1}=T_s(y_n)$.\\

Reinjecting \eqref{controlI} and \eqref{controlII} into the expression of $E_{n+1}-E_n$ obtained at the end of Step 1, we get:
\begin{eqnarray*}
E_{n+1}-E_n &\leqslant & -(1-\beta)E_n + \left(1-\mu\eta-\mu \beta \frac{\alpha}{1-\alpha}\eta\right)(F(x_{n+1})-F^*) + \beta\left(\frac{\alpha\eta\mu}{1-\alpha}-1\right)(F(x_n)-F^*)\\
&& +\frac{\mu}{2}\left(1-\beta-\mu\eta\right)\|y_n-x^*_F\|^2 + \frac{\mu\eta}{2s}\left(\frac{\eta}{s}-1 -\frac{\alpha\beta}{1-\alpha}\right)\|y_n-x_{n+1}\|^2\\
&&-\frac{\mu}{2}\beta(1-\beta)\left(\frac{\alpha}{1-\alpha}\right)^2\|y_n-x_n\|^2 -\frac{\alpha\beta\eta\mu}{1-\alpha}\left(f(x_n)-f(y_n)-\langle\nabla f(y_n),y_n-x_n\rangle\right)
\end{eqnarray*}
As for Theorem~\ref{theorem 1}, choose: $\eta = \frac{\sqrt{s}}{\sqrt{\mu }}$, $\beta = 1 - \eta\mu=1- \sqrt{\mu s}$ and $\alpha = \frac{1}{1 + \eta\mu}=\frac{1}{1 + \sqrt{\mu s}}$ to cancel out the terms in $F(x_{n+1})-F^*$, $\|y_n-x^*_F\|^2$, $F(x_n)-F^*$ and $\|y_n-x_{n+1}\|^2$. We then get:
\begin{align}
    E_{n+1}-E_n \leqslant -(1-\beta)E_n + \beta\left(f(y_n) + \langle \nabla f(y_n),x_n - y_n \rangle - f(x_n) \right)-\beta \frac{\sqrt{\mu}}{2\sqrt{s}}\lVert x_n - y_n \rVert^2.
\end{align}
Finally, assuming additionally that $f$ is $(\rho,L)$-curvatured for some $\rho\leqslant L$, observe that:
$$\forall n\in \N, f(y_n)+ \langle \nabla f(y_n),x_n - y_n \rangle - f(x_n) \leqslant -\frac{\rho}{2}\|x_n-y_n\|^2,$$
which induces:
\begin{equation*}
\forall n\in \N,~    E_{n+1}-E_n \leqslant -(1-\beta)E_n -\frac{\beta}{2}\left(\rho +\frac{\sqrt{\mu}}{\sqrt{s}}\right) \|x_n-y_n\|^2\\
\end{equation*}
Provided that $\rho\geqslant -\sqrt\frac{\mu}{s}$, we finally obtain the expected inequality, namely: $E_{n+1}-E_n \leqslant -\sqrt{\mu s}E_n$ for all $n\in \N$, and we can conclude the proof exactly the same way as for Theorem \ref{theorem 1}, Step 3.
\end{proof}

\section{Continuous analysis through High resolution ODEs}\label{appendix high res}
\paragraph{Derivation of ODE}
Recall that the algorithm we prove convergence in Theorem \ref{theorem 1} can be written
    \begin{equation}
    \left\{
    \begin{array}{ll}
        y_n = x_n + \frac{1 - \gamma \sqrt{\mu s}}{1 + \sqrt{\mu s}} (x_n - x_{n-1}) + \frac{\sqrt{\mu s}}{1 + \sqrt{\mu s}}(\gamma -1 )(x_n - y_{n-1}) \\
        x_{n+1} = y_n - s\nabla F(y_n)
    \end{array}
\right.
\end{equation}
Writting only with respect to $y_n$, we get
\begin{equation}\label{eq haute res}
    y_{n+1} = y_{n} + \frac{1 - \gamma \sqrt{\mu s}}{1 + \sqrt{\mu s}}(y_{n}-y_{n-1}) - s\left(1 +\frac{\sqrt{\mu s}}{1 + \sqrt{\mu s}}(\gamma-1 )  \right)\nabla F(y_{n}) -s \frac{1 - \gamma \sqrt{\mu s}}{1 + \sqrt{\mu s}}(\nabla F(y_{n}) - \nabla F(y_{n-1}))
\end{equation}
The following development will be very close to the one introduced in \cite{shi2018understanding}. We assume there exists a smooth curve $X$ such that $X(t_n) = y_n$, where $t_n = n\sqrt{s}$. By Taylor development, we have 
\begin{align}
    &y_{n+1} = X(t_{n+1}) = X(t_n) + \sqrt{s}\dot{X}(t_n) + \frac{s}{2}\ddot{X}(t_n) + \frac{\sqrt{s}^3}{6}\dddot{X}(t_n) + \bigO(s^2)\\
    &    y_{n-1} = X(t_{n-1}) = X(t_n) - \sqrt{s}\dot{X}(t_n) + \frac{s}{2}\ddot{X}(t_n) - \frac{\sqrt{s}^3}{6}\dddot{X}(t_n) + \bigO(s^2)
\end{align}
Another Taylor development gives
\begin{equation}
    \nabla F(y_n) - \nabla F(y_{n-1}) = \nabla^2 F(X(t_n))\dot{X}(t_n)\sqrt{s} + \bigO(s)
\end{equation}
Multiplying both sides of (\ref{eq haute res}) by $\frac{1+ \sqrt{\mu s}}{1-\gamma\sqrt{\mu s}}\frac{1}{s}$, we get
\begin{align}
    \frac{y_{n+1} + y_{n-1} - 2y_n}{s} + \frac{(1+\gamma)\sqrt{\mu s}}{1-\gamma\sqrt{\mu s}}\frac{y_{n+1} - y_{n}}{s} + \nabla F(y_n) - \nabla F(y_{n-1}) + \frac{1+\gamma \sqrt{\mu s}}{1 - \gamma \sqrt{\mu s}}\nabla F(y_{n-1}) = 0
\end{align}
Using Taylor developments above, we have
\begin{align}
    \ddot{X}(t_n) &+ \bigO(s) + \frac{(1+\gamma)\sqrt{\mu }}{1-\gamma\sqrt{\mu s}}\left[ \dot{X}(t_n)+ \frac{1}{2}\ddot{X}(t_n)\sqrt{s} + \bigO(s) \right] 
    \\
    &+ \nabla^2 F(X(t_n))\dot{X}(t_n)\sqrt{s} + \bigO(s) + \left( \frac{1+\gamma\sqrt{\mu s}}{1-\gamma\sqrt{\mu s}} \right)\nabla F(X(t_n))=0
\end{align}
Multiplying both sides by $1-\gamma \sqrt{\mu s}$ and ignoring $\bigO(s)$ terms, we get that (\ref{eq haute res}) is a discretization of:
\begin{equation}\label{NAG-SQC-ODE1}\tag{NAG-SQC-ODE}
    (1+\frac{1-\gamma}{2}\sqrt{\mu s})\ddot{X}(t) + (1+\gamma)\sqrt{\mu}\dot{X}(t) + \sqrt{s}\nabla^2 F(X(t))\dot{X} + (1+\gamma\sqrt{\mu s})\nabla F(X(t)) = 0
\end{equation}
\begin{theorem*}
    Let $F$ be $C^2$, $(\gamma,\mu)-$strongly quasar convex and $L$-smooth. Assume X is solution of (\ref{NAG-SQC-ODE1}) with $0 \leqslant s \leqslant \frac{1}{L}$, $X(0) = X_0$ and $\dot{X}(0) =0 $. Then:
       \begin{equation}
            F(X(t))-F^\ast \leqslant  K_0(\gamma,\mu,L,s)\frac{1}{\gamma}(F(X_0)-F^\ast)  e^{-\gamma\frac{\sqrt{\mu}}{2}t}
        \end{equation}
        where $ K_0(\gamma,\mu,L,s) \leqslant 7$.
\end{theorem*}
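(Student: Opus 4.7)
\medskip

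\noindent\textbf{Proof proposal.} The plan is to exhibit a Lyapunov function whose time derivative decays exponentially, then to bootstrap this into a bound on $F(X(t))-F^\ast$ using the initial condition. Following the hint in the paper, I would work with
\begin{equation*}
\mathcal{E}(t) = \delta(F(X(t))-F^\ast) + \frac{1}{2}\|u(t)\|^2,\quad u(t) := \upsilon\dot X(t) + \lambda(X(t)-x^\ast) + \sqrt{s}\,\nabla F(X(t)),
\end{equation*}
with $\upsilon = 1+\tfrac{1-\gamma}{2}\sqrt{\mu s}$ (the coefficient of $\ddot X$ in the ODE) and $\delta,\lambda>0$ to be fixed. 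The choice of $\upsilon$ is crucial: when I differentiate $u$, the term $\upsilon\ddot X$ will appear and can be substituted using (\ref{NAG-SQC-ODE}), so that the Hessian term $\sqrt{s}\nabla^2 F(X)\dot X$ is cancelled exactly by the $\sqrt{s}\nabla F(X)$ piece added to $u$. This structural cancellation is what allows the continuous analysis to bypass the curvature assumption needed in the discrete case.

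After this substitution I obtain $\dot u = (\lambda-(1+\gamma)\sqrt\mu)\dot X - (1+\gamma\sqrt{\mu s})\nabla F(X)$, so that $\tfrac{d}{dt}\tfrac12\|u\|^2 = \langle u,\dot u\rangle$ expands into six inner products. Combined with $\delta\langle\nabla F(X),\dot X\rangle$ coming from the derivative of $\delta(F(X)-F^\ast)$, these give terms in $\|\dot X\|^2$, $\langle X-x^\ast,\dot X\rangle$, $\|\nabla F(X)\|^2$, $\langle\nabla F(X),\dot X\rangle$, and $\langle\nabla F(X),X-x^\ast\rangle$. My goal is then to show $\dot{\mathcal E}(t)\leqslant -\tfrac{\gamma\sqrt\mu}{2}\mathcal E(t)$. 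I would pick $\delta$ so that $\langle\nabla F(X),\dot X\rangle$ terms vanish (i.e.\ $\delta = \upsilon(1+\gamma\sqrt{\mu s}) - \sqrt{s}(\lambda-(1+\gamma)\sqrt\mu)$), and $\lambda$ close to $\sqrt\mu$ to balance the $\langle X-x^\ast,\dot X\rangle$ piece arising from the cross expansion of $\mathcal E$.

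The key analytic ingredient to bound the remaining indefinite terms is the strongly quasar convex inequality
\begin{equation*}
\langle\nabla F(X),X-x^\ast\rangle \geqslant \gamma(F(X)-F^\ast) + \tfrac{\gamma\mu}{2}\|X-x^\ast\|^2,
\end{equation*}
which converts $\langle\nabla F(X),X-x^\ast\rangle$ into the correct $F(X)-F^\ast$ and $\|X-x^\ast\|^2$ contributions. The main obstacle will be the bookkeeping: one must check that for all admissible $(\gamma,\mu,L,s)$ with $s\leqslant 1/L$ and $\gamma\in(0,1]$, the parameters $\upsilon\in[1,3/2]$, $\delta\in[1,3]$, $\lambda\in[\sqrt\mu,\tfrac98\sqrt\mu]$ (the ranges announced in the paper) give a nonpositive residual quadratic form in $(\dot X, X-x^\ast, \nabla F(X))$ after subtracting $\tfrac{\gamma\sqrt\mu}{2}\mathcal E(t)$. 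The terms in $\|\nabla F(X)\|^2$ and $\|X-x^\ast\|^2$ are manifestly of the right sign because of the $-\sqrt s(1+\gamma\sqrt{\mu s})\|\nabla F(X)\|^2$ coming from $\langle u,\dot u\rangle$ and the $\mu$-quadratic term from quasar convexity. The cross term $\langle X-x^\ast,\nabla F(X)\rangle$ left in $\mathcal E$ will be split via Cauchy--Schwarz/Young to absorb it.

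Once $\dot{\mathcal E}(t)\leqslant -\tfrac{\gamma\sqrt\mu}{2}\mathcal E(t)$ is established, Gronwall gives $\mathcal E(t)\leqslant \mathcal E(0)e^{-\gamma\sqrt\mu\, t/2}$. Since $\mathcal E(t)\geqslant \delta(F(X(t))-F^\ast)\geqslant F(X(t))-F^\ast$, it remains to upper bound $\mathcal E(0)$. With $\dot X(0)=0$,
\begin{equation*}
\mathcal E(0) = \delta(F(X_0)-F^\ast) + \tfrac{1}{2}\|\lambda(X_0-x^\ast)+\sqrt s\,\nabla F(X_0)\|^2 \leqslant \delta(F(X_0)-F^\ast)+\lambda^2\|X_0-x^\ast\|^2 + s\|\nabla F(X_0)\|^2.
\end{equation*}
The $\frac{\gamma\mu}{2-\gamma}$-quadratic growth from Proposition~\ref{SQC implis PL & QG} gives $\|X_0-x^\ast\|^2\leqslant \frac{2(2-\gamma)}{\gamma\mu}(F(X_0)-F^\ast)$, which produces the announced $\tfrac{1}{\gamma}$ factor, while $L$-smoothness and $s\leqslant 1/L$ give $s\|\nabla F(X_0)\|^2\leqslant 2(F(X_0)-F^\ast)$. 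Combining, $\mathcal E(0)\leqslant K_0(\gamma,\mu,L,s)\tfrac{1}{\gamma}(F(X_0)-F^\ast)$ with $K_0$ a sum of the above constants; explicit arithmetic with the parameter ranges above yields $K_0\leqslant 7$, finishing the theorem.
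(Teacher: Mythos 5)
Your proposal follows essentially the same route as the paper's proof: the identical Lyapunov function with $\upsilon=1+\frac{1-\gamma}{2}\sqrt{\mu s}$, the same parameter choices ($\lambda-(1+\gamma)\sqrt\mu=-\upsilon\gamma\sqrt\mu$ and $\delta=\upsilon(1+2\gamma\sqrt{\mu s})$, which is exactly what your cancellation condition gives), the same use of the strong quasar convexity inequality, Gronwall, and the same bound on $\mathcal E(0)$ via quadratic growth and $L$-smoothness leading to $K_0\leqslant 1+4+2=7$. The one caveat is that the ``bookkeeping'' you defer is not entirely routine: the residual coefficient of $F(X)-F^\ast$, namely $\frac{\lambda}{\sqrt\mu}(1+\gamma\sqrt{\mu s})-\frac{\upsilon}{2}(1+2\gamma\sqrt{\mu s})$, is not manifestly nonnegative, and the paper verifies it by reducing to $\frac12-\frac{1-\gamma}{2}\sqrt{\mu s}\bigl(\gamma(1+\gamma\sqrt{\mu s})+\frac12+\gamma\sqrt{\mu s}\bigr)>0$ and maximizing a cubic in $\gamma$ over $[0,1]$, so this sign check requires an explicit argument rather than inspection.
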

\begin{proof}
    
We rewrite the ODE (\ref{NAG-SQC-ODE1}) the following way.
\begin{equation}
    \upsilon \ddot{X}(t) + (1+\gamma)\sqrt{\mu}\dot{X}(t) + \sqrt{s}\nabla^2 F(X(t))\dot{X}(t) + (1 + \gamma\sqrt{\mu s})\nabla F(X(t)) = 0
\end{equation}
Where $\upsilon = 1 + \frac{1-\gamma}{2}\sqrt{\mu s}$.
Set the following Lyapunov function: 
    \begin{equation}
        \mathcal{E}(t) = \delta(F(X(t))-F^\ast) + \frac{1}{2}\lVert \upsilon\dot{X}(t) + \lambda(X(t) - x^\ast) + \sqrt{s}\nabla F(X(t)) \rVert^2
    \end{equation}
    To lighten the following computations, we write $X(t)$ as $X$, and we do the same for the first and second derivatives of $X$.
    We have 
    \begin{align}
        \dot{\mathcal{E}}(t) = \delta \langle \dot{X},\nabla F(X) \rangle  + \langle \upsilon \dot{X}(t) + \lambda(X(t) - x^\ast) + \sqrt{s}\nabla F(X(t)), \upsilon \ddot{X} + \lambda\dot{X} + \sqrt{s}\nabla^2 F(X)\dot{X}\rangle.
    \end{align}
     Injecting (\ref{NAG-SQC-ODE1}), we get 
        \begin{align}
            \dot{\mathcal{E}}(t) &= \delta\langle \dot{X},\nabla F(X) \rangle + \langle \upsilon \dot{X}(t) + \lambda(X(t) - x^\ast) \\
            &+\sqrt{s}\nabla F(X(t)),(\lambda - (1+\gamma)\sqrt{\mu})\sqrt{\mu}\dot{X} - (1 + \gamma\sqrt{\mu s})\nabla F(X)\rangle\\
            &=\delta \langle \dot{X},\nabla F(X) \rangle + \upsilon(\lambda - (1+\gamma)\sqrt{\mu}) (\lVert \dot{X} \rVert^2 \\
            &- \upsilon(1 + \gamma\sqrt{\mu s})\langle \dot{X},\nabla F(X)\rangle + \lambda (\lambda - (1+\gamma)\sqrt{\mu}) \langle X-x^\ast,\dot{X}\rangle\\
            &- \lambda(1 + \gamma\sqrt{\mu s})\langle X - x^\ast,\nabla F(X) \rangle + (\lambda - (1+\gamma)\sqrt{\mu})\sqrt{s}\langle \nabla F, \dot{X}\rangle - \sqrt{s}(1 + \gamma \sqrt{\mu s}) \lVert \nabla F(X) \rVert^2
        \end{align}
        We set $(\lambda - (1+\gamma)\sqrt{\mu}) = -\upsilon\gamma \sqrt{\mu} $. Then to cancel $\langle \dot{X},\nabla F(X) \rangle $ terms, we set 
        \begin{align}
            \delta = \left( \upsilon(1+\gamma\sqrt{\mu s}) +  \upsilon\gamma \sqrt{\mu s} \right) = \upsilon(1 + 2 \gamma \sqrt{\mu s})
        \end{align}
        Then we get
        \begin{align}
            \dot{\mathcal{E}}(t) &\leqslant -\gamma\sqrt{\mu}\left(\upsilon^2 \lVert \dot{X} \rVert^2 + \lambda \upsilon\langle X-x^\ast,\dot{X} \rangle + \frac{\lambda}{\gamma \sqrt{\mu}}(1+\gamma \sqrt{\mu s})\langle X-x^\ast,\nabla F(X) \rangle  + s \lVert \nabla F(X) \rVert^2\right) \\
            &- \sqrt{s}\lVert \nabla F(X) \rVert^2 
        \end{align}
        We use strong quasar convexity.
        \begin{align}
            \dot{\mathcal{E}}(t) &\leqslant -\gamma\sqrt{\mu}\left(\upsilon^2 \lVert \dot{X} \rVert^2 + \lambda \upsilon\langle X-x^\ast,\dot{X} \rangle + \frac{\lambda}{\sqrt{\mu}}(1+\gamma \sqrt{\mu s})(F(X)-F^\ast)\right.\\
            & \left. + \frac{\lambda \sqrt{\mu}}{2} (1+\gamma \sqrt{\mu s})\lVert X-x^\ast \rVert^2+ s \lVert \nabla F(X) \rVert^2\right) - \sqrt{s}\lVert \nabla F(X) \rVert^2\\
            &= -\gamma\sqrt{\mu}\left(\frac{1}{2}\upsilon^2 \lVert \dot{X} \rVert^2 + \lambda \upsilon\langle X-x^\ast,\dot{X} \rangle + \frac{\lambda^2}{2} \lVert X-x^\ast \rVert^2 \right)\\
             & \left.  + \frac{\upsilon}{2}(1+2\gamma\sqrt{\mu s})(F(X)-F^\ast)+ s \lVert \nabla F(X) \rVert^2- \sqrt{s}\lVert \nabla F(X) \rVert^2\right)\\
             &-\gamma \sqrt{\mu} \left( \frac{\lambda \sqrt{\mu}}{2} (1+\gamma \sqrt{\mu s}) - \frac{\lambda^2}{2}\right)\lVert X-x^\ast \rVert^2 \\
             &- \gamma \sqrt{\mu}\left(\frac{\lambda}{\sqrt{\mu}}(1+\gamma \sqrt{\mu s}) - \frac{\upsilon}{2}(1+2\gamma\sqrt{\mu s})\right)(F(X)-F^\ast)
             -\gamma\frac{\sqrt{\mu }\upsilon^2}{2}\lVert \dot{X}\rVert^2
        \end{align}
        We have $\frac{1}{2}\upsilon^2 \lVert \dot{X} \rVert^2 + \lambda \upsilon\langle X-x^\ast,\dot{X} \rangle + \frac{\lambda^2}{2} \lVert X-x^\ast \rVert^2 = \lVert \upsilon \dot{X} + \lambda(X - x^\ast) \rVert^2$.
           Then, we use:
        \begin{align}
            &\frac{1}{2}\lVert \upsilon \dot{X} + \lambda(X - x^\ast) + \sqrt{s}\nabla F(X) \rVert^2 \leqslant \lVert \upsilon \dot{X} + \lambda(X - x^\ast) \rVert^2 + s \lVert \nabla F(X) \rVert^2\\
            \Rightarrow-&\frac{1}{4}\lVert  \upsilon\dot{X} + \lambda(X - x^\ast) + \sqrt{s}\nabla F(X) \rVert^2 \geqslant -\frac{1}{2}\lVert \upsilon \dot{X} + \lambda(X - x^\ast) \rVert^2 -\frac{s}{2}\lVert \nabla F(X) \rVert^2
        \end{align}
        This leads to:
        \begin{align}
            \dot{\mathcal{E}}(t) &\leqslant -\gamma \frac{\sqrt{\mu}}{2}\mathcal{E}(t) - \sqrt{s}\lVert \nabla F(X) \rVert^2 -\gamma \sqrt{\mu} \left( \frac{\lambda \sqrt{\mu}}{2} (1+\gamma \sqrt{\mu s}) - \frac{\lambda^2}{2}\right)\lVert X-x^\ast \rVert^2 \\
             &- \gamma \sqrt{\mu}\left(\frac{\lambda}{\sqrt{\mu}}(1+\gamma \sqrt{\mu s}) - \frac{\upsilon}{2}(1+2\gamma\sqrt{\mu s})\right)(F(X)-F^\ast)
             -\gamma\frac{\sqrt{\mu }\upsilon^2}{2}\lVert \dot{X}\rVert^2 \label{continuous kinetic energy}
        \end{align}
        We have to check that some terms are negatives. We have $\lambda = \sqrt{\mu}(1+\gamma(1-\upsilon))$, and
        \begin{align}
            \frac{\lambda \sqrt{\mu}}{2} (1+\gamma \sqrt{\mu s}) - \frac{\lambda^2}{2} = \frac{\lambda}{2}\left(\sqrt{\mu}(1+\gamma\sqrt{\mu s} )- \lambda
            \right) = \frac{\lambda \mu}{2}\left( \gamma\sqrt{\mu s} -\gamma(1-\upsilon) \right) = \frac{\lambda \gamma \mu \sqrt{s}}{2}\left(\frac{1-\gamma}{2}\right) \geqslant 0
        \end{align}
        and
        \begin{align}
            \frac{\lambda}{\sqrt{\mu}}(1+\gamma \sqrt{\mu s}) - \frac{\upsilon}{2}(1+2\gamma\sqrt{\mu s}) &= \left( 1 - \frac{\gamma(1-\gamma)}{2}\sqrt{\mu s} \right)(1+\gamma \sqrt{\mu s}) - \frac{1}{2}(1 + \frac{1-\gamma}{2}\sqrt{\mu s})(1+2\gamma \sqrt{\mu s})\\
            &= 1+\gamma\sqrt{\mu s} - \frac{1}{2}(1+2\gamma \sqrt{\mu s})  - \frac{\gamma(1-\gamma)}{2}\sqrt{\mu s}(1+\gamma \sqrt{\mu s})\\
            &- \frac{1}{2} \frac{1-\gamma}{2}\sqrt{\mu s}(1+2\gamma \sqrt{\mu s})\\
            &= \frac{1}{2} - \frac{1-\gamma}{2}\sqrt{\mu s}\left(\gamma(1+\gamma\sqrt{\mu s}) + \frac{1}{2} + \gamma \sqrt{\mu s} \right)
        \end{align}
        We want to be sure that this quantity is positive. To do so, we will maximize the right term with respect to $\gamma \in [0,1]$. First, note that supposing $s \leqslant \frac{1}{L}$, we have:
        \begin{equation}
            \frac{1-\gamma}{2}\sqrt{\mu s}\left(\gamma(1+\gamma\sqrt{\mu s}) + \frac{1}{2} + \gamma \sqrt{\mu s} \right) \leqslant \frac{1-\gamma}{2}\left(\gamma(1+\gamma) + \frac{1}{2} + \gamma \right) = \frac{1}{4}(1-\gamma)(1 + 4\gamma +2\gamma^2):= g(\gamma)
        \end{equation}
        We have 
        \begin{equation}
            4g(\gamma) = (1+3\gamma - 2\gamma^2 - 2\gamma^3)
        \end{equation}
        We now want to find critical points of $g$.
        \begin{equation}
            4g'(\gamma) = 3 - 4\gamma - 6\gamma^2
        \end{equation}
        We calculate the discriminant $\Delta = 4^2+4*6*3=88$, inducing that the roots of $g'$ are
        \begin{equation}
            x_1 = -\frac{4+2\sqrt{22}}{12}, \quad x_2 = \frac{-4+2\sqrt{22}}{12}
        \end{equation}
        We clearly have $x_1<0$. $x_2$ however belongs to $[0,1]$. We evaluate numerically $g(x_2) \approx 0.44 < \frac{1}{2}$. We conclude that for all $\gamma \in [0,1]$ we have 
        \begin{equation}
            \frac{1}{2} - \frac{1-\gamma}{2}\sqrt{\mu s}\left(\gamma(1+\gamma\sqrt{\mu s}) + \frac{1}{2} + \gamma \sqrt{\mu s} \right) > 0
        \end{equation}

         All the out of parenthesis terms are negative, so we conclude that:
        \begin{equation}
            \dot{\mathcal{E}}(t) \leqslant -\gamma\frac{\sqrt{\mu}}{2}\mathcal{E}(t) \Rightarrow \mathcal{E}(t) \leqslant \mathcal{E}(0)e^{-\gamma\frac{\sqrt{\mu}}{2}t}
        \end{equation}
        Supposing $t_0 = 0$
        \paragraph{Deducing rate on $F(X(t))-F^\ast$}
        Using initial conditions, we have
        \begin{align}
            \mathcal{E}(0) &= \delta (F(X_0)-F^\ast) + \frac{1}{2}\lVert \lambda (X_0-x^\ast) - \sqrt{s}\nabla F(X_0) \rVert^2 \\
            &\leqslant \delta (F(X_0)-F^\ast) + \lambda^2\lVert X_0-x^\ast \rVert^2 + s\lVert \nabla F(X_0) \rVert^2
            \\
            &\leqslant \left( \delta + \frac{2\lambda^2(2-\gamma) }{\gamma \mu}+2Ls \right)(F(X_0)-F^\ast)
        \end{align}
      Where the third inequality uses that $F$ is $\frac{\mu \gamma}{2-\gamma}$-quadratic growth (Propostion \ref{SQC implis PL & QG}) and that $F$ is $L$-Smooth.
        Then, we have
        \begin{equation}
            F(X(t))-F^\ast \leqslant \underbrace{\left( \gamma + \frac{2\lambda^2(2-\gamma ) }{ \mu \delta}+2\gamma Ls \right)}_{:= K_0(\gamma,\mu,L,s)}\frac{1}{\gamma}(F(X_0)-F^\ast)  e^{-\gamma\frac{\sqrt{\mu}}{2}t}
        \end{equation}
        We need to check that $K_0(\gamma,\mu,L,s)$ is uniformly bounded. Note first that as $0<s \leqslant \frac{1}{L}$, we have
        \begin{equation}
            sL \leqslant 1, \quad \mu s \leqslant 1
        \end{equation}
        We bound now $\upsilon,\delta,\lambda$, that we already fixed in the proof.
        \begin{align}
           1\leq\upsilon:= 1 + \frac{1-\gamma}{2}\sqrt{\mu s} \leqslant \frac{3}{2}         \end{align}
           \begin{align}
               1\leqslant \delta := \upsilon(1 + 2 \gamma \sqrt{\mu s})\leqslant \frac{9}{2}
           \end{align}
           \begin{equation}
               \sqrt{\mu}\left(1 - \frac{1}{8} \right)\leq\lambda:= \sqrt{\mu}\left(1+\frac{\gamma(\gamma-1)}{2}\sqrt{\mu s}\right) \leqslant \sqrt{\mu}
           \end{equation}
           In the last inequality, we used the well known fact that $0\leqslant p(1-p) \leqslant \frac{1}{4}$, for all $p \in [0,1]$.\\
           We thus can explicitly compute that
           \begin{equation}
               K_0(\gamma,\mu,L,s) \leqslant \left(1 + 4 + 2  \right) = 7.
           \end{equation}
        \end{proof}

\end{document}